\numberwithin{equation}{section}
\newtheorem{theorem}{Theorem}
\newtheorem{definition}[theorem]{Definition}
\newtheorem{remark}[theorem]{Remark}
\newtheorem{lemma}[theorem]{Lemma}
\newtheorem{proposition}[theorem]{Proposition}
\newcommand {\R}{} \def\R{\ensuremath{\mathbb{R}}}
\newcommand {\N}{} \def\N{\ensuremath{\mathbb{N}}}
\newcommand {\Z}{} \def\Z{\ensuremath{\mathbb{Z}}}
\def\FF{\mathcal{F}}
\def\BB{\mathcal{B}}
\def\P{\mathbb{P}}
\def\dist{\mathop{\rm dist}}
\def\LG{\textcolor{black}}
\def\TK{\textcolor{black}}
\newcommand{\be}{\begin{equation}}
\newcommand{\ee}{\end{equation}}
\begin{document}

\title{Random attraction  in the TASEP model\thanks{Research supported in part by research grants from the Israel Science Foundation and the US-Israel Binational Science Foundation.}}

\author{Lars Gr\"une and Thomas Kriecherbauer and Michael Margaliot\thanks{LG and TK are  with the
Mathematical Institute, University of Bayreuth, Germany.
MM is with the  School of Electrical Engineering and the Sagol School of Neuroscience, Tel Aviv
University, Israel. }}

\maketitle 

\begin{comment}
\address{Mathematical Institute, University of Bayreuth, Germany}
\email{lars.gruene@uni-bayreuth.de}

\author[]{Thomas Kriecherbauer}
\address{Mathematical Institute, University of Bayreuth, Germany}
\email{thomas.kriecherbauer@uni-bayreuth.de}

\author[]{Michael Margaliot}
\address{School of Electrical Engineering and the Sagol School of Neuroscience, Tel Aviv
University, Israel}
\email{michaelm@eng.tau.ac.il}
\end{comment}

\onehalfspace 

\begin{abstract} 
The totally asymmetric simple exclusion process (TASEP) is a basic model of statistical mechanics that has found numerous
  applications. We consider the case of TASEP with a finite chain where particles may enter from the left and leave to the right at prescribed rates. This model can be formulated as a Markov process with a finite number of states. Due to the irreducibility of the process it is well-known that the probability distribution on the states is globally attracted to a unique equilibrium distribution. We extend this result to the more detailed level of individual trajectories. To do so we
 formulate TASEP as a random dynamical system. Our main result is that the trajectories from all possible initial conditions contract to each other yielding the existence of a random attractor that consists of a single trajectory almost surely.
This implies that in the long run TASEP ``filters out'' any
perturbation  that changes the state
 of the particles along the chain.  
 \TK{
 In order to prove our main result we first establish that any random dynamical system on a finite state space possesses both a global random
pullback attractor and a global random forward attractor. This observation appears to be missing in the literature. We then provide sufficient and necessary conditions for these attractors to be singletons. Finally, we show that TASEP satisfies one of these conditions.
 }
%%%
\end{abstract}

\maketitle

\smallskip
\noindent \textbf{Keywords.}	
	Random dynamical systems; random attractor; ribosome flow model; mRNA translation; contraction; synchronization\TK{; grand coupling; graphical representation of a Markov process, mixing times}. 
 
\section{Introduction} 
%%%%%%%%%%%%%%%%%%%%%%%%%

The \emph{totally asymmetric simple exclusion process}~(TASEP) 
is a fundamental dynamical model  from nonequilibrium  statistical mechanics. 
It was first introduced as a 1D lattice model for
the motion of    ribosomes along the mRNA strand during
translation~\cite{MacDonald1968}.
TASEP describes particles stochastically
 hopping      along a one-directional 1D chain of sites, where each site can be either empty or contain a single particle. 
This simple exclusion principle 
  generates an indirect coupling between the particles, as a particle cannot hop to a site that is already occupied by another particle. TASEP is a generic tool that has been used to model and analyze numerous natural and artificial processes including vehicular traffic, the kinetics of molecular motors, and
ribosome flow along the mRNA during translation~\cite{TASEP_motors,TASEP_tutorial_2011,ScCN11}. Although one-dimensional, TASEP   exhibits phase transitions  between low-density, high-density, and maximum-current phases~\cite{Krug1991,solvers_guide}.

The simple exclusion principle allows to model and analyze the evolution of  particle ``traffic jams''. Indeed, if a particle remains in the same site for a long time then other particles will accumulate in the sites ``behind'' the occupied site. Traffic jams in the flow of ``biological machines''
like ribosomes and molecular motors, have important ramifications and are attracting considerable interest
(see e.g.~\cite{Ross5911,SIMMS20191679,CHOWDHURY2005318}).

The \emph{ribosome flow model}~(RFM)~\cite{reuveni} is the dynamic mean-field approximation of~TASEP. The RFM and its variants have been  used extensively to model mRNA translation of both isolated mRNA molecules~\cite{rfm_chap,RFM_EXTEN}, 
and networks of mRNAs~\cite{nani,Raveh2016}, 
as well as other important cellular processes like phosphorelay~\cite{EYAL_RFMD1}.

 For a chain of~$n$ sites the RFM can be written as set of~$n$ ODEs:
 \begin{align*}  
\dot x_1 &= \alpha(1-x_1)-h_1 x_1(1-x_2) ,\\
\dot x_2 &=  h_1 x_1(1-x_2)- h_2 x_2(1-x_3),\\
&\vdots\\
\dot x_n &=  h_{n-1} x_{n-1}(1-x_n)- \beta x_n, 
%%%
%%
\end{align*}
where~$x_i(t) \in [0,1]$  
  represents the normalized density of particles at site~$i$ along the chain, with~$x_i(t)=0$ [$x_i(t)=1$] representing that this site is almost surely
  %completely  
  empty [full] at time~$t$.
 The number $h_i>0$ is the transition rate from site~$i$ to site~$i+1$ and $\alpha$, $\beta >0$ denote the rates at which particles enter the chain from the left or exit to the right, respectively. 
	%and~$\lambda_i>0$ is the transition rate from site~$i$ to site~$i+1$. 
	To explain this model,
	consider the equation for~$\dot x_2$. This states that the change in the density at site~$2$
	is the flow from site~$1$ to site~$2$ minus the flow from site~$2$ to site~$3$. The latter is given by~$h_2 x_2(1-x_3)$
	%by~$\lambda_2 x_2(1-x_3)$ 
	i.e. it is proportional to the transition rate, the density at site~$2$ and the ``free space'' in site~$3$. 
	This is a ``soft'' version of   simple exclusion.  Just like TASEP, the RFM can be used to model and analyze the evolution of traffic jams. 

 The RFM has been analyzed
 using various tools from systems and control theory including the theory of cooperative dynamical systems~\cite{RFM_stability},
contraction theory~\cite{RFM_entrain},  continued fractions,
the spectral theory of
 tridiagonal matrices~\cite{RFM_MAX}, and more. 

Recall that a dynamical system
\be\label{eq:dynf}
\dot x=f(x),
\ee
with~$x(t) \in \R^n$, is called \emph{contractive} if
there exist a vector norm~$|\cdot|:\R^n\to \R_+$ and~$\eta >0$
such that for any two initial conditions~$a,b$ in the state-space, we have 
\[
|x(t ,a)-x(t ,b) | \leq \exp(-\eta t) |a-b| \text{ for all } t\geq 0 .
\]  
Here~$x(t,a)$ denotes
 the solution of~\eqref{eq:dynf} at time~$t$ for~$x(0)=a$. 
 Thus, any two solutions approach one another at an exponential rate and in particular 
the initial conditions are exponentially ``forgotten''.
It was shown in~\cite{RFM_entrain} that the~RFM is  an (almost) contractive system
(see also~\cite{3gen_cont_automatica}).

This statement also holds on a more detailed level when one considers the time-evolution of the probability distribution on the $2^n$ states of~TASEP. This time-evolution is governed by a linear differential equation that is called \emph{Kolmogorov's forward equation} or  \emph{master equation} in the theory of Markov processes. Since~TASEP defines an irreducible Markov process it is well-known that any initial distribution is contracted to a unique equilibrium distribution that only depends on the transition rates $\alpha$,$\beta$,$h_1,\ldots,h_{n-1}$. In the special 
case where all the internal transition rates $h_i$ are equal an extremely useful representation for the equilibrium measure was derived in~\cite{Derrida_etal_1993} using the matrix product ansatz, see also~\cite{solvers_guide},~\cite{Krug16}. 

%As a side remark we mention that there is also another celebrated result regarding~TASEP that concerns a closely related model of an infinite chain with step-like initial conditions and identical transition rates. It was shown in~\cite{Johansson2000} that the fluctuations of the particle current obey the Tracy-Widom limit law from random matrix theory, see e.g.~\cite{KrieKrug2010} for an elementary description of the result.

  Having observed that densities and probability distributions contract to an equilibrium it is natural to ask: 
do the trajectories of~TASEP also contract? Of course, 
as the individual trajectories do not converge to an equilibrium it only makes sense to ask whether there is contraction between different trajectories. 
Note that the mutual contraction of different trajectories is also called \emph{synchronization}, see e.g.,~\cite[Definition 3.1.1]{newman_2018} \TK{or \emph{coalescence},~\cite[Section 5.2]{LevP17}}. In order to investigate such a question one needs to have a model of TASEP that describes the random jumps of particles simultaneously for all $2^n$ states of the system. The usual Markov process associated with~TASEP does not provide such information as any realization $\omega$ of randomness defines only a single trajectory. 
\LG{In the literature on Markov processes, the concept of a grand coupling, see, e.g.,  \cite[Chapter 5]{LevP17}, provides this information. This concept, in turn, needs a so called graphical construction or graphical representation of the Markov process, a construction that goes back to \cite{Harr78} and is given for TASEP, e.g., in \cite[p.~215]{Ligg99}. Another} standard framework for investigating the question of synchronization is given by \emph{random dynamical systems}~(RDS). \LG{An RDS can be seen as a refinement of the grand coupling that also takes into account solutions starting arbitrary far in the past.} To the best of our knowledge such an~RDS for~TASEP is missing in the literature. \LG{In this paper,} we fill this gap for finite chains \LG{based on a graphical representation that \TK{extends the one presented}
%is similar to 
in \cite[p.~215]{Ligg99}.}

\LG{An important advantage of the RDS formulation is that it allows for the construction of {\em random attractors}. One of the main results in this paper, Theorem \ref{thm:singleton}, states that for general finite state RDS almost sure synchronization is equivalent to the existence of a global random attractor in both forward and pullback sense that almost surely consists of single trajectories, as well as to other equivalent conditions. As we will show in Lemma~\ref{lemma:fintime}, the RDS for TASEP satisfies one (and thus all) of these conditions. Hence it synchronizes almost surely and exhibits this particular form of a random attractor. 
\TK{
For the proof of Lemma~\ref{lemma:fintime} a rather crude lower bound on the synchronization time suffices. It should be noted that a number of techniques have been developed to obtain much better bounds in the context of analyzing mixing times for simple exclusion processes, including  TASEP, with open boundaries and constant jump rates in the interior, see \cite{Gantert_etal_2020} and 
the  references therein.
}
%An interesting auxiliary result in this context is that for finite state RDS a global random pullback attractor always exists and also attracts in the forward sense.
}
 
We would like to point out that \TK{almost sure synchronization} is not a 
consequence of the fact that the equilibrium of the master equation
 description of~TASEP, i.e., the stationary distribution, is a global attractor in the space of probability distributions. 
Indeed, in Remark~\ref{rem:rw3}~(ii) we present an example of a Markov process for which the stationary distribution is a global attractor \LG{(as we shown at the end of Section \ref{sec:distributions}), but for which the trajectories do not synchronize, because the random attractor does not consist of single trajectories.} \TK{In the converse direction, however, we prove with Theorem~\ref{thm:RDSdistrT} for an arbitrary RDS on a finite state space that almost sure synchronization implies that the induced evolution of the probability distributions is contractive. If, in addition, the RDS is constructed from a Markov process this yields the global attractivity of the stationary distribution which therefore must also be unique.} \LG{Thus, the property we establish for TASEP in this paper is strictly stronger than the existence of a globally attracting stationary distribution. Generally, many of the techniques and properties we apply in this paper can also be used in order to study the behavior of probability distributions of finite state Markov processes, see, e.g., the books by Ligget~\cite{Ligg99} and by Levin, Peres and Wilmer~\cite{LevP17}. However, we are not aware of results that allow to conclude the dynamic properties for TASEP in the RDS framework proved in this paper from properties of its probability distribution.}

%\TK{
%We now turn to the question why almost sure synchronization implies convergence of the probability distributions. One way to see this is based on the fact that the total-variation distance between two probability measures can be estimated using couplings, see e.g.,~\cite[Proposition 4.7]{LevP17}. An immediate consequence of this observation is the following useful bound that holds for the coupling used in the present paper, see e.g.,~\cite[Theorem 5.4]{LevP17}: Starting the Markov process (or a Markov chain) at different states $x$ and $y$ in state space the total-variance distance between the evolving probability distributions at time $t$ is less or equal to the probability that the corresponding trajectories have not synchronized at time $t$. Note that almost sure synchronization means that exactly these latter probabilites tend to $0$ as $t \to \infty$. Taking the maximum over all pairs $(x, y)$ of initial conditions (recall that we are only considering finite state systems in this paper) one may then conclude that for every initial distribution the measures evolving under the Markov process converge uniformly to the stationary measure as time tends to infinity, see e.g.,~\cite[Section 4.4]{LevP17}.
%}

\TK{
The relation between the evolution of distributions and synchronization described in the previous paragraph is very useful when studying mixing times for exclusion processes, see e.g.,~\cite[Chapter 23]{LevP17} and the recent work~\cite{Gantert_etal_2020} where a number of detailed results on mixing times for the simple exclusion process with open boundaries are presented. We illustrate some of the results in~\cite{Gantert_etal_2020}
and numerically validate  a conjecture of~\cite{Gantert_etal_2020} by Monte-Carlo simulations in Section~\ref{sec:NumSim}.
}

%Conversely, we show in Theorem \ref{thm:RDSdistr} that almost sure synchronization implies \TK{global} attractivity of the stationary distribution. 
%using the notion of a random attractor. 
%Since the~RFM
%is a mean-field approximation of~TASEP, this naturally raises the following question: do the trajectories of TASEP also contract? Of course, since TASEP is a stochastic system one needs to define contraction in  a suitable form. Here, we
%formulate TASEP as  a random dynamical system~(RDS) and then prove
 %that  trajectories of~TASEP indeed contract to one another using the notion of a random attractor. 

%\TK{After this excursion to general finite state Markov chains and processes let us return to TASEP}. 
It is known~\cite{entrain_master} that convergence to a unique probability distribution holds for the TASEP master equation also in the case where the transition rates are time-varying and jointly periodic. The same is true for the convergence of densities for the RFM~\cite{RFM_entrain}. These results provide a framework for studying entrainment to periodic excitations like the 24h solar day or the 
cell division process   at the genetic level. Of course, for both equations the limiting state is not stationary any more but depends periodically on time. In Section~\ref{sec:extensions} we describe briefly how our result may be extended to this more general situation.

%{\bf I was not sure what to say exactly of our analysis of TASEP in~\cite{entrain_master}  }

Our result for~TASEP that random dynamics leads to a synchronization of trajectories has been observed in a number of quite different settings and goes back at least to the work of Baxendale and Stroock~\cite[Proposition 4.1]{BaxendaleStroock1988}, see also the discussion in~\cite[Section 4.1]{newman_2018}. One such setting that bears some similarity with our case is the time-discrete dynamics induced by the composition of random maps on the circle. For example, Kleptsyn and  Nalskii~\cite{klep2004}
considered a finite set of  orientation-preserving
homeomorphisms $T_k$, $k=1,\dots,n$, of the circle,
and the dynamical system obtained by
 applying
a randomly and independently chosen~$T_i$ at each time step.
They showed that under certain assumptions on the semigroup generated by the~$T_i$s there exists 
a~$C^1$-open set   of random
dynamical systems for which 
the distance between
the iterates of different points tends to zero as the   number of iterations tends to infinity.

A general framework for such \emph{noise-induced synchronization} (or synchronization
by noise) was introduced by Newman~\cite{newman_2018}. He considered the composition of independent and identically distributed random maps or a memoryless stochastic flow on a compact metric  space~$X$. He derived conditions for 
  almost-sure mutual convergence of any given pair of trajectories (i.e., global synchronization), namely,
  synchronization occurs and is ``stable'' if and
only if the following properties hold: 
(i) there is a smallest non-empty
invariant set~$K\subset X$; 
(ii) any two points in~$K$ are capable of being moved closer together;
and (iii) $K$ admits asymptotically stable trajectories.
In our case (i) is satisfied with $K=X$, condition (ii) is verified by Lemma~\ref{lemma:fintime}, and (iii) is trivially satisfied as our system has a finite number of states. Therefore, \LG{almost sure synchronization} also follows from~\cite[Theorem 4.2.1]{newman_2018}. However, as \cite{newman_2018} uses the more detailed version of a filtered~RDS we present a self-contained (and short) \LG{analysis of this property for finite state RDS} without reference to~\cite[Theorem 4.2.1]{newman_2018}. \LG{This allows us to provide the equivalent characterizations of the synchronization property in Theorem \ref{thm:singleton} already mentioned above, which are} not addressed in~\cite{newman_2018}. 
%, \LG{most notably the fact that almost sure synchronization is equivalent to the existence of forward and pullback random attractors that almost surely consist of single trajectories and thus uniquely characterize} the long-time behavior of the corresponding RDS.
%Our results are also   related to the interesting notion of \emph{noise-induced synchronization} (or synchronization by noise). Newman~\cite{newman_2018} 

Finally, we would like to mention a set of stochastic differential equations that is not directly related to our results but belongs to the same theme park. These equations consist of a diffusive or contractive deterministic evolution equation that is subjected to stochastic driving. In this realm the question of synchronization has been addressed
for It\^{o} stochastic differential equations with a contractive deterministic part in~\cite{slotine_stoc_cont}. There two copies of the same deterministic system with different initial conditions and driven by distinct and independent Wiener processes were considered, and a bound for the mean square distance between the
 solutions was derived. A celebrated representative of
 stochastically driven diffusive systems   is the Kardar-Parisi-Zhang~(KPZ) equation of statistical physics. It is conceptually interesting that this equation can be derived \TK{from the simple exclusion process under a specific weakly asymmetric scaling, see e.g.~the survey~\cite{Corwin_2012}. In fact, it was the discovery of a connection between~TASEP and random matrix theory in~\cite{Johansson2000} that showed that~TASEP belongs to the~KPZ universality class and which invigorated interest in the~KPZ equation (see e.g.,~\cite{Corwin_2012, Quastel_Spohn2015} for recent reviews on these developments).} 
%Several scholars studied contraction in various forms of stochastic systems. Ref.~\cite{slotine_stoc_cont} considered

The remainder of this paper is organized as follows. The next section reviews the notion of an~RDS. Section~\ref{sec:TASEP} carefully formulates TASEP as an RDS. \LG{Section~\ref{sec:attractors} shows that for a  general finite state RDS global forward and pullback random attractors exist and coincide, and that they  consist of single trajectories almost surely if and only if the RDS is synchronizing almost surely. These results are then applied to TASEP in Section~\ref{sec:TASEPattractors}. Section~\ref{sec:distributions} clarifies the relation between almost sure synchronization and the convergence of the probability distributions.}
%shows that 
% solutions of TASEP
% emanating from  different initial conditions coincide with arbitrarily large probability after sufficiently long time. Section~\ref{sec:ran_attra}
%  recalls two concepts of random attractors for an~RDS and   shows for our~TASEP-RDS that both types of attractors exist and consist of single trajectories almost surely.
  %shows  that  the long time behavior of TASEP is a.s.\ uniquely determined by the jump time sequence and does not depend on the initial condition.
Our theoretical results are  demonstrated using numerical simulations in Section~\ref{sec:NumSim}.
\TK{In this section we also present Monte-Carlo simulations that are related to some of the results
and to a conjecture posed  
in \cite{Gantert_etal_2020}. Some of the data obtained by these numerical experiments are deferred to the Appendix.}
Extensions of our main results to time-periodic transition rates and 
to the asymmetric simple exclusion process~(ASEP) are described in Section~\ref{sec:extensions}.
The final section summarizes our findings.
%concludes and describes several topics for further research. 

%\begin{itemize}
%\item introduce TASEP and explain its features 
%\item say something about the applications of TASEP (Michael)
%\item say something about the analysis of TASEP (Thomas)
%\item say something about the dynamics of the Master equation for TASEP and the RFM and how this motivates the study in this paper
%\item Google ``contraction in random systems'' and see what this gives
%\item it is natural to speculate that contraction will take longer on longer TASEPs. Any suggestions on how to add simulations that verify this?
%\item I think the paper is very strong and we need to consider a good journal for this.
%May be  SIAM JOURNAL ON APPLIED DYANMICAL SYSTEMS? I submitted something there and the review process was long but easy. They do insist sometimes on demonstrating the applications of the developed theory.
%\end{itemize}

\section{Random dynamical systems}
%%%%%%%%%%%%%%%%%%%%%%%%%%%%%%%%%%%%%%

In order to   analyze  random attraction in TASEP, we make use of the framework of  random dynamical 
systems~(RDS). The definition of an RDS goes back at least as far as \cite{ArnC91}, a comprehensive treatment can be found in the monograph \cite{Arno98}. 
In the literature, one can find several variants of this definition, which differ in minor technical details. We use 
the definition  from~\cite{CraK15}:
%%%%%%%%%%%%%%%%%%%%%%%%%%%%%%%%%%%
\begin{definition}%\label{Tdef:rds} %Let $(\Omega,\FF,\P)$ be a probability space and let $X$ be a topological space. 
A continuous time {\em random dynamical system} (RDS) $(\theta,\varphi)$ on a topological space $X$, equipped with the corresponding Borel sigma algebra, consists of 
\begin{itemize}

\item an autonomous measurable and measure-preserving dynamical system $\theta=\{\theta_t\}_{t\in\R}$ acting on a probability space $(\Omega,\FF,\P)$, i.e.,
\begin{eqnarray*} && \mbox{\rm (i) \; } \theta_0(\omega) = \omega, \qquad \mbox{\rm (ii) \; } \theta_{s+t}(\omega) = \theta_s\circ\theta_t(\omega)\\
&& \mbox{\rm (iii) \; } (t,\omega) \mapsto \theta_t(\omega) \mbox{ is measurable}
\end{eqnarray*}
for all $\omega\in\Omega$ and all $s,t\in\R$, such that $\theta_t\P = \P$ for every $t\in\R$, where $\theta_t\P(A)=\P(\theta_t A)$ for all $A\in\FF$\\

\item a cocycle mapping $\varphi:\R_0^+\times X\times\Omega \to X$, i.e.,
\begin{enumerate}
\item[(1)] $\varphi(0,x,\omega) = x$ for all $x\in X$ and $\omega\in\Omega$ \hfill (initial condition)
\item[(2)] $\varphi(s+t,x,\omega) = \varphi(s,\varphi(t,x,\omega),\theta_t\omega)$ for all $s,t,\in\R_0^+$, $x\in X$ and $\omega\in\Omega$\\ \phantom{.} \hfill (cocycle property)
\item[(3)] $(t,x,\omega) \to \varphi(t,x,\omega)$ is measurable \hfill (measurability)
\item[(4)] $x\mapsto \varphi(t,x,\omega)$ is continuous for all $(t,\omega)\in\R_0^+\times \Omega$ \hfill (continuity)
\end{enumerate}
\end{itemize}
\label{def:rds}\end{definition}

The interpretation of the times in this definition is as follows: $\varphi(\cdot,x_0,\theta_{t_0}\omega)$ denotes the solution path corresponding to $\omega$ starting at time $t_0$ in state $x_0$ and $\varphi(t,x_0,\theta_{t_0}\omega)$ denotes the state on this path at time $t+t_0$. This means that the first time argument of $\varphi$ indicates the time that elapsed since the initial time $t_0$, rather than the absolute time. \LG{For finite state Markov processes, the concept of an RDS can be seen as a refinement of the grand coupling as described, e.g., in \cite[Chapter 5]{LevP17}. The main difference is that in the RDS we consider solutions \TK{with starting times} on the whole real axis and that the driving system $\theta$ is also defined in backwards time. This property is crucial for the construction of random attractors that we will investigate in Section \ref{sec:attractors}. However, it also means that the construction of an RDS is more involved than the definition of a grand coupling. For TASEP, this construction is carried out in the next section.}

\section{TASEP as a random dynamical system}\label{sec:TASEP}

The \emph{totally asymmetric simple exclusion process}~(TASEP)
 is a Markov process for particles hopping or jumping along a 1D chain. We consider the continuous time version of TASEP here. Moreover, we restrict ourselves to finite lattices with $n\in\N$ sites. Then the Markov process has only a finite number of states. A particle at site $k\in\{1,\ldots,n-1\}$ hops
to site $k+1$ (the next site on the right)
  at a random jump time that is exponentially distributed with rate $h_k$, provided that site $k+1$ is not occupied by another particle. 
	This simple exclusion property generates an indirect coupling between the particles and allows, e.g.,
	to model the formation of traffic jams. Indeed, 
	if a particle ``gets stuck'' for a long time in the same site then other particles accumulate behind it. 
At the left end of the chain  particles enter with a certain entry rate~$\alpha>0$ and at the right end particles leave with a rate~$\beta>0$. We refer to \TK{\cite{Ligg99}}, \cite{ScCN11} and the references therein for more information about this model.

In the following subsections we describe how TASEP can be written as an~RDS.

%As pointed out in the introduction, TASEP has become a standard model for modeling ribosome  flow during translation, and is a paradigmatic model for the statistical mechanics of nonequilibrium systems. 

\subsection*{The dynamics of hopping in TASEP}
%%%%%%%%%%%%%%%%%%%%%%%%%%%%%%%%%%%%%%%%%%%%%%%%%%%%%%%%%%%%%%%%%%
In order to define the state of the system we 
associate to each site $k\in\{1,\ldots,n\}$ a
 variable~$s_k$. 
We set $s_{k}=1$ if   site $k$ is occupied by a particle and $s_k=0$ if it is not. Hence, the (finite) state space of TASEP is $X=\{0,1\}^n$. Since the state space is finite, we use the discrete topology and its Borel sigma algebra, i.e.~all subsets of $X$ are open and measurable.
%we may use the distance defined by $d(x_1,x_2)=1$ if $x_1\ne x_2$ and $d(x_1,x_2)=0$ if $x_1= x_2$, and the sigma algebra consisting of all subsets of $X$. 

For defining the dynamics of TASEP, we start with formalizing a single hop via a map $f$ and then a sequence of hops via a map $
\tilde\varphi$. The map $\varphi$ needed in Definition~\ref{def:rds} will then be derived from $\tilde\varphi$ in the last subsection of this paragraph, when the stochastic model defining the jump times has been introduced. \LG{This procedure is similar to the construction of the graphical representation for TASEP sketched on \cite[p.~\TK{215}]{Ligg99}, except that here we also define solutions starting arbitrarily far in the past. This requires a much more involved  construction of the 
Poisson process driving the dynamics, which is carried out in the following subsections.}

A single hop can be defined as follows. We are given a state $x = (s_1,\ldots,s_n)\in X$ and an index $k\in\{0,\ldots,n\}$ of the site at which the particle attempts to hop, where $k=0$ represents a particle entering the chain. Then we define 
%\[ \tilde x = (\tilde s_0,\ldots,\tilde s_{n+1}) := (1,s_1,\ldots,s_n,0) \in \{0,1\}^{n+2} \]
%and 
\[ f(x,k) := \left\{ \begin{array}{ll} 
(1,s_2,\ldots,s_n) & \mbox{ if } k=0 \\
(s_1,\ldots,s_{n-1},0) & \mbox{ if } k=n \\
(s_1,\ldots,s_{k-1},0,1,s_{k+2},\ldots, s_{n}) & \mbox{ if } k\ne 0, k\ne n, s_k=1 \mbox{ and } s_{k+1}=0\\
x & \text{ otherwise.} 
\end{array}\right.\]
Now assume that we have a sequence of jump times\footnote{Actually, ``jump attempt times'' would be the more accurate name, but as it is also more clumsy so we prefer the shorter ``jump times''.} $(t_i(\omega))_{i\in\Z}$ with $t_i(\omega)\in\R$ and $t_{i}(\omega) < t_{i+1}(\omega)$ for all $i\in\Z$ together with indices $k_i(\omega)\in \{0,\ldots,n\}$ indicating at which site a particle attempts to jump at time $t_i(\omega)$. The argument $\omega$ indicates that these (deterministic) sequences are realizations of random sequences $(t_i)$ and $(k_i)$. We will specify below how we generate these random sequences in order to meet the exponential distribution requirement. The transition $\tilde\varphi:\R^+_0\times X\times\R^\Z\times\{0,\ldots,n\}^\Z\to~X$ mapping the initial value $x_0$ at initial time $0$ to the state $\tilde\varphi(t,x_0,(t_i(\omega)),(k_i(\omega)))$ at time $t$, given the jump time and index sequences $(t_i(\omega))_{i\in\Z}$ and $(k_i(\omega))_{i\in\Z}$ is then defined by $\tilde\varphi(t,x_0,(t_i(\omega)),(k_i(\omega))):=x_0$ if $t_i(\omega) \not\in [0, t)$ for all $i \in \Z$, otherwise inductively via 
\begin{equation} x_{p+1} := f(x_p,k_{p+i_0}) \mbox{ for } p=0,\ldots,\Delta i, \qquad \tilde\varphi(t,x_0,(t_i(\omega)),(k_i(\omega))):= x_{\Delta i+1} \label{eq:tildephi}\end{equation}
where $i_0:=\inf\{ i\in\Z\,|\, t_i(\omega) \ge 0\}$, $i_1:=\sup\{ i\in\Z\,|\, t_i(\omega) < t\}$ and $\Delta i: =i_1-i_0$. Here we assume that $(t_i(\omega))$ has no accumulation points in $\R$, which can be done since our specification of $t_i$, below, will ensure that this 
indeed holds. 
%almost surely. 

\subsection*{Assignment of the jump times}

When trying to write TASEP as an RDS, it is not enough to define the ``hopping dynamics''. In addition, we face the difficulties described in \cite[top of p.~55]{Arno98}: a model that is described in terms of transition probabilities does not define a unique RDS because it only describes the evolution of single or one-point motions. For defining an RDS, however, we need to specify the simultaneous motion of solutions subject to different initial conditions but identical random influence. \LG{This corresponds to the concept of coupling and graphical representation in the Markovian literature, cf.\ the discussion in the introduction. Such a representation is in general not unique. We discuss two possible variants in the remainder of this subsection, from which we use the graphical representation described in \cite[p.~215]{Ligg99} for our analysis in this paper. Another source of non-uniqueness will be discussed at the end of this section.} 

As the random influence in TASEP is entirely determined by the jump times, \LG{the construction of a graphical representation} requires to specify the relation between the elementary events $\omega\in\Omega$ and the random jump times. 
\LG{In TASEP,} the rule specified for the jump times is that once a particle jumped or attempted to jump, the time to the next jump attempt is exponentially distributed. This \LG{can be achieved by assigning a sequence of random jump times $\tau_{p,l}$ (that we may think of as ``random clocks'')} to each particle $p$, such that the increments $w_{p,l}:= \tau_{p,l+1}-\tau_{p,l}$ are independent and exponentially distributed as well as independent of $\tau_{p,l}$ and of all jump times for all the other particles. 

Attaching the random clocks to the particles, however, has the disadvantage that
one needs to keep track at which site the particle is, as the expected rate $h_k$ at which the clock goes off depends on the location $k$ of the particle. 
%A brief discussion of such models is presented at the end of the paper in Section \ref{sec:extensions}. For the main body of our work, however, 
We therefore \LG{follow \cite[p.~215]{Ligg99} and assign} the random clocks to the sites $k$. That is, we
model the jumps using sequences of jump times $T_{k,j}$ with exponentially distributed and independent increments $W_{k,j} := T_{k,j+1}-T_{k,j}$, such that $W_{k,j}$ is independent of $T_{k,j}$ and of all jump times associated with the other sites. Besides being more convenient for our subsequent analysis, this definition is also quite natural in view of the fact that the jump rates $\alpha$, $\beta$ and $h_k$ in the model are site-dependent and not particle-dependent. 

%For our subsequent considerations it is, however, much more convenient to assign such sequences to the sites $k$ rather than to the particles $p$. That is, we prefer to model the jumps using sequences of jump times $T_{k,j}$ with exponentially distributed and independent increments $W_{k,j} := T_{k,j+1}-T_{k,j}$, such that $W_{k,j}$ is independent of $T_{k,j}$. Besides being more convenient for our subsequent analysis, this definition is also in agreement with the fact that the jump rates $\alpha$, $\beta$ and $h_k$ in the model are site-dependent and not particle-dependent. 

\LG{We would like to point out that} it does not matter for the transition probabilities whether we attach the random clocks to the particles or to the sites. Attaching the jump times to the sites is still consistent with the requirement that the difference between any two consecutive jump times is exponentially distributed, even if the corresponding particle hops, i.e., when it changes its site. This is due to the memorylessness of the exponential distribution and the independence assumption: if a particle attempts to jump at time $T=T_{k,l}$ but cannot jump, then the next jump time is $T_{k,l+1}$, whose difference to $T=T_{k,j}$ is exponentially distributed, because by independence for each $t\ge 0$ we have
\[ P(T_{k,j+1}-T \ge t \,|\, T_{k,l}=T) = P(W_{k,j} \ge t \,|\, T_{k,j}=T) = P(W_{k,j}\ge t).\]
If the jump at time $T=T_{k,j}$ is successful, then the next jump time for the particle is $T_{k+1,m+1}$, where $m$ is such that $T_{k+1,m}\le T$ and $T_{k+1,m+1}> T$. In this case, we can exploit the memorylessness, which says that for all $t>s\ge 0$ the identity 
\[ P(W_{k+1,m} \ge t + s \,|\, W_{k+1,m} > s) = P(W_{k+1,m} \ge t)\]
holds. Together with the fact that $T_{k+1,m}\le T$ and $T_{k+1,m+1}>T$ is equivalent to $T_{k+1,m}\le T$ and $W_{k+1,m} > s$ for $s:=T-T_{k+1,m}\ge 0$ and independence of $W_{k+1,m}$ and $T_{k+1,m}$ we obtain
\begin{eqnarray*} && P(T_{k+1,m+1} - T \ge t \,|\, T_{k+1,m}\le T, T_{k+1,m+1}>T )\\
&& \;\; = \;\; P(W_{k+1,m} \ge t + s\,|\, T_{k+1,m}\le T, T_{k+1,m+1}>T )\\
&& \;\; = \;\; P(W_{k+1,m} \ge t + s\,|\, T_{k+1,m}\le T, W_{k+1,m} > s )\\
&& \;\; = \;\; P(W_{k+1,m} \ge t + s\,|\, W_{k+1,m} > s )\\
&&\;\; =\;\;  
 P(W_{k+1,m} \ge t).
\end{eqnarray*}
Hence, at time $T$ the next jump time $T_{k+1,m+1}$ is again exponentially distributed.

In the sequel, we thus 
attach the jump time sequences to the sites. 

\subsection*{Generation of the jump times}
%%%%%%%%%%%%%%%%%%%%%%%%%%%%%%%%%%%%%%%%%%%%%%%%%%%%%%%%%%%%%%%%%%%%%%%%%%%
We now give a precise stochastic definition of the random jump time sequences~$(T_{k,j})_{j\in\Z}$.
From the requirement that the increments $T_{k,j+1}-T_{k,j}$ are exponentially distributed and stochastically independent of $T_{k,j}$ it follows that the $(T_{k,j})_j$ can be modeled by a Poisson process. Usually it is of no interest  on which underlying probability space this process is constructed and it is enough to know that it exists. However, for an RDS one is required by Definition \ref{def:rds} to provide a measure-preserving dynamical system $\{\theta_t\}_{t \in \mathbb R}$ acting on the probability space. The role of $\theta_t$ for $t\in \mathbb R$ is revealed by the cocycle 
property~(2) in  Definition~\ref{def:rds}: Associated with $\omega \in \Omega$ and $k \in \{0, \ldots, n\}$ is the sequence of jump times $(T_{k,j}(\omega))_j$ corresponding to the site $k$.
%$k=0,\ldots,n$, where $k=0$ corresponds to the particles entering the first site. 
Then the sequence $(T_{k,j}(\omega)-t)_j$ should also be a realization of the Poisson process. Therefore, there should exist an $\omega' \in \Omega$ and a $\Delta j \in \Z$ with $T_{k,j}(\omega)-t=T_{k,j-\Delta j}(\omega')$ for all $j$. This~$\omega'$ is then denoted by~$\theta_t(\omega)$. Note that this construction can   work only
if the Poisson process is defined on the whole real line and not just on $[0, \infty)$, which suffices to construct~TASEP and is usually used there. 

%Note that usually in TASEP these jump times are only defined for positive times. However, for defining a random dynamical system we need to define $\omega$-dependent jump times on the whole real line $\R$. To this end, we can use that the positive jump times usually considered in TASEP can be defined via the discontinuities of a homogeneous Poisson process on $[0,\infty)$, see \cite[Theorem 2.1.6.]{Miko09}. We recall that a Poisson process is a time-dependent process that only takes integer values and thus exhibits discontinuities at (random) times. Following \cite{Miko09}, we call the times at which discontinuities occur {\em arrival times}; the jump times in TASEP can hence be defined via these arrival times.

We now provide an explicit construction of a generalized Poisson process on the whole real line that allows us to define $\theta_t$ in the sense just described.
For this purpose it is convenient to think of the Poisson process as a point process. We follow the wonderful book of Kingman \cite{Kingman}. From \cite[Sections 1.3 and 2.1]{Kingman} we extract the following definition.

%This Poisson process can abstractly be defined via Poisson random measures, see \cite[Definition 7.2.1]{Miko09}, which in our case leads to the following definition of a generalized Poisson process on $\R$ (cf.\ the discussion on p.228 of \cite{Miko09}).

\begin{definition}\label{def:Ppp}
Let $(\Omega,\mathcal F,\P)$ be a probability space, $|\cdot |$ the Lebesgue measure 
on~$\R$, $\BB(\R)$ the set of Borel sets $A\subset\R$, $\lambda>0$, and let~$\R^{\infty}$ denote the set of all countable subsets of~$\R$.
 A homogeneous  Poisson process on~$\R$ with rate~$\lambda$
 is a map~$\Pi : \Omega \to \R^{\infty}$ satisfying the following 
 three conditions:
\begin{itemize}
\item[(i)] The maps $N(A):\Omega \to \N \cup \{ \infty \}$, $\omega \mapsto \#  (\Pi(\omega) \cap A)$ are measurable for all $A \in \BB(\R)$, i.e.~for all $m \in \N \cup \{ \infty \}$ and all $A \in \BB(\R)$ we have that the set $\{ \omega \in \Omega \mid \Pi(\omega) \cap A$ contains exactly~$m$ points$\}$ 
belongs to the sigma algebra $\mathcal F$. 
\item[(ii)] For any pairwise disjoint sets $A_1,\ldots,A_p\in\BB(\R)$, $p\in\N$, the random variables $N(A_1),\ldots,N(A_p)$ are independent.
\item[(iii)] $N(A)$ is ${\rm Pois}(\lambda |A|)$-distributed for all $A\in\BB(\R)$.
\end{itemize}
\end{definition}

%\begin{definition}
%Let $(\Omega,\sigma,\P)$ be a probability space, $\mu$ the Lebesgue measure on $\R$, $\BB(\R)$ the Borel sets $E\subset\R$ and $\lambda>0$. 

%(i) A point measure on $\R$ is a map $m:\BB(\R)\to\N$ with $m(K)<\infty$ for all compact sets $K\in\BB(\R)$.

%(ii) The set of all point measures from (i) is denoted by $M$ and with $\MM$ we denote a sigma algebra such that $m\mapsto m(A)$ is measurable for all $A\in\BB(\R)$.

%(iii) A point process $N$ on $\R$ is a measurable map from $\Omega$ to $(M,\MM)$, i.e., a map that assigns to each set $A\in\BB(\R)$ a random number $N(A)$ with $N(A,\omega)\in\N$. The random number $N(A)$ is interpreted as a number of random points in $A$.

%(iv) We call a point process $N$ a generalized (homogeneous) Poisson process if $N(A)$ is ${\rm Pois}(\lambda\mu(A))$-distributed and for any $p$ disjoint sets $A_1,\ldots,A_p\in\BB(\R)$, $p\in\N$, the random variables $N(A_1),\ldots,N(A_p)$ are independent.
%\label{def:genpoisson}
%\end{definition}

Note that for a homogeneous Poisson process $\Pi$ on $\R$ with positive rate the set $\Pi(\omega) \in \R^{\infty}$ has almost surely no accumulation points in $\R$, because all $\omega$ for which $\Pi(\omega)$ has a finite accumulation point are contained in $\bigcup_{L \in \N} \{ \omega \mid N([-L, L]) = \infty \}$ which is a countable union of sets of zero measure due to condition (iii). In Section~2.5 of~\cite{Kingman} an explicit construction for a rather general class of Poisson processes is presented. For our purposes it is more convenient to proceed in a different way that is described in Section 4.1 of~\cite{Kingman}: Since $\Pi_{+} := \Pi \cap \R_{0}^+$ and $\Pi_{-} := \Pi \cap \R^-$ are independent homogeneous Poisson processes with rate $\lambda$ (cf. the Restriction Theorem~\cite[Section 2.2]{Kingman} and condition~(ii)) and $x \mapsto -x$ maps $\Pi_-$ to a homogeneous Poisson process on $\R^+$ with rate $\lambda$ (cf. the Mapping Theorem~\cite[Section 2.3]{Kingman}) the Interval Theorem \cite[Section 4.1]{Kingman}) allows us to construct $\Pi$ using partial sums of two independent iid sequences of exponentially distributed random variables. The observation made above about the almost sure absence of accumulation points allows us to consider divergent series only. We summarize these considerations in a precise way:

\begin{definition}\label{def:Pi}
Let $\lambda$ be positive and denote by $\mathcal W_0^{\lambda}:=(\Omega_0,\sigma_0, \nu_0^{\lambda})$ the probability space on $\Omega_0:=[0, \infty)$ equipped with the corresponding Borel sigma algebra $\sigma_0$ and the measure $\nu_0^{\lambda}(dx):=\lambda e^{-\lambda x} dx$. For $i \in \Z \setminus \{ 0 \}$ we denote by $\mathcal W_i^{\lambda}$ identical probability spaces that are obtained from $\mathcal W_0^{\lambda}$ by restricting $\Omega_0$ to the open interval $(0, \infty)$. We set $\mathcal W^{\lambda} = (\hat{\Omega},\hat{\FF},\P^{\lambda})$ to be the countable product of these spaces
$$ \mathcal W^{\lambda} := \bigotimes_{i=1}^{\infty} \mathcal W_{-i}^{\lambda} \times \mathcal W_0^{\lambda} \times  \bigotimes_{i=1}^{\infty} \mathcal W_{i}^{\lambda}. $$
Any $\omega \in \hat{\Omega}$ is therefore given by a sequence $(\xi_i)_{i \in \Z}$ with $\xi_0 \geq 0$ and $\xi_i > 0$ for all $i \in \Z \setminus \{ 0 \}$. Then the map
$\Pi : \hat{\Omega} \to \R^{\infty}$ given by
$$\Pi(\omega) \equiv \Pi \left( \,(\xi_i)_{i} \,\right) := \{Y_l \mid l \in \Z \} \quad \text{ with } \quad
Y_l := \Phi(\omega)_l := \begin{cases} \sum_{i=0}^{l} \xi_i & \text{ for } l \geq 0\\ - \sum_{i=1}^{-l} \xi_{-i} &\text{ for } l < 0\end{cases}$$
defines a homogeneous Poisson process on $\R$ with rate $\lambda$. Finally, we modify the probability space $\mathcal W^{\lambda}$ and all related quantities by restricting $\hat{\Omega}$ to those $\omega\equiv (\xi_i)_i$ for which  $\sum_{i=0}^{\infty} \xi_i  = \infty$ and $\sum_{i=1}^{\infty} \xi_{-i} = \infty$. As argued above this removes a set of measure $0$. Thus the modified version also defines a
homogeneous Poisson process on $\R$ with rate $\lambda$ and we transfer our notation $\mathcal W^{\lambda}$, $\hat{\Omega}$, $\hat{\FF}$, $\P^{\lambda}$, $\Pi$ to the modified version.
\end{definition}

There is a somewhat confusing aspect about this construction. The distance between neighboring points $Y_l-Y_{l-1}$ is exponentially distributed except for $l=0$, because  $Y_0-Y_{-1}=\xi_{-1}+\xi_0$ is the sum of two independent exponentially distributed random variables and is therefore not distributed exponentially. However, this does not contradict the fact that for any time $t\in\R$ the time until the next jump attempt is exponentially distributed. The reason for this is the waiting time paradox,
 and we refer the reader to the end of 
Section~4.1 in~\cite{Kingman} for an explanation.

We now study the question raised at the beginning of this subsection, i.e.~to identify the map $\hat{\theta}_t$ that is induced on $\hat{\Omega}$ by shifting the origin of the real axis to $t$. First we notice that the map $\Phi: \omega \mapsto (Y_l)_{l \in \Z}$ that is implicit in Definition \ref{def:Pi} is a bijection between $\hat{\Omega}$ and the set of all strictly increasing sequences $(a_l)_{l \in \Z}$ with $a_{-1} < 0 \leq a_0$ and $\lim_{l\to\pm\infty} a_l = \pm\infty$. Note that the last property follows from the modification performed at the end of Definition \ref{def:Pi}. 
%Now assume in addition that the sequence $(Y_l)_{l }$ is unbounded above and below, i.e.~$\lim_{l\to\pm\infty} Y_l = \pm\infty$, 
Now fix $t \in \R$. 
The translated sequence $(Z_l)_{l \in \Z}$ defined by $Z_l:=Y_l-t$ is again a strictly monotone sequence that is unbounded from above and below and we can therefore find some $l_0 \in \Z$ with $Z_{l_0-1} < 0 \leq Z_{l_0}$. Consequently, the shifted sequence
$(b_l)_{l \in \Z}$ with $b_l:=Z_{l+l_0}$ lies in the range of the map $\Phi$. Defining $\hat{\theta}_t(\omega):=\Phi^{-1}((b_l)_l)$ we have that the set $\Pi(\omega)$ shifted by $-t$ equals $\Pi(\hat{\theta}_t(\omega))$ as desired. 
%Note that so far we have defined $\hat{\theta}_t(\omega)$ only for those $\omega \in \hat{\Omega}$ with $\Pi(\omega)$ unbounded above and below, i.e.~for
%$$ \omega \in \hat{\Omega}^{\ast}:=\{ \omega = (\xi_i)_{i \in \Z} \in \hat{\Omega} \mid  \sum_{i=0}^{\infty} \xi_i  < \infty \; \text{ or } \; \sum_{i=1}^{\infty} \xi_{-i} < \infty \}.
%$$
%This set has full measure since $\hat{\Omega} \setminus \hat{\Omega}^{\ast} \subset \bigcup_{L \in \N} \{ \omega \mid N([-L, L]) = \infty \}$ and consequently contained in a countable union of sets of zero measure. It is therefore irrelevant how we define $\hat{\theta}_t$ on $\hat{\Omega} \setminus \hat{\Omega}^{\ast}$ and we choose $\hat{\theta}_t(\omega):=\omega$ for $\omega \in \hat{\Omega} \setminus \hat{\Omega}^{\ast}$. Observe that 
Moreover, the just defined family $\{ \hat{\theta}_t\}_{t \in \R}$ satisfies properties (i)-(iii) of Definition \ref{def:rds} by construction. In order to see that all $\hat{\theta}_t$ preserve the measure $\P^{\lambda}$ one may proceed as follows: By the Mapping Theorem \cite[Section 2.3]{Kingman} the shifted map $\Pi':=\Pi-\{t\}$ is again a homogeneous Poisson process with rate $\lambda$. As argued in the paragraph above Definition \ref{def:Pi} the corresponding random variables $(\xi'_i)_{i \in \Z}$ are again iid and exponentially $\lambda$-distributed by the Restriction Theorem and by the Interval Theorem. Hence the distribution of $(\xi'_i)_{i \in \Z}$ is again governed by $\P^{\lambda}$.

After all these preparatory discussions we are finally ready to define our RDS.

\subsection*{Definition of the TASEP random dynamical system}\label{sec:rds}

We begin with the probability space $(\Omega,\FF,\P)$. It is essentially given by the $(n+1)$-fold product (cf.~Definition~\ref{def:Pi})
$$ \mathcal W^{\alpha} \times  \bigotimes_{k=1}^{n-1} \mathcal W^{h_k} \times   W^{\beta} \,.$$
Any $\omega$ then corresponds to $n+1$ stochastically independent point processes that we may represent by strictly increasing sequences $(T_{k,j}(\omega))_{j\in \Z}$ that are unbounded above and below. Here $k=0,\ldots,n$ denotes the lattice site of the random clock where $k=0$ represents the clock for particles entering the first site. Since the exponential distribution is absolutely continuous it is not hard to see that the event that there exist $k\neq k'$, $j$, $j'$ with $T_{k,j}(\omega)=T_{k',j'}(\omega)$ has zero probability and we remove this event from our probability space. This completes the definition of $(\Omega,\FF,\P)$.   

By construction, all jump times $T_{k,j}(\omega)$ are pairwise distinct for all $\omega \in \Omega$. Therefore there exist unique sequences  
$k_i=k_i(\omega)$ and $j_i=j_i(\omega)$, $i \in \Z$, with 
\[ T_{k_i,j_i}(\omega) < T_{k_{i+1},j_{i+1}}(\omega)\,, i \in \Z\,, \quad \text{and} \quad T_{k_{-1},j_{-1}} < 0 \;\leq T_{k_{0},j_{0}} \,.\]
We call the random sequence $(k_i)_i$ the {\em jump order sequence} with corresponding {\em jump time sequence} $t_i := T_{k_i,j_i}$.

The dynamics $\theta_t$ is defined by 
$\theta_t(\omega) \equiv \theta_t(\omega_0, \ldots, \omega_n) := (\hat{\theta}_t(\omega_0), \ldots, \hat{\theta}_t(\omega_n))$. We noticed above that $\{ \hat{\theta}_t\}_t$ satisfies properties (i)-(iii) of Definition \ref{def:rds} and argued that $\hat{\theta}_t$ leaves the probability measure invariant. All this carries over to $\theta_t$ acting on the product space restricted to the event of pairwise distinct jump times. Moreover, it is clear from the construction that for every $\omega \in \Omega$ and $t \in \R$ there exists $\Delta i (\omega, t) \in \Z$ such that the jump time and jump order sequences satisfy
%\marginnote{\LG{Also\\ for all\\$i\in\Z$?}}
\begin{equation}\label{eq:actionoftheta}
t_{i-\Delta i (\omega, t)}(\theta_t(\omega)) = t_i (\omega)-t\,,\quad k_{i-\Delta i (\omega, t)}(\theta_t(\omega)) = k_i (\omega) \quad \text{for all} \; i\in\Z \,.
\end{equation}

Finally, we can define the cocycle mapping $\varphi$ using $\tilde\varphi$ of \eqref{eq:tildephi} and the just defined sequences of jump order $(k_i)_i$ and jump times  $(t_i)_i$:
\[ \varphi(t,x,\omega) := \tilde\varphi(t,x,(t_i(\omega)),(k_i(\omega)))\,. \]
Let us check the requirements of Definition \ref{def:rds}. There is nothing to show for condition (4), because the state space is discrete. Condition (3) follows from the construction and (1) holds because $\tilde\varphi$ leaves $x$ unchanged for $t=0$ (there is no jump time $t_i(\omega)$ in $[0, t) = \emptyset$). The cocycle property (2) is an immediate consequence of \eqref{eq:actionoftheta}.

\subsection*{Non-Uniqueness of the TASEP random dynamical system} 

The construction of the random dynamical system for TASEP we just presented appears to be the most reasonable one from the point of view of physical intuition. This is why we use it in the remainder of this paper. However,  we would like to point out that the RDS is not uniquely determined by the transition probabilities as we illustrate now. For instance, in TASEP with chain length $n=3$, when the clock for site $1$ rings at time $t$ (i.e., when $T_{1,j}(\omega)=t$ for some $j\in\Z$), then this will trigger the transitions
\[ 100 \to 010 \quad \mbox{and} \quad 101 \to 011 \]
while the clock at site $2$ will trigger the transitions
\[ 010 \to 001  \quad \mbox{and} \quad 110 \to 101 \]
(for all other states, nothing will happen). Now, if the rates~$h_1$ and~$h_2$ are equal, then one may also define a random dynamical system
 in which the ringing of clock number 1 triggers the transitions
\[ 100 \to 010 \quad \mbox{and} \quad 110 \to 101 \]
while clock number $2$ triggers the transitions
\[ 010 \to 001  \quad \mbox{and} \quad 101 \to 011. \]
Since the rates of the two clocks are the same, this will yield a model with exactly the same state transition statistics as the one constructed above, but the resulting random dynamical systems differ: for the redefined dynamical system 
there is a positive probability that both $\varphi(1,100,\omega)=010$ and $\varphi(1,110,\omega)=101$ hold true, while for the system defined in the previous section such an $\omega$ does not exist. The latter statement can be verified by checking that there are no two states $x_1$, $x_2$ such that $x_1$ goes to or remains at $010$ and simultaneously $x_2$ goes to or remains at $101$ by the ringing of any of the four clocks. 
%there exists an $\omega$ that at some time $t$ yields $\varphi(t,100,\omega)=010$ and $\varphi(t,110,\omega)=101$, while for the system defined in the previous section such an $\omega$ does not exist.  

\section{Random attractors for finite state random dynamical systems}\label{sec:attractors}

\LG{As we have seen, TASEP can be formulated 
 as an RDS with a finite state space. In this section we present results for attraction and in particular random attractors of general RDS with finite state space. While these results are interesting in their own right, in the subsequent section we will in particular use them for TASEP.}

\subsection{Random attractors}

We use the following definitions of random attractors in the pullback and in the forward sense. We refer to \cite{ChKS02,Sche02} for a study of the difference between pullback and forward attraction. Here we limit ourselves to the definition of {\em global} random attractors. 

\begin{definition} A {\em random set} $C$ on a probability space $(\Omega,\FF,\P)$ is a measurable subset of $X\times\Omega$ with respect to the product $\sigma$-algebra of the Borel $\sigma$-algebra of $X$ and $\FF$. The $\omega$-section of a random set $C$ is for each $\omega\in\Omega$ defined by
\[ C(\omega) = \left\{ x\in X\,|\, (x,\omega)\in C\right\}.\]
The random set is called compact if every $C(\omega)$ is compact.
\end{definition}

\begin{definition} Let $(\theta,\varphi)$ be an RDS on a Polish space $X$. A 
%\marginnote{what is a compact\\subset\\ of $X\times\Omega$?}
compact random set $A\subset X\times\Omega$ that is strictly $\varphi$-invariant, i.e., 
\[ \varphi(t,A(\omega),\omega) = A(\theta_t\omega) \mbox{ for every } t\in\R_0^+ \mbox{ a.s.}, \]
is called a {\em global random pullback attractor}, if 
\[ \lim_{t\to\infty} \dist\big(\varphi(t, X, \theta_{-t}\omega), A(\omega)\big) = 0 \mbox{ a.s.}. \]
It is called a \emph{global random forward attractor} if 
\[ \lim_{t\to\infty} \dist\big(\varphi(t, X, \omega), A(\theta_t\omega)\big) = 0 \mbox{ a.s.}. \]
Here $\dist(A,B) := \sup_{a\in A}\inf_{b\in B}d(a,b)$. 
\end{definition}

In our case the finite state space $X$ is equipped with the discrete topology and we may therefore use the distance defined by $d(x_1,x_2)=1$ if $x_1\ne x_2$ and $d(x_1,x_2)=0$ if $x_1= x_2$. This implies for subsets $A$, $B \subset X$ that $d(A,B)=0$ if $A\subset B$ and $d(A,B)=1$, otherwise.
For the construction of the attractor, we use that for each $s > 0$ the cocycle property implies
\begin{equation} \varphi(t+s,X,\theta_{-t-s}\omega) = \varphi(t,\varphi(s,X,\theta_{-t-s}\omega),\theta_{-t}\omega) \subseteq \varphi(t,X,\theta_{-t}\omega), \label{eq:cocyclsts}\end{equation}
so  the set $\varphi(t,X,\theta_{-t}\omega)$ is decreasing in $t$ w.r.t.\ set inclusion. Hence, we can define its set valued limit via
\begin{equation} A(\omega) := \bigcap_{t\ge 0} \varphi(t,X,\theta_{-t}\omega). \label{eq:Adef}\end{equation}

\LG{
\begin{theorem} Consider an RDS with  a  finite state space. Then $A(\omega)$ from \eqref{eq:Adef} is nonempty for each $\omega\in\Omega$ and defines both a global random pullback attractor and a global random forward attractor. Moreover, for each $\omega\in\Omega$ there exists $T(\omega)>0$ such that $\varphi(t,X,\theta_{-t}\omega) = A(\omega)$ for all $t\ge T(\omega)$ and for almost every $\omega\in\Omega$ there exists $\widehat T(\omega)>0$ such that $\varphi(t,X,\omega) = A(\theta_{t}\omega)$ for all $t\ge \widehat T(\omega)$
\label{thm:rattr}\end{theorem}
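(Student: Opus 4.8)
The plan is to exploit the finiteness of $X$ throughout, reducing almost everything to elementary monotonicity arguments, and to isolate a single genuinely probabilistic step for the forward attractor. First I would settle nonemptiness and the pullback statements. By \eqref{eq:cocyclsts} the family $t\mapsto\varphi(t,X,\theta_{-t}\omega)$ is decreasing with respect to set inclusion and takes values in the finite power set $2^X$; a decreasing chain in a finite lattice is eventually constant, so for each $\omega$ there is a time $T(\omega)$ with $\varphi(t,X,\theta_{-t}\omega)=A(\omega)$ for all $t\ge T(\omega)$, where $A(\omega)$ from \eqref{eq:Adef} is the minimal element of the chain and hence nonempty (each $\varphi(t,X,\theta_{-t}\omega)$ is the image of the nonempty set $X$). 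In the discrete metric $\dist(B,A(\omega))=0$ as soon as $B\subseteq A(\omega)$, so the pullback limit is not merely $0$ but attained for $t\ge T(\omega)$. Measurability of $A$ (so that it is a compact random set) follows by writing $A(\omega)=\bigcap_{q\in\Q_{\ge0}}\varphi(q,X,\theta_{-q}\omega)$ — the decreasing property lets one replace the uncountable intersection by a countable one — and noting that each $\omega\mapsto\varphi(q,y,\theta_{-q}\omega)$ is measurable by property (3) of Definition~\ref{def:rds} and measurability of $\theta_{-q}$, with only finitely many $y\in X$.

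Next I would prove strict invariance $\varphi(t,A(\omega),\omega)=A(\theta_t\omega)$ for every $\omega$ and $t\ge0$. Using $A(\omega)=\varphi(s,X,\theta_{-s}\omega)$ for $s\ge T(\omega)$ together with the cocycle property (2), one gets $\varphi(t,A(\omega),\omega)=\varphi(t+s,X,\theta_{-s}\omega)$; choosing $s$ so large that also $t+s\ge T(\theta_t\omega)$ and comparing with $A(\theta_t\omega)=\varphi(t+s,X,\theta_{-s}\omega)$ yields the claim. Two consequences I will use repeatedly are that $\varphi(t,\cdot,\omega)$ maps $A(\omega)$ onto $A(\theta_t\omega)$, so $|A(\theta_t\omega)|\le|A(\omega)|$, and that $A(\theta_t\omega)=\varphi(t,A(\omega),\omega)\subseteq\varphi(t,X,\omega)$.

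The forward attractor is the hard part, and I expect it to be the only real obstacle. Because of the inclusion $A(\theta_t\omega)\subseteq\varphi(t,X,\omega)$, in the discrete metric forward attraction is equivalent to eventual equality $\varphi(t,X,\omega)=A(\theta_t\omega)$. Since $\varphi(t+s,X,\omega)=\varphi(s,\varphi(t,X,\omega),\theta_t\omega)$ is an image of $\varphi(t,X,\omega)$, the cardinality $|\varphi(t,X,\omega)|$ is non-increasing in $t$ and therefore stabilizes at a finite time $\widehat T(\omega)$ to a limit $M(\omega)$. It thus remains to show $M(\omega)=N(\omega):=|A(\omega)|$ almost surely, for then equal cardinalities plus the inclusion $A(\theta_t\omega)\subseteq\varphi(t,X,\omega)$ force the two sets to coincide.

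To identify $M$ and $N$ I would first show that $s\mapsto N(\theta_s\omega)$ is non-increasing: from $|A(\theta_s\omega)|\le|A(\omega)|$ and the group property $\theta_{s_2}=\theta_{s_2-s_1}\circ\theta_{s_1}$. Since each $\theta_s$ preserves $\P$, the identity $\int N(\theta_s\omega)\,d\P=\int N\,d\P$ together with $N\circ\theta_s\le N$ forces $N\circ\theta_s=N$ a.s.\ for each $s$; monotonicity and density of $\Q$ then upgrade this to $N(\theta_s\omega)=N(\omega)$ for all $s\in\R$, a.s. Finally I would run the measure-preserving identity $\int|\varphi(t,X,\theta_{-t}\omega)|\,d\P=\int|\varphi(t,X,\omega)|\,d\P$, let $t\to\infty$, and apply dominated convergence (all quantities are bounded by $|X|$): the left-hand side tends to $\int N\,d\P$, since $|\varphi(t,X,\theta_{-t}\omega)|=N(\omega)$ for $t\ge T(\omega)$, while the right-hand side tends to $\int M\,d\P$. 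Hence $\int N\,d\P=\int M\,d\P$; combined with $M\ge N$ a.s.\ (from the inclusion and orbit-invariance of $N$) this gives $M=N$ a.s. For $t\ge\widehat T(\omega)$ one then has $|\varphi(t,X,\omega)|=M(\omega)=N(\omega)=N(\theta_t\omega)=|A(\theta_t\omega)|$ with $A(\theta_t\omega)\subseteq\varphi(t,X,\omega)$, whence equality and the forward statement. The whole difficulty is concentrated in this identification $M=N$; the rest is bookkeeping with finiteness and the cocycle property.
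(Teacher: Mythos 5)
Your treatment of nonemptiness, the pullback statement, and strict invariance coincides with the paper's: the chain $t\mapsto\varphi(t,X,\theta_{-t}\omega)$ is decreasing by \eqref{eq:cocyclsts} and takes values in a finite lattice, hence stabilizes at $A(\omega)$ for $t\ge T(\omega)$, and invariance follows by taking $s$ large enough for both $\omega$ and $\theta_t\omega$; your remark on measurability of $A$ via a countable intersection over rationals is a useful addition that the paper leaves implicit. For the forward attractor, however, you take a genuinely different route. The paper transfers pullback attraction to forward attraction directly: it chooses $T_p$ with $\P(\{T(\omega)\le T_p\})>p$, pushes this event back by $\theta_{-T_p}$ (which preserves $\P$), notes that on the shifted set $\varphi(T_p,X,\hat\omega)=A(\theta_{T_p}\hat\omega)$, and then uses invariance to propagate this equality to all $t\ge T_p$; letting $p\to 1$ gives the almost sure statement. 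You instead run a cardinality argument: $|\varphi(t,X,\omega)|$ decreases to a limit $M(\omega)$ in finite time, $N(\omega)=|A(\omega)|$ is sub-invariant along the flow and hence a.s.\ invariant because $\theta_s$ preserves $\P$, and the identity $\int|\varphi(t,X,\theta_{-t}\omega)|\,d\P=\int|\varphi(t,X,\omega)|\,d\P$ passed to the limit forces $\int M\,d\P=\int N\,d\P$, which together with $M\ge N$ a.s.\ gives $M=N$ a.s.; equal cardinality plus the inclusion $A(\theta_t\omega)\subseteq\varphi(t,X,\omega)$ then yields equality of the sets. Both proofs are correct and both ultimately rest on measure preservation of $\theta$. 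The paper's version is shorter and produces a deterministic attraction time $T_p$ valid on a set of measure exceeding $p$, whereas yours isolates the invariant $|A(\omega)|$ and shows directly that the forward image cannot remain strictly larger than the attractor on a set of positive measure --- a slightly heavier but self-contained argument that also establishes, as a byproduct, that $\omega\mapsto|A(\omega)|$ is almost surely constant along orbits.
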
  }
\begin{proof}
\LG{
For $t\ge 0$ and $\omega\in\Omega$ define $B_t(\omega): = \varphi(t,X,\theta_{-t}\omega)$. From \eqref{eq:cocyclsts} we obtain that for all $t,s\ge 0$, $x\in X$ and $\omega\in\Omega$
\begin{equation} B_{t+s}(\omega) =  \varphi(t,B_s(\theta_{-t}\omega),\theta_{-t}\omega) \subseteq B_t(\omega). \label{eq:Bts}\end{equation}
Since these sets are finite, this inclusion implies that $t\mapsto B_t(\omega)$ can change its value only finitely many times, implying that $A(\omega) = \bigcap_{t\ge 0} B_{t}(\omega)$ equals $B_t(\omega)$ for all sufficiently large $t\ge 0$ and is thus in particular nonempty. Let $T(\omega)>0$ be the infimal time for which $A(\omega) = B_t(\omega)$ holds for all $t\ge T(\omega)$. Then
\[\dist(\varphi(t,X,\theta_{-t}\omega), A(\omega)) = \dist(B_t(\omega), A(\omega)) = 0 \]
for all $t\ge T(\omega)$, i.e., finite time pullback attraction.
}

\LG{
Next we prove $\varphi$-invariance of $A$, i.e., $\varphi(t,A(\omega),\omega) = A(\theta_t\omega)$ for all $t\ge 0$ and all $\omega\in\Omega$. To this end, fix $t\ge 0$ and $\omega\in\Omega$ and choose $s\ge \max\{ T(\omega), T(\theta_t\omega)-t
\}$. Then we get $A(\omega)=B_s(\omega)$ and $A(\theta_t\omega) = B_{t+s}(\theta_t\omega)$. Together with the first identity in \eqref{eq:Bts}, applied with $\theta_t\omega$ in place of $\omega$, this yields
\[ \varphi(t,A(\omega),\omega) = \varphi(t,B_s(\omega),\omega) = B_{t+s}(\theta_t\omega)=A(\theta_t\omega).\]
Together, this shows that $A$ is a global random pullback attractor and that pullback attraction happens in finite time $T(\omega)$ for each $\omega\in\Omega$.
}

\LG{
In order to see that $A$ is also a global forward attractor, for every $T>0$ define the set $\Omega_T:= \{\omega\in\Omega\,|\, T(\omega) \le T\}$. Since $T(\omega)$ is finite for every $\omega$, for every $p\in(0,1)$ there exists $T_p>0$ such that $\P(\Omega_{T_p})>p$ holds. Now, consider the set $\widehat \Omega_{T_p} := \theta_{-T_p}\Omega_{T_p}$. Since $\theta_t$ is measure-preserving, we obtain that $\P(\widehat \Omega_{T_p})>p$. Now, each $\hat\omega\in\widehat \Omega_{T_p}$ is of the form $\hat\omega=\theta_{-T_p}\omega$ for some $\omega\in \Omega_{T_p}$, i.e., $\omega= \theta_{T_p}\hat\omega$. Hence, for each $\hat\omega\in\widehat\Omega_{T_p}$ we have
\[ \dist(\varphi(T_p,X,\hat\omega), A(\theta_{T_p}\hat\omega)) = \dist(\varphi(T_p,X,\theta_{-T_p}\omega),A(\omega)) = 0.\]
For all $t\ge T_p$ we obtain
\[ \varphi(t,X,\hat\omega) = \varphi(t-T_p, \varphi(T_p,X,\hat\omega), \theta_{T_p}\hat\omega) = \varphi(t-T_p, A(\theta_{T_p}\hat\omega), \theta_{T_p}\hat\omega) = A(\theta_{t}\hat\omega),\]
where we used $\varphi$-invariance of $A$ in the last step. This implies 
\[ \dist(\varphi(t,X,\hat\omega), A(\theta_{t}\hat\omega)) = 0 \]
for all $t\ge T_p$ and thus forward attraction in finite time $T_p$ with probability larger than $p$. Since $p\in(0,1)$ is arbitrary, this implies forward attraction to $A$ in finite time with arbitrarily large probability and thus almost sure forward attraction in finite time.}
\end{proof}

\LG{
Theorem \ref{thm:rattr}  shows in particular   that for an~RDS with a finite state space the concept of pullback and forward random attractors coincide. This is in contrast to the general case, where (simple) SDEs are known that exhibit invariant random sets that are pullback but not forward random attractors and vice versa, cf.\ \cite{Sche02}. The intuitive reason for this is that, on a finite state space, once a solution gets sufficiently close to an invariant set then it must already be inside the invariant set. Thus, it can never leave the set again due to its $\varphi$-invariance, i.e., it gets ``trapped''. In contrast to this, on an infinite state space solutions can leave every neighbourhood of an invariant set, no matter how small it is. This effect was exploited for constructing the examples in \cite{Sche02}.
}

\begin{remark} \LG{(i) We note that there is an asymmetry between the statements for forward and pullback attraction in Theorem \ref{thm:rattr}: while pullback attraction holds for all $\omega\in\Omega$, forward attraction holds only for almost all $\omega\in\Omega$. This cannot be strengthened, 
as we demonstrate using~TASEP in Remark~\ref{rem:nonsync}(ii), below.
}

\LG{
(ii) While Theorem \ref{thm:rattr} shows that random attractors both in the forward and in the pullback sense always exist for  a finite state RDS, they may not necessarily carry much information. One example for this is the random walk on $\Z_3$, see also \cite[Example 1.8]{LevP17}. We can formulate this process as an RDS in the same way as TASEP, using the state space $X=\{0,1,2\}$, the indices $k\in\{1,2\}$ (i.e., two ``random clocks'') and the map 
\[ f(x,k) = x+k \mod 3.\]
For this process, regardless of the order of the ringing of the clocks in the associated Poisson processes, one easily sees that for any two initial conditions $x_1$, $x_2\in X$ with $x_1 = x_2 + j \mod 3$, the solutions satisfy $\varphi(t,x_1,\omega) = \varphi(t,x_2,\omega) + j \mod 3$ for all $t\ge 0$. This implies that $\varphi(t,X,\omega) = X$ for all $\omega\in\Omega$ and all $t\ge 0$, yielding $A(\omega)=X$ for all $t\ge 0$. Hence, the sets forming the random attractor are the whole state space and the attractor becomes ``trivial'' in the sense that it does not give any information about the long term behavior except the trivial information that it is contained in $X$.} 
\label{rem:rw3}
\end{remark}

\subsection{Random attractors consisting of single trajectories}

\LG{
As we have seen in the last example, the random attractor need not yield useful information. Generally speaking, the random attractor gives  more information 
about the long time behavior of the RDS when the sets~$A(\omega)$ are small.  The most informative case is when  the random attractor consists of a single trajectory almost surely. In this case, the 
long time behavior is almost surely independent of the initial condition. The following theorem gives necessary and sufficient conditions for this property to hold.}
For its formulation we define the sets 
%%%%%%%%%%
\begin{equation} \Gamma(t,t_0):= \{\omega\in\Omega\,|\,\varphi(t,x_1,\theta_{t_0}\omega)=\varphi(t,x_2,\theta_{t_0}\omega) \mbox{ for all } x_1,x_2\in X\}.\label{eq:gammadef}\end{equation}

\begin{theorem} \LG{Consider an RDS with a finite state space. \TK{Then statements (2)-(5) below are all equivalent to each other and imply statement (1).}} 
%%%%%%%%%%%%%%%%%%%%%%%%
\LG{
\begin{enumerate} 
\item[(1)] There exists $t>0$ such that 
\[ \P(\Gamma(t,0)) > 0. \]
\item[(2)] For any $t_0\in\R$ it holds that
\[ \lim_{t\to\infty} \P(\Gamma(t,t_0)) = 1 \quad \mbox{and} \quad \lim_{t\to\infty} \P(\Gamma(t,t_0-t)) = 1 \]
and the rate of convergence is independent of $t_0$.
\item[(3)] For any $t_0\in\R$ 
\[ \P(\{\omega\in\Omega\,|\, \varphi(t,X,\theta_{t_0}\omega) \mbox{ is a singleton for some } t\ge 0\}) = 1.\]
\item[(4)] For any $t_0\in\R$ 
\[ \P(\{\omega\in\Omega\,|\, \varphi(t,X,\theta_{t_0-t}\omega) \mbox{ is a singleton for some } t\ge 0\}) = 1.\]
\item[(5)] The sets $A(\omega)$ from \eqref{eq:Adef} are singletons for almost all $\omega\in\Omega$.
\end{enumerate}
}
\LG{\TK{If in addition  
}
the solutions $\varphi(t,x,\cdot)$ are stochastically independent on non-overlapping intervals, i.e.,~$\varphi(t,x,\cdot)$ for $t\in[t_1,t_2)$ is independent of $\varphi(s,x,\cdot)$ for $s\in[s_1,s_2)$ if $[t_2,t_2) \cap [s_1,s_2) = \emptyset$\TK{, then statement~(1) is equivalent to statements~(2)-(5),
 and the convergence in statement~(2) has an exponential rate.} 
}
%%%%%%%%%%%%%%%%%%%%%%%%%%%%%%%%%%
\label{thm:singleton}
\end{theorem}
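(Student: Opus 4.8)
The plan is to collapse all five statements to a single scalar condition on the non-decreasing function $p(t):=\P(\Gamma(t,0))$, namely $\lim_{t\to\infty}p(t)=1$. First I would record two preliminary facts. By \eqref{eq:gammadef}, $\omega\in\Gamma(t,t_0)$ holds exactly when $\varphi(t,X,\theta_{t_0}\omega)$ is a singleton, so $\Gamma(t,t_0)=\theta_{-t_0}\Gamma(t,0)$; since each $\theta_{t_0}$ is a measure-preserving bijection, this gives $\P(\Gamma(t,t_0))=p(t)$ for \emph{every} $t_0\in\R$, and likewise $\P(\Gamma(t,t_0-t))=p(t)$. In particular the ``rate independent of $t_0$'' clause in (2) is automatic. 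Second, $t\mapsto|\varphi(t,X,\theta_{t_0}\omega)|$ is non-increasing, because by the cocycle property $\varphi(t+s,X,\theta_{t_0}\omega)=\varphi(s,\varphi(t,X,\theta_{t_0}\omega),\theta_{t_0+t}\omega)$ is the image of $\varphi(t,X,\theta_{t_0}\omega)$ under a map, and a map never increases cardinality. Hence once the image is a singleton it stays one, so the events $\Gamma(t,t_0)$ are increasing in $t$; the pullback events $\Gamma(t,t_0-t)$ are increasing in $t$ as well, using the nestedness \eqref{eq:cocyclsts}. In particular $p$ is non-decreasing.

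With these preliminaries the equivalences among (2)--(5) become routine. Statement (2) is literally $\lim_{t\to\infty}p(t)=1$ (both limits equal $\lim p(t)$ by the first preliminary). For (3), since $\Gamma(t,t_0)$ increases in $t$, continuity of measure from below gives $\P(\{\omega:\varphi(t,X,\theta_{t_0}\omega)\text{ is a singleton for some }t\ge0\})=\P(\bigcup_{t\ge0}\Gamma(t,t_0))=\lim_{t\to\infty}\P(\Gamma(t,t_0))=\lim_{t\to\infty}p(t)$, so $(3)\Leftrightarrow(2)$; the same computation with $\Gamma(t,t_0-t)$ gives $(4)\Leftrightarrow(2)$. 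For (5) I would invoke Theorem~\ref{thm:rattr}: the pullback sets $\varphi(t,X,\theta_{-t}\omega)$ decrease and stabilize at $A(\omega)$ in finite time, so $A(\omega)$ is a singleton if and only if $\varphi(t,X,\theta_{-t}\omega)$ is a singleton for some $t\ge0$, i.e. $\omega\in\bigcup_{t\ge0}\Gamma(t,-t)$; thus $\P(\{A(\omega)\text{ singleton}\})=\lim p(t)$ and $(5)\Leftrightarrow(4)$. Finally, each of (2)--(5) asserts $\lim p(t)=1$, which trivially produces some $t$ with $p(t)>0$, i.e. (1).

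The only part requiring the extra hypothesis is the converse $(1)\Rightarrow(2)$, for which the plan is a geometric ``sticky-collapse'' argument. Assume (1), so there is $\tau>0$ with $q:=p(\tau)>0$. The key deterministic fact is that collapse on a single window forces global collapse: if $\varphi(\tau,X,\theta_{i\tau}\omega)$ is a singleton for some $i\in\{0,\dots,m-1\}$, then by the cocycle property $\varphi((i+1)\tau,X,\omega)\subseteq\varphi(\tau,X,\theta_{i\tau}\omega)$ is a singleton, and applying the remaining maps keeps it one, so $\varphi(m\tau,X,\omega)$ is a singleton. This yields $\Gamma(m\tau,0)\supseteq\bigcup_{i=0}^{m-1}\Gamma(\tau,i\tau)$. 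I would then use the independence hypothesis: each $\Gamma(\tau,i\tau)=\theta_{-i\tau}\Gamma(\tau,0)$ is determined by the solution segment over the absolute-time window $[i\tau,(i+1)\tau)$, so the events $\Gamma(\tau,0),\dots,\Gamma(\tau,(m-1)\tau)$ are independent, each of probability $q$. Hence
\[ p(m\tau) \;\ge\; \P\Big(\bigcup_{i=0}^{m-1}\Gamma(\tau,i\tau)\Big) \;=\; 1-\prod_{i=0}^{m-1}\big(1-\P(\Gamma(\tau,i\tau))\big) \;=\; 1-(1-q)^m. \]
Since $p$ is non-decreasing, $p(t)\ge 1-(1-q)^{\lfloor t/\tau\rfloor}$ for all $t\ge0$, giving $\lim p(t)=1$ at the exponential rate $(1-q)^{t/\tau}$, uniform in $t_0$ by the first preliminary; this is precisely (2) with exponential convergence.

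The step I expect to be the main obstacle is the clean application of the independence assumption: one must argue that the $\theta$-shifted events $\Gamma(\tau,i\tau)$ depend only on the randomness in the pairwise disjoint windows $[i\tau,(i+1)\tau)$, which is where the shift relation \eqref{eq:actionoftheta} between $\theta_t$ and the driving noise enters and is the only point at which the ``independence on non-overlapping intervals'' hypothesis is used. Everything else is bookkeeping with measure-preservation, the cocycle property, and the finite-time stabilization of Theorem~\ref{thm:rattr}.
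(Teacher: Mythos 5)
Your proposal is correct and follows essentially the same route as the paper's proof: the same reduction of (2)--(5) to $\lim_{t\to\infty}\P(\Gamma(t,0))=1$ via measure preservation, monotonicity of the events $\Gamma(t,t_0)$ and $\Gamma(t,t_0-t)$, and the finite-time stabilization of $A(\omega)$ from Theorem~\ref{thm:rattr}. Your bound $1-(1-q)^m$ for the union of independent collapse events is exactly the paper's product bound $\prod_{l}\P(\Gamma(\Delta t,t_{l-1})^C)\le(1-\delta)^m$ on the complement, so the implication $(1)\Rightarrow(2)$ with exponential rate is also identical in substance.
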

\begin{proof}
\TK{We first show for a general finite state RDS the implication (2) $\Rightarrow$ (1) as well as the equivalences (2) $\Leftrightarrow$ (3), (2) $\Leftrightarrow$ (4), and (4) $\Leftrightarrow$ (5). The proof is then completed by demonstrating the implication (1) $\Rightarrow$ (2) with an exponential rate of convergence under the additional assumption of stochastic independence of the RDS.}
%\LG{We show the implications (1) $\Rightarrow$ (2) $\Rightarrow$ (3) $\Rightarrow$ (1) and (2) $\Rightarrow$ (4) $\Rightarrow$ (5) $\Rightarrow$ (1). These imply the equivalence of all properties.}

\TK{
(2) $\Rightarrow$ (1): This is obvious using the first part of statement (2).
}

\TK{
(2) $\Leftrightarrow$ (3): Since $\varphi(t,X,\theta_{t_0}\omega)$ is a singleton if and only if $\varphi(t,x_1,\theta_{t_0}\omega)=\varphi(t,x_2,\theta_{t_0}\omega)$  holds for all $x_1,x_2\in X$ we have 
\begin{equation}\label{eq:2to3}  
\bigcup_{t > 0} \, \Gamma (t, t_0) \; = \; \{\omega\in\Omega\,|\, \varphi(t,X,\theta_{t_0}\omega) \mbox{ is a singleton for some } t\ge 0\} \,.
\end{equation}
Note in addition that the sets $\Gamma (t, t_0)$ are monotonically increasing in $t$ as for all $\omega\in\Omega$ the identity $\varphi(t,x_1,\omega)=\varphi(t,x_2,\omega)$ ensures that
$\varphi(s,x_1,\omega)=\varphi(s,x_2,\omega)$ for all $s>t$ by the cocycle property in Definition~\ref{def:rds}. By monotone convergence we therefore have
\[ \lim_{t\to\infty} \P(\Gamma(t,t_0))  \; = \; \P(\{\omega\in\Omega\,|\, \varphi(t,X,\theta_{t_0}\omega) \mbox{ is a singleton for some } t\ge 0\})\]
which implies the equivalence between (3) and the first statement of (2). Observe that the measure perserving flow $\theta_t$ is a bijection from the set $\Gamma(t,t_0)$
onto the set $\Gamma(t,t_0-t)$ so that $\P(\Gamma(t,t_0)) = \P(\Gamma(t,t_0-t))$. This shows that the two statements formulated in~(2) are equivalent.
}
%Assume that \TK{statement} (3) does not hold, i.e., that there are $\eps>0$ and $t_0\in\R$ with
%\[ \P(\{\omega\in\Omega\,|\, \varphi(t,X,\theta_{t_0}\omega) \mbox{ is a singleton for some } t\ge 0\}) \le 1-\eps.\]
%Since $\varphi(t,X,\theta_{t_0}\omega)$ is a singleton if and only if $\varphi(t,x_1,\theta_{t_0}\omega)=\varphi(t,x_2,\theta_{t_0}\omega) \mbox{ for all } x_1,x_2\in X$, this implies
%\[
%\P(\{\omega\in\Omega\,|\,\varphi(t,x_1,\theta_{t_0}\omega)=\varphi(t,x_2,\theta_{t_0}\omega) \mbox{ for all } x_1,x_2\in X\}) \le 1-\eps \]
%for all $t\ge 0$, which contradicts the first statement of (2). 

\TK{
(2) $\Leftrightarrow$ (4): The relation corresponding to \eqref{eq:2to3} reads
\begin{equation}\label{eq:2to4}
\bigcup_{t > 0} \, \Gamma (t, t_0-t) \; = \; \{\omega\in\Omega\,|\, \varphi(t,X,\theta_{t_0-t}\omega) \mbox{ is a singleton for some } t\ge 0\} \,.
\end{equation}
Following the proof of the equivalence (2) $\Leftrightarrow$ (3) we only need to argue the monotonicity of the sets $\Gamma (t, t_0-t)$ with respect to $t$.
Replacing $\omega$ by $\theta_{t_0}\omega$ the monotonicity is a consequence of equation \eqref{eq:cocyclsts} that holds for $s,t >0$.
}

\TK{
(4) $\Leftrightarrow$ (5): The definition of the set $A(\omega)$ in~\eqref{eq:Adef} together the finiteness of the state space gives (cf.~the proof of~Theorem \ref{thm:rattr}) 
\begin{equation}\label{eq:4to5}
 \bigcup_{t > 0} \, \Gamma (t,-t)  \; = \;  \{\omega\in\Omega\,|\,A(\omega) \mbox{ is a singleton}\}\,.
\end{equation}
For every $t_0\in\R$ the measure preserving map $\theta_{t_0}$ is a bijection from the set $\bigcup_{t > 0} \, \Gamma (t, t_0-t)$ onto the set $\bigcup_{t > 0} \, \Gamma (t,-t)$. Therefore the sets on the right-hand-sides of relations \eqref{eq:2to4} and \eqref{eq:4to5} have the same probability.
}

\TK{From now on we also assume the stochastic independence of the RDS.}

(1) $\Rightarrow$ (2): \TK{Recall first that} the identity $\varphi(t,x_1,\omega)=\varphi(t,x_2,\omega)$ ensures that
$\varphi(s,x_1,\omega)=\varphi(s,x_2,\omega)$ for all $s>t$ \TK{and for all $\omega\in\Omega$}. This   yields
\begin{equation} \Gamma(t_3,t_1) \supset \Gamma(t_3,t_2) \cup \Gamma(t_2,t_1). \label{eq:gammasupset}\end{equation}
This in particular implies that the map $t\mapsto \P(\Gamma(t,0))$ is increasing, hence (1) implies that there exist $\delta>0$ and $\hat t>0$ such that $\P(\Gamma(t,0)) \ge \delta$ for all $t\ge \hat t$. Since the flow $\theta_{t_0}$ is measure preserving, we obtain $\P(\Gamma(t,t_0)) = \P(\Gamma(t,0))$ for all $t_0\in\R$. This implies that 
\begin{equation} \P(\Gamma(t,t_0))\ge \delta\label{eq:gammadelta}\end{equation}
for all $t\ge \hat t$ and all $t_0\in\R$, and that it is sufficient to prove $\lim_{t\to\infty} \P(\Gamma(t,0)) = 1$. Moreover, because of the monotonicity of $t\mapsto \P(\Gamma(t,0))$ it suffices to prove this convergence for a suitable sequence $t_m\to\infty$.

To this end, observe that for the complements $A^C := \Omega \setminus A$, relation \eqref{eq:gammasupset} implies 
\begin{equation} \Gamma(t_3,t_1)^C \subset \Gamma(t_3,t_2)^C \cap \Gamma(t_2,t_1)^C \label{eq:gammasubset}\end{equation}
for all $t_1<t_2<t_3$. We fix an arbitrary $\Delta t>\hat t$ and let $t_m=m\Delta t$. From \eqref{eq:gammadelta} it follows that 
\[ \P(\Gamma(\Delta t, t_m)) \ge \delta > 0, \]
implying
\[ \P(\Gamma(\Delta t,t_m)^C) \le 1-\delta <1. \]
Recall that $\delta$ only depends on $\Delta t$ and not on $m$. By the assumption on the RDS, the sets $\Gamma(\Delta t,t_m)$ and thus the sets $\Gamma(\Delta t,t_m)^C$ are stochastically independent for different $m$. Thus, using \eqref{eq:gammasubset} we obtain
\[ \P(\Gamma(t_{m},0)^C) \le \P\left(\bigcap_{l=1}^m\Gamma(\Delta t,t_{l-1})^C\right) = \prod_{l=1}^m \P(\Gamma(\Delta t,t_{l-1})^C) \le (1-\delta)^m \to 0\]
as $m\to\infty$. This implies $\P(\Gamma(t_{m},0))\to 1$ \TK{at an exponential rate, proving} the claim.
%The second property in (3) follows similarly using the second statement of (2).
% By Theorem \ref{thm:rattr}, the convergence
%\[ \lim_{t\to\infty} \dist\big(\varphi(t, X, \theta_{-t}\omega), A(\omega)\big) = 0 \]
%holds almost surely. 
%Since almost sure convergence implies convergence in probability, we obtain that for each $\eps>0$
%\[ \lim_{t\to\infty} \P(\dist\big(\varphi(t, X, \theta_{-t}\omega), A(\omega)\big) > \eps) = 0.\]
%Since the state space is finite, there is $\eps_0>0$ such that $\dist\big(\varphi(t, X, \theta_{-t}\omega), A(\omega)\big)<\eps_0$ implies $\varphi(t, X, \theta_{-t}\omega) \in A(\omega)$. Hence, choosing $\eps<\eps_0$ implies
%\[ \lim_{t\to\infty} \P(\varphi(t, X, \theta_{-t}\omega) \not\in A(\omega) ) = 0.\]
%Hence, for $t_1>0$ sufficiently large there exists $\delta>0$ with
%\[ \P(\varphi(t, X, \theta_{-t}\omega) \not\in A(\omega) ) \le 1-\delta \]
%for all $t\ge t_1$, yielding 
%\[ \P(\varphi(t, X, \theta_{-t}\omega) \in A(\omega) ) \ge \delta.\]
%Now, for those $\omega$ for which $A(\omega)$ is a singleton (which is a set of measure $1$), the inclusion $\varphi(t, X, \theta_{-t}\omega) \in A(\omega)$ implies $\varphi(t, x_1, \theta_{-t}\omega) = \varphi(t, x_2, \theta_{-t}\omega)$ for all $x_1,x_2\in X$. Thus, we obtain $\P(\Gamma(t,-t))\ge\delta$ and since the probability measure is invariant under $\theta_{t_0}$ we obtain $\P(\Gamma(t,t_0-t))\ge\delta$ for all $t_0\in \R$ and thus (1).
\end{proof}

\begin{remark} 
\LG{(i) Property (3) in Theorem \ref{thm:singleton} formalizes the fact that all trajectories synchronize almost surely. Hence, Theorem \ref{thm:singleton} in particular shows that almost sure synchronization is equivalent to the sets $A(\omega)$ being singletons almost surely.}

\LG{
(ii) In general we cannot expect that \TK{properties (2)-(5)} in Theorem \ref{thm:singleton} hold for all $\omega\in\Omega$ (as opposed to {\em for almost all $\omega$}). Remark \ref{rem:nonsync}(i), below, illustrates this for TASEP.
}
\end{remark}

\section{Random attraction in TASEP}\label{sec:TASEPattractors}

\LG{
We now apply the results of the previous section to TASEP. To this end, first note that both Theorem~\ref{thm:rattr} and all the equivalent statements in Theorem~\ref{thm:singleton} apply to the TASEP RDS, since the state space is finite and the paths of the homogeneous Poisson process are independent on non-overlapping intervals, implying the same for the solutions $\varphi$. 
}

\LG{
Hence, the sets $A(\omega)$ from \eqref{eq:Adef} define both a global forward and a global pullback random attractor. In order to show that the sets are singletons almost surely, we will verify property~(1) in Theorem~\ref{thm:singleton}. 
}

The proof of this property relies on the jump order sequences  
$(k_i)_i$ that were introduced in Section \ref{sec:rds}. 
Given a bounded interval $I=[\tau_1,\tau_2)$ of positive length then for all $\omega \in \Omega$ there exist only finitely many $i \in \Z$ with $t_i(\omega) \in I$, because the jump times do not accumulate on the real line by construction.
%We recall that for a bounded interval $I$ the sequence $k_i$ from this definition has almost surely finitely many elements, cf.\ the comments after Definition \ref{def:jos}. It is also almost surely unique, because non-uniqueness can only occur if there are $k\ne k'$ and $j$, $j'$ with $T_{k,j}(\omega)=T_{k',j'}(\omega)$, which is again an event with probability $0$. The easiest way to see this is to use that the sum of several Poisson processes is again a Poisson process. Hence, if we sum up all the $N_k$ to a Poisson process $N$, non-uniqueness occurs if two arrival times of $N$ coincide. Since the difference between two arrival times is exponentially distributed, this happens with probability 0 for the a.s. countably many differences that appear in the Poisson process.
Therefore only finite jump order sequences can be be realized in $I$. In turn, we now argue that for
any prescribed finite tuple of sites $(k_1, \ldots, k_m)$ of any given length $m$, there is a positive probability that this tuple equals the section of the jump order sequence which corresponds to the jump times in the interval $I=[\tau_1,\tau_2)$. Indeed, devide 
$I$ into $m$ subintervals $I_i=(q_{i-1},q_{i})\subset I$, $\tau_1=q_0<q_1<\ldots<q_m = \tau_2$ and 
denote by $\Pi_k$ the point process that is associated with the probability space $\mathcal W^{h_k}$ that is used in the construction of~$\Omega$ above 
with~$h_0:=\alpha$ and~$h_n:=\beta$. Since the point processes~$\Pi_k$ are independent one can compute from Definition~\ref{def:Ppp} the probability that for each $i=1, \ldots, m$ we have $N_{k_i}(I_i)=N_{k_i}(\bar{I_i})=1$ and that $N_{k}(\bar{I_i})=0$ for all $k \in \{0, \ldots, n\} \setminus \{k_i\}$ where the random variables $N_k(A) := \# (\Pi_k \cap A)$ are defined according to Definition~\ref{def:Ppp}. This probability is a finite product of positive numbers and therefore positive as we have claimed.
%requiring that $N_{k_i}$ has exactly one arrival time in $I_i$ while all $N_k$ with $k\ne k_i$ have no arrival times in $I_i$. Since the events $N_{k_i}([q_{i},q_{i+1}])=1$ and $N_k([q_{i},q_{i+1}])=0$ for all $k\ne k_i$ occur with positive probability if $q_{i}<q_{i+1}$, each of these events has a positive probability, hence the event that all the finitely many intervals $I_i$ satisfy the desired property at the same time also has a positive probability. 
Due to homogeneity of the Poisson processes, the probability for a particular jump order sequence to occur depends only on the length $\tau_2-\tau_1$ of the interval $I$ and not on the concrete values of $\tau_1$ and $\tau_2$. Moreover, on two non-overlapping intervals $[\tau_1,\tau_2)$ and $[\tau_3,\tau_4)$, where~$\tau_1<\tau_2\le \tau_3<\tau_4$, the jump order sequences are stochastically independent; this follows from the fact that $\Pi_k \cap [\tau_1,\tau_2)$ and $\Pi_{k'} \cap [\tau_3,\tau_4)$ are independent both for $k = k'$ due to condition (ii) of  Definition \ref{def:Ppp} and for $k\neq k'$ which, by construction,  holds even for the unrestricted point processes~$\Pi_k$ and~$\Pi_{k'}$.
%follows from the mutual independence of the $N_k$ required in Definition \ref{def:jt} for $k\ne j$ and from Definition \ref{def:genpoisson}(iv) for $k=j$.

Based on these observations, we can state the following result 
for the set~$\Gamma(t,t_0)$ from~\eqref{eq:gammadef}.

\begin{lemma} 
%%%%%%%%%%%%%%
In TASEP, for any $t>0$ there exists a $\delta>0$ such that \[ \P(\Gamma(t,0)) \ge \delta. \]
\label{lemma:fintime}
\end{lemma}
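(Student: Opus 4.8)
The plan is to reduce this probabilistic statement to a purely combinatorial one about the hopping map $f$. By the observations established in the paragraphs immediately preceding the lemma, for any prescribed finite tuple of sites $(k_1,\ldots,k_m)$ and any interval $I=[0,t)$ of positive length there is a \emph{positive} probability that the jump order sequence realized in $I$ is exactly $(k_1,\ldots,k_m)$. Hence it suffices to exhibit a single deterministic tuple $(k_1,\ldots,k_m)\in\{1,\ldots,n\}^m$ with the property that iterating $f$ along it sends \emph{every} initial state $x\in X$ to one and the same final state. Indeed, on the positive-probability event $E$ that this tuple is the jump order sequence in $[0,t)$, the definition of $\tilde\varphi$ in~\eqref{eq:tildephi} shows that $\varphi(t,x,\omega)$ equals this common final state for all $x\in X$, so that $E\subseteq\Gamma(t,0)$ and we may take $\delta:=\P(E)>0$.

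To construct such a synchronizing tuple I would flush the whole chain empty. Consider the block $B=(1,2,\ldots,n-1,n)$, i.e., ring the internal clocks from left to right and finally the exit clock. The central claim is that applying $B$ once decreases the number of particles by exactly one whenever the chain is nonempty, and leaves the empty configuration $0^n$ unchanged. Since only the index $k=n$ can remove a particle and it is used once, while $k=1,\ldots,n-1$ merely move particles to the right (and $k=0$ is never used), the particle count can drop by at most one; the substance of the claim is that it really does drop. To verify this, let $j$ be the position of the rightmost particle at the start of the block. Processing clocks $1,\ldots,j-1$ touches only sites $1,\ldots,j$ and cannot dislodge the particle at $j$, since a hop into the occupied site $j$ is blocked; thus site $j$ stays occupied and sites $j+1,\ldots,n$ stay empty. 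Processing clock $j$ then moves this particle to $j+1$, and because sites $j+2,\ldots,n$ still carry their initial (empty) values, clocks $j+1,\ldots,n-1$ let the particle ``surf'' undisturbed all the way to site $n$. Consequently site $n$ is occupied when the exit clock $n$ rings, and one particle leaves.

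Since $X=\{0,1\}^n$ contains at most $n$ particles, iterating the block $B$ a total of $n$ times empties any initial configuration, and once the state equals $0^n$ further blocks do nothing. Therefore the tuple obtained by concatenating $n$ copies of $B$, of length $m=n^2$, maps every $x\in X$ to $0^n$, which is precisely the synchronizing property required above. Combining this with the positive-probability observation completes the argument, with $\delta=\P(E)$ depending only on $t$ and the rates $\alpha,\beta,h_1,\ldots,h_{n-1}$.

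The passage from the combinatorial statement to the probability bound is immediate from the construction of the TASEP RDS and the preceding observations, so it amounts to bookkeeping. The only point demanding genuine care — and the step I expect to be the main obstacle — is the flushing lemma, specifically the verification that the left-to-right sweep carries the rightmost particle to the exit without interference, so that the particle count strictly decreases with each block. Everything else reduces to finite bookkeeping on the $n$ sites.
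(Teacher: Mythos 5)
Your proof is correct and follows essentially the same strategy as the paper: invoke the positive probability of realizing any prescribed finite jump order sequence on $[0,t)$, and then exhibit one deterministic sequence that flushes every initial configuration to the empty state $(0,\ldots,0)$. The only difference is the witness: you repeat the left-to-right sweep $(1,2,\ldots,n)$ $n$ times (length $n^2$), whereas the paper uses the right-anchored triangular sequence $n;\,n-1,n;\,\ldots;\,1,2,\ldots,n$ of length $n(n+1)/2$; both flushing arguments are valid.
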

\begin{proof}
According to our previous considerations, the probability that on the interval $[0,t)$ the jump order sequence 
\[  n, \;\; n-1, n, \;\; n-2, n-1, n, \;\; n-3, \ldots,n, \;\; \ldots,
 \;\;1,2, \ldots,n \]
occurs is equal to some $\delta>0$.
% with $\delta$ being independent of $t_0$. 
Now, for all $\omega\in\Omega$ generating this sequence it is easily seen that $\varphi(t,x,\omega) = (0,\ldots,0)$ for all $x\in X$. This shows the claim.
\end{proof}

\begin{remark}\label{rem:fintime}
\LG{We note that the statement of Lemma \ref{lemma:fintime} is stronger than property (1) of Theorem~\ref{thm:singleton}, because we obtain the desired inequality $\P(\Gamma(t,0)) >0$ for all $t>0$. }
\end{remark}

\begin{remark} The probability bound $\delta>0$ that follows from the proof of Lemma \ref{lemma:fintime} is clearly not optimal. \TK{In order to obtain upper bounds on mixing times in the asymptotic regime of large lattice sizes a number of techniques have been developed to obtain good lower bounds on the probability  
$\P(\Gamma(t,0))$ for simple exclusion processes, including  TASEP, with open boundaries and constant jump rates in the interior, see \cite{Gantert_etal_2020} and the references therein.}
%and we will see in our numerical simulations in Section \ref{sec:NumSim} that it significantly underestimates the true probability $\P(\Gamma(t,0))$. More tight estimates would have to identify the set of all jump order sequences that yield identity. For this task, we conjecture that the observation that the distance between $\varphi(t,x_1,\theta_{t_0}\omega)$ and $\varphi(t,x_2,\theta_{t_0}\omega)$ is monotone decreasing in $t$ might be helpful. 
%\LG{This property is also used in Part III of \cite{Ligg99} (under the name ``(vanishing) discrepancy'') for analyzing invariant measures and the convergence of probability  distributions in TASEP.
%{\tt Add a reference to Gantert et al.\ here once the numerics section is written.}
%}
%Since in this paper we are interested in the long time behaviour, the precise value of $\delta$ is not important. We therefore leave this question to future research.
\end{remark}

Using Lemma \ref{lemma:fintime}, we can now state the following theorem. 

\begin{theorem} In TASEP, the sets $A(\omega)$ from \eqref{eq:Adef} define a forward and a pullback global random attractor, where almost surely the set $A(\omega)$ is a singleton.
\end{theorem}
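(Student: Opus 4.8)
The plan is to combine the two main structural results already established, namely Theorem~\ref{thm:rattr} and Theorem~\ref{thm:singleton}, with the concrete probabilistic estimate provided by Lemma~\ref{lemma:fintime}. The logical skeleton is therefore very short: verify that the TASEP RDS falls under the hypotheses of both theorems, then feed Lemma~\ref{lemma:fintime} into the equivalence chain of Theorem~\ref{thm:singleton} to upgrade from ``positive probability of synchronization'' to ``the attractor sets are singletons almost surely.''

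First I would record that the TASEP construction of Section~\ref{sec:TASEP} yields a genuine RDS in the sense of Definition~\ref{def:rds} on the finite state space $X=\{0,1\}^n$. Since $X$ is finite, Theorem~\ref{thm:rattr} applies verbatim and gives that the sets $A(\omega)$ defined in~\eqref{eq:Adef} are nonempty and form both a global random pullback attractor and a global random forward attractor. This already establishes the first assertion of the theorem. It then remains only to prove that these sets are singletons for almost every $\omega$.

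For the singleton property I would invoke Theorem~\ref{thm:singleton}, whose statement (5) is exactly the claim to be shown. The theorem asserts that statements~(2)--(5) are all equivalent and that, under the additional hypothesis of stochastic independence on non-overlapping intervals, statement~(1) becomes equivalent to them as well. Thus the work reduces to two checks: (a) that the TASEP RDS satisfies the independence hypothesis, and (b) that statement~(1) holds, i.e.~that $\P(\Gamma(t,0))>0$ for some $t>0$. For~(a) I would point to the construction of $\Omega$ as a product of spaces $\mathcal W^{h_k}$ carrying independent homogeneous Poisson processes; as argued in Section~\ref{sec:TASEPattractors}, the restrictions $\Pi_k\cap[\tau_1,\tau_2)$ and $\Pi_{k'}\cap[\tau_3,\tau_4)$ are independent on disjoint intervals, both for $k=k'$ (by condition~(ii) of Definition~\ref{def:Ppp}) and for $k\ne k'$ (by independence of the factor spaces), and since the solution map $\varphi$ on a time interval depends only on the jump times and jump orders in that interval, the solutions inherit independence on non-overlapping intervals. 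For~(b) I would simply cite Lemma~\ref{lemma:fintime}, which provides $\delta>0$ with $\P(\Gamma(t,0))\ge\delta>0$ and hence verifies statement~(1).

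Having checked both the independence hypothesis and statement~(1), the equivalence in Theorem~\ref{thm:singleton} forces statement~(5): the sets $A(\omega)$ are singletons for almost all $\omega\in\Omega$. This completes the proof. I do not anticipate a genuine obstacle here, as the theorem does essentially all the heavy lifting; the only point demanding a little care is the verification of the independence hypothesis~(a), since one must articulate clearly why independence of the driving Poisson processes on disjoint time windows transfers to the cocycle $\varphi$. Given the explicit dependence of $\tilde\varphi$ in~\eqref{eq:tildephi} on only the finitely many jump times and indices lying in the relevant interval, this transfer is immediate, so even this step is routine.
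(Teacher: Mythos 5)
Your proposal is correct and follows exactly the paper's route: the published proof is precisely the combination of Theorem~\ref{thm:rattr}, Lemma~\ref{lemma:fintime}, and the equivalences of Theorem~\ref{thm:singleton}, with the independence of the Poisson processes on non-overlapping intervals verified beforehand just as you describe. Nothing to add.
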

\begin{proof}\LG{
This follows from combining Theorem \ref{thm:rattr}, Theorem \ref{thm:singleton}, and Lemma \ref{lemma:fintime}.}
\end{proof}

\begin{remark}\label{rem:nonsync}
(i) \TK{Statement} (3) in Theorem \ref{thm:singleton} in particular implies that any two solutions of TASEP synchronize almost surely after sufficiently large time. However, this almost sure identity does not exclude the existence of non-trivial jump time sequences for which $\varphi(t,x_1,\theta_{t_0}\omega)$ and $\varphi(t,x_2,\theta_{t_0}\omega)$ never coincide. Consider TASEP with, e.g., chain length $n=3$ and initial conditions $x_1=110$ and $x_2=000$. Any jump time sequence that on $[t_0,\infty)$ generates the periodic jump order sequence $$(k_i(\omega))=(2, 1, 0, 1, 2,3 ; 2,1,0,1,2,3; 2,1,0,1,2,3; \ldots)$$ yields the two solutions 
\[ \begin{array}{lcc}t& \varphi(t,x_1,\theta_{t_0}\omega) & \varphi(t,x_2,\theta_{t_0}\omega)\\[2ex]
t_0 & 110 & 000\\
t_1(\omega) & 101 & 000\\
t_2(\omega) & 011 & 000\\
t_3(\omega) & 111 & 100\\
t_4(\omega) & 111 & 010\\
t_5(\omega) & 111 & 001\\
t_6(\omega) & 110 & 000,
\end{array}\]
where the table shows the values of the solutions right after the jump times $t_i(\omega)$, $i\ge 1$, which are numbered such that $t_1(\omega)$ is the first jump time after the initial time $t_0$. The periodicity of these solutions 
implies that  the two solutions never coincide. However, \TK{statement}~(3) in
 Theorem~\ref{thm:singleton} implies that the set of $\omega$ corresponding to such jump time sequences must have measure $0$.

\LG{(ii) This example can also be used to show that in TASEP forward attraction of $A$ does not hold for every $\omega\in\Omega$. To this end, consider an $\omega$ for which $A(\omega)$ is a singleton. Since~\eqref{eq:Adef} implies that $A(\omega)$ only depends on the jumps for t < 0, we may choose the jumps for $t \ge 0$ arbitrarily without changing $A(\omega)$. Particularly, we may choose $\omega$ such that the jump sequence for $t \ge 0$ generates the periodic non-synchronizing trajectories from part (i) of this remark. This means that $\varphi(t,X,\omega)$ contains at least two points for any $t>0$. However, since $A(\omega)$ is a singleton, by invariance $A(\theta_t\omega) = \varphi(t,A(\omega),\omega)$ must be a singleton for all $t>0$. Hence, the
 forward convergence property~$\lim_{t\to\infty} \dist\big(\varphi(t, X, \omega), A(\theta_t\omega)\big) = 0$ cannot hold for this $\omega$. }
\end{remark}

\section{\LG{Relation to the dynamics of the distribution}}\label{sec:distributions}

\TK{
In the theory of Markov processes a central object of study is the evolution of the probability distribution on the state space that is induced by the process and which is governed by the master equation.
Similarly, it is natural to associate with a random dynamical system $(\theta, \varphi)$ (see Definition \ref{def:rds}) such an evolution. Starting at time $t_0=0$ in a state $x\in X$ we let~$P^t_{x}$ denote the pushforward of the probability measure $\mathbb P$ under the map $\omega \mapsto \varphi(t, x, \omega)$, $t\geq 0$. Clearly,  $P^t_{x}$ is  a probability distribution on $X$. In the case of a finite state space the probability of each state $z \in X$ is simply given by
$$
(P^{t}_x)(\{z\}) = \P(\{ \omega \in \Omega \, | \, \varphi(t,x,\omega)=z)\} ) \,.
$$
As in the case of Markov processes one may extend this definition to arbitrary initial probability distributions $\mu$ on $X$ via $P^{t}_\mu := \sum_{x \in X} \mu(\{x\}) P^{t}_x$.
A stationary distribution for the RDS is a probability distribution $\pi$ on $X$ that is invariant under the evolution, i.e., it satisfies $\pi = P^{t}_\pi$ for all $t\geq 0$. Note that in the special case  where  the RDS is constructed from a homogeneous Markov process on a finite state space, using the procedure that we made explicit for TASEP in Section~\ref{sec:TASEP}, the just presented definitions of the evolution of probability distributions and of the stationary measure agree with those used in the theory of Markov processes, see e.g., Section~20.1 below equation (20.3) in~\cite{LevP17}.
}

\TK{
We are now ready to formulate the main result of this section which states that almost sure synchronization implies that for all initial distributions $\mu$ and $\nu$ the distance between the evolving probability distributions 
$P^{t}_\mu$ and $P^{t}_\nu$, measured in the total-variance metric, tends to zero as time tends to infinity. If, in addition, it is known that a stationary measure exists, then one may conclude that this stationary distribution is a global attractor. The proof of this result uses that the RDS provides a grand coupling between probability distributions $P^{t}_x$ and $P^{t}_y$. The core of our argument is taken from the proof of Theorem~5.4 in~\cite{LevP17} for Markov chains. It is contained in the following Proposition~\ref{prop:coupling_estimate}. We also reproduce its short proof for the convenience of the reader.
}
\begin{proposition} \TK{Consider an RDS on a finite state space~$X$. Recall
 the definition of the set~$\Gamma(t, t_0)$ in~\eqref{eq:gammadef}. Then for all probability distributions $\mu$ and $\nu$ on $X$ we have $$\| P^{t}_\mu -P^{t}_\nu \|_{TV} \leq 1- \P (\Gamma (t,0)) \text{ for all } t\geq 0.$$
}\label{prop:coupling_estimate}
\end{proposition}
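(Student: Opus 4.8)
The plan is to exploit the grand coupling furnished by the RDS together with the classical coupling inequality for the total-variation distance. First I would record two standard facts about the total-variation distance on a finite state space. On the one hand, $\|P-Q\|_{TV}=\max_{B\subseteq X}\big(P(B)-Q(B)\big)$ (the absolute value may be dropped by passing to complements). On the other hand, for any coupling $(U,V)$ of two distributions $P$ and $Q$ on a common probability space one has the coupling inequality $\|P-Q\|_{TV}\le\P(U\ne V)$. I would reproduce its short proof: for every $B\subseteq X$,
\[ P(B)-Q(B)=\P(U\in B)-\P(V\in B)\le \P(U\in B,\,V\notin B)\le \P(U\ne V), \]
and taking the maximum over $B\subseteq X$ gives the claim.

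Next I would construct the coupling explicitly from the RDS. On the product of $(\Omega,\FF,\P)$ with two further independent copies of $X$ carrying the laws $\mu$ and $\nu$, I draw $U_0\sim\mu$, $V_0\sim\nu$ and $\omega\sim\P$ all mutually independent, and set $U_t:=\varphi(t,U_0,\omega)$ and $V_t:=\varphi(t,V_0,\omega)$. By the very definition $P^{t}_\mu=\sum_{x\in X}\mu(\{x\})P^{t}_x$, and since $P^{t}_x$ is the pushforward of $\P$ under $\omega\mapsto\varphi(t,x,\omega)$ and $\omega$ is independent of $U_0$, the marginal law of $U_t$ is exactly $P^{t}_\mu$; likewise the marginal of $V_t$ is $P^{t}_\nu$. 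Hence $(U_t,V_t)$ is a coupling of $P^{t}_\mu$ and $P^{t}_\nu$, and the coupling inequality yields $\|P^{t}_\mu-P^{t}_\nu\|_{TV}\le \P(U_t\ne V_t)$.

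Finally I would bound the disagreement probability by means of $\Gamma(t,0)$. The key observation is that whenever $\omega\in\Gamma(t,0)$, the defining property \eqref{eq:gammadef} gives $\varphi(t,x_1,\omega)=\varphi(t,x_2,\omega)$ for \emph{all} $x_1,x_2\in X$, so in particular $U_t=\varphi(t,U_0,\omega)=\varphi(t,V_0,\omega)=V_t$ regardless of the realized values of $U_0$ and $V_0$. Consequently the disagreement event is contained in $\{\omega\in\Gamma(t,0)^C\}$, and since $\omega$ is independent of $(U_0,V_0)$ we obtain $\P(U_t\ne V_t)\le \P(\Gamma(t,0)^C)=1-\P(\Gamma(t,0))$. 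Chaining this with the coupling inequality gives $\|P^{t}_\mu-P^{t}_\nu\|_{TV}\le 1-\P(\Gamma(t,0))$, as asserted.

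I expect the only real subtlety — rather than a genuine obstacle — to be the bookkeeping involved in enlarging the probability space so that $\omega$ is independent of the randomized initial conditions and the marginals of $U_t,V_t$ come out as $P^{t}_\mu,P^{t}_\nu$; once this is in place, both the containment $\{U_t\ne V_t\}\subseteq\{\omega\in\Gamma(t,0)^C\}$ and the coupling inequality are entirely routine.
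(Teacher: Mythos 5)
Your proof is correct and follows the same core strategy as the paper's: use the RDS as a grand coupling and bound the total-variation distance by the probability of disagreement, which is contained in the complement of $\Gamma(t,0)$. The only (minor) divergence is in how general initial distributions are handled: you enlarge the probability space with independent randomized initial conditions $U_0\sim\mu$, $V_0\sim\nu$ to get a genuine coupling of $P^t_\mu$ and $P^t_\nu$ directly, whereas the paper couples only the point-mass dynamics $\varphi(t,x,\cdot)$ and $\varphi(t,y,\cdot)$ and then extends to arbitrary $\mu,\nu$ by observing that $P^t_\mu(A)$ always lies between $\min_{x}P^t_x(A)$ and $\max_{x}P^t_x(A)$; both routes are valid and of essentially equal length.
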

\begin{proof}
\TK{
For each $x,y \in X$ the distribution of the random variables $X_t:= \varphi(t,x,\cdot)$ and $Y_t:= \varphi(t,y,\cdot)$ is  given by $P^{t}_x$ and $P^{t}_y$ respectively.
In other words, the random variables $X_t$ and $Y_t$ provide a \emph{coupling} of the measures $P^{t}_x$ and $P^{t}_y$, see e.g.,~\cite[Section~4.2]{LevP17}. Denoting by $\bigtriangleup$ the symmetric difference operating on sets, we have for each subset $A \subset X$ that
\begin{equation}\label{eq:basic_estimate}
\left| P^{t}_x(A)-P^{t}_y(A) \right| \; \leq \; \P(\{X_t \in A\} \bigtriangleup \{Y_t \in A\}) \; \leq \; \P (X_t \neq Y_t) \; \leq \; 1- \P (\Gamma (t,0))\,.
\end{equation}
For any probability distribution $\mu$ on $X$  the value of $P^{t}_\mu(A)$ is bounded below and above by the minimal and maximal value that $P^{t}_x(A)$ attains as $x$ varies over $X$ respectively. It therefore lies in an interval the length of which is bounded above by $1- \P (\Gamma (t,0))$ due to estimate \eqref{eq:basic_estimate}. Therefore we obtain the bound $| P^{t}_\mu(A) -P^{t}_\nu(A)| \leq 1- \P (\Gamma (t,0))$ for all distributions $\mu, \nu$ on $X$ irrespective of the chosen set $A$. This proves the claim.
}
\end{proof}

\begin{theorem}
\TK{Consider an RDS on a finite state space $X$ that synchronizes almost surely in the sense that one and hence all of the statements (2)-(5) in~Theorem~\ref{thm:singleton} hold true. Then we have
\begin{equation}\label{eqn:contractionT}
\sup \{ \| P^{t}_\mu -P^{t}_\nu \|_{TV}  \; | \; \mu, \nu \mbox{ are probability distributions on $X$} \}  \; \longrightarrow \; 0\,, \; \mbox{ as $t \to \infty$}\,.
\end{equation}
If in addition a stationary measure $\pi$ exists for the RDS then $\pi$ is a global attractor with uniform convergence
\begin{equation}\label{eqn:convergenceT}
\sup \{ \| P^{t}_\mu - \pi \|_{TV}  \; | \; \mu \mbox{ is a  probability distribution on $X$} \}  \; \longrightarrow \; 0\,, \; \mbox{ as $t \to \infty$}\,.
\end{equation}
For both results \eqref{eqn:contractionT} and \eqref{eqn:convergenceT} the convergence has an exponential rate if the RDS is stochastically independent in the sense that is formulated in the statement of~Theorem~\ref{thm:singleton}. 
}
\label{thm:RDSdistrT}\end{theorem}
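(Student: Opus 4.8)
The plan is to derive this theorem as a direct consequence of the coupling estimate in Proposition~\ref{prop:coupling_estimate} together with the synchronization characterization in Theorem~\ref{thm:singleton}. The genuinely substantive work—the coupling argument bounding the total-variation distance and the Borel--Cantelli-type estimate proving $\P(\Gamma(t,0))\to 1$—has already been carried out in those two results, so what remains is essentially an assembly.

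First I would prove \eqref{eqn:contractionT}. The key observation is that the bound
\[ \| P^{t}_\mu -P^{t}_\nu \|_{TV} \leq 1- \P (\Gamma (t,0)) \]
from Proposition~\ref{prop:coupling_estimate} is uniform in $\mu$ and $\nu$, since its right-hand side depends only on $t$. Taking the supremum over all pairs of probability distributions therefore yields
\[ \sup_{\mu,\nu} \| P^{t}_\mu -P^{t}_\nu \|_{TV} \leq 1- \P (\Gamma (t,0)). \]
By hypothesis the RDS synchronizes almost surely, so statement~(2) of Theorem~\ref{thm:singleton} applies with $t_0=0$ and gives $\P(\Gamma(t,0)) \to 1$ as $t\to\infty$. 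Hence $1-\P(\Gamma(t,0)) \to 0$, which proves \eqref{eqn:contractionT}.

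Next I would obtain \eqref{eqn:convergenceT} under the additional existence of a stationary measure $\pi$. Since $\pi$ is stationary we have $\pi = P^{t}_\pi$ for every $t\ge 0$, so $\pi$ is itself one of the admissible distributions appearing in the supremum of \eqref{eqn:contractionT}. Choosing $\nu=\pi$ in the uniform bound above gives
\[ \| P^{t}_\mu - \pi \|_{TV} = \| P^{t}_\mu - P^{t}_\pi \|_{TV} \leq 1- \P (\Gamma (t,0)) \]
for every $\mu$, and taking the supremum over $\mu$ together with $\P(\Gamma(t,0))\to 1$ yields \eqref{eqn:convergenceT}. I note in passing that the same estimate applied to two stationary measures shows that $\pi$ is in fact unique, although this is not needed for the statement.

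Finally, for the exponential rate I would invoke the last assertion of Theorem~\ref{thm:singleton}: under the stochastic independence assumption the convergence $\P(\Gamma(t,0))\to 1$ in statement~(2) is exponential, so $1-\P(\Gamma(t,0))$ decays exponentially and the same rate carries over to both \eqref{eqn:contractionT} and \eqref{eqn:convergenceT} through the uniform bound. Since the argument is otherwise immediate, the only point requiring genuine care is verifying that the right-hand side of the Proposition's estimate is truly independent of the distributions, so that passing to the supremum and inserting $\pi$ as a particular choice of $\nu$ is legitimate; once this uniformity is recognized, there is no further obstacle.
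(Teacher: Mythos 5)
Your proposal is correct and follows exactly the paper's own route: the paper likewise deduces \eqref{eqn:contractionT} directly from Proposition~\ref{prop:coupling_estimate} combined with Theorem~\ref{thm:singleton}, and obtains \eqref{eqn:convergenceT} by specializing to $\nu=\pi$ via $\pi=P^t_\pi$, with the exponential rate inherited from the exponential convergence of $\P(\Gamma(t,0))$ under the independence assumption. Your write-up merely spells out the uniformity of the coupling bound in $\mu,\nu$, which the paper leaves implicit.
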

\begin{proof}
\TK{Claim \eqref{eqn:contractionT} is a direct consequence of Proposition \ref{prop:coupling_estimate} and of Theorem~\ref{thm:singleton}. Application of \eqref{eqn:contractionT} to the special case $\nu = \pi$ and using the defining relation $\pi = P^{t}_\pi$ for the stationary measure $\pi$ we immediately obtain the convergence result \eqref{eqn:convergenceT}.
}
\end{proof}

\TK{
An example where the evolution of probability distributions is contractive, but where no stationary measure exists is given by~TASEP with transition rates that are not constant but depend periodically on time, see~\cite{entrain_master} and the discussion contained in the last paragraph of Section~\ref{sec:extensions}.
}

\TK
{If the RDS is constructed from a finite state Markov process with constant transition rates then the existence of a stationary measure is guaranteed and almost sure synchronization implies \eqref{eqn:convergenceT}. Therefore the stationary measure is unique in this case and, consequently, the underlying Markov process has  a single essential communicating class, see e.g.,~\cite[Proposition 1.29]{LevP17}. 
Note, however, that the example of a random walk on the triangle $\mathbb Z_3$ presented in Remark~\ref{rem:rw3}~(ii) defines an irreducible Markov process with a unique and globally attracting stationary measure but the corresponding RDS does not synchronize as the attractor surely consists of all three states of the state space. This shows that almost sure synchronization is a stronger property for homogeneous finite state Markov processes than having a unique stationary measure.
}

\section{Numerical simulations of synchronization}\label{sec:NumSim}
%%%%%%%%%%%%%%%%%%%%%%%%%%%%%%%%%%%%%%%%%%%%%%%%%%%%%%%%%%%%%%%%%%%%

\TK{To visualize  the random attraction that we have proved for~TASEP,
 we have simulated three random jump time sequences.} Each  one
was used to simulate two TASEP trajectories with different initial conditions. The resulting trajectories are shown in Figure \ref{fig:sim}, where each column corresponds to one of the jump time sequences and the states of the two trajectories with different initial conditions are shown on top of each other for times $t=0,10,\ldots,100$. The parameters were chosen as $n=20$ sites and $\alpha=\beta=h_k=1$ for all rates.

\begin{figure}
\begin{minipage}{1.4cm}$t=0$\\[6mm]
\end{minipage}
\includegraphics[trim=24 110 24 160, clip, width=4cm]{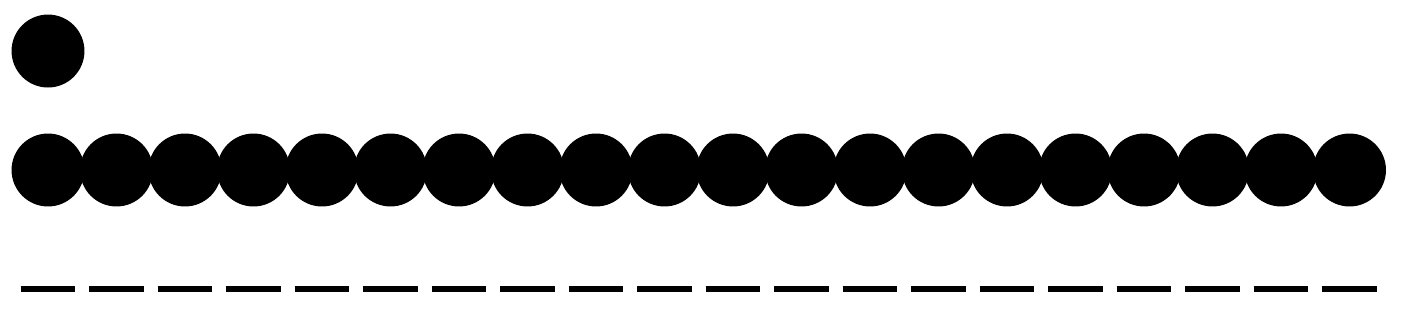}\qquad
\includegraphics[trim=24 110 24 160, clip, width=4cm]{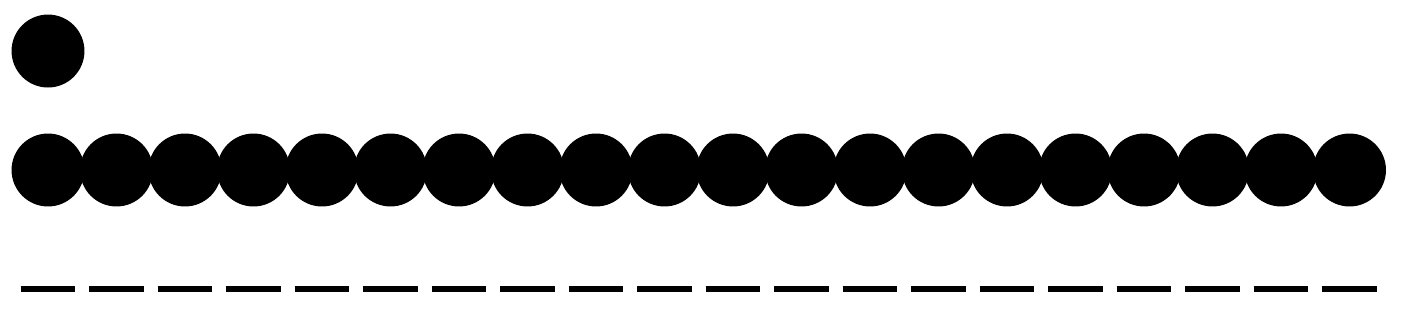}\qquad
\includegraphics[trim=24 110 24 160, clip, width=4cm]{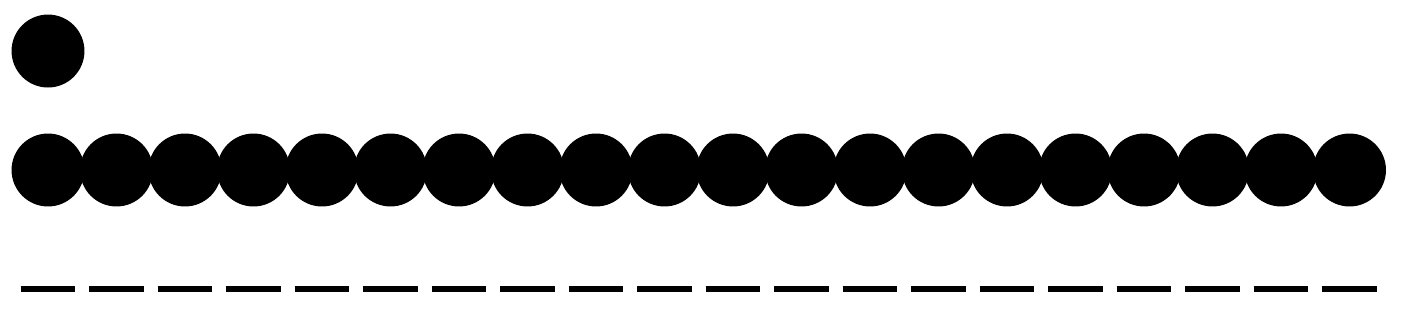}

\begin{minipage}{1.4cm}$t=10$\\[6mm]
\end{minipage}
\includegraphics[trim=24 110 24 160, clip, width=4cm]{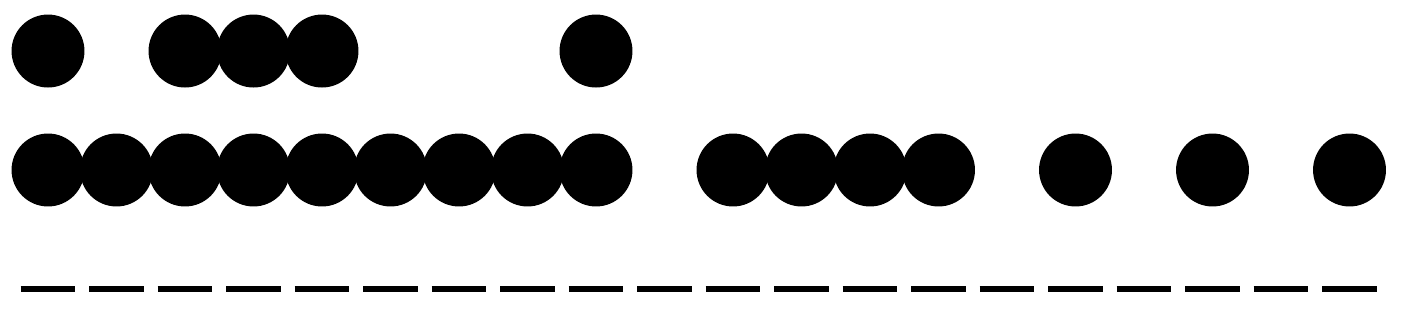}\qquad
\includegraphics[trim=24 110 24 160, clip, width=4cm]{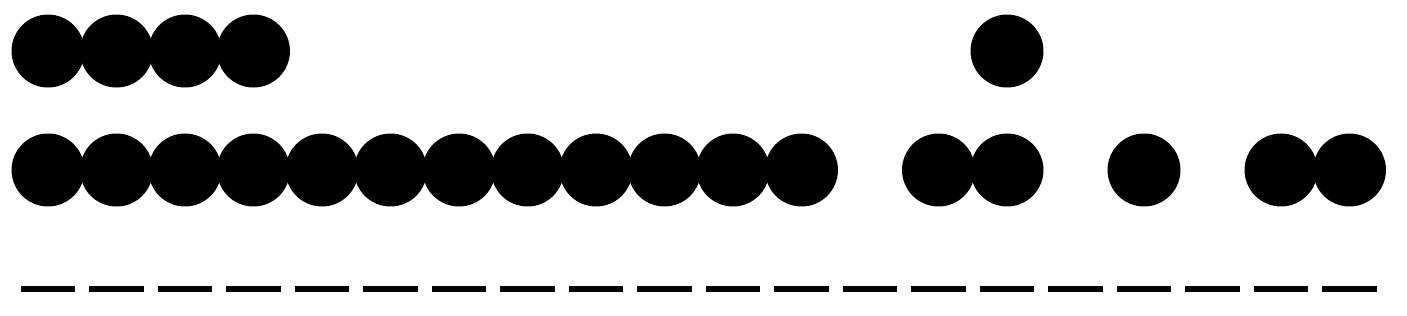}\qquad
\includegraphics[trim=24 110 24 160, clip, width=4cm]{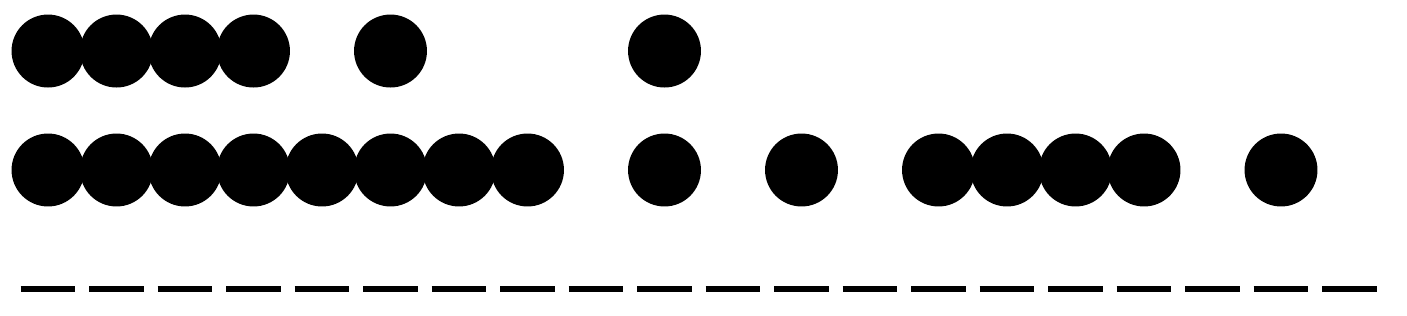}

\begin{minipage}{1.4cm}$t=20$\\[6mm]
\end{minipage}
\includegraphics[trim=24 110 24 160, clip, width=4cm]{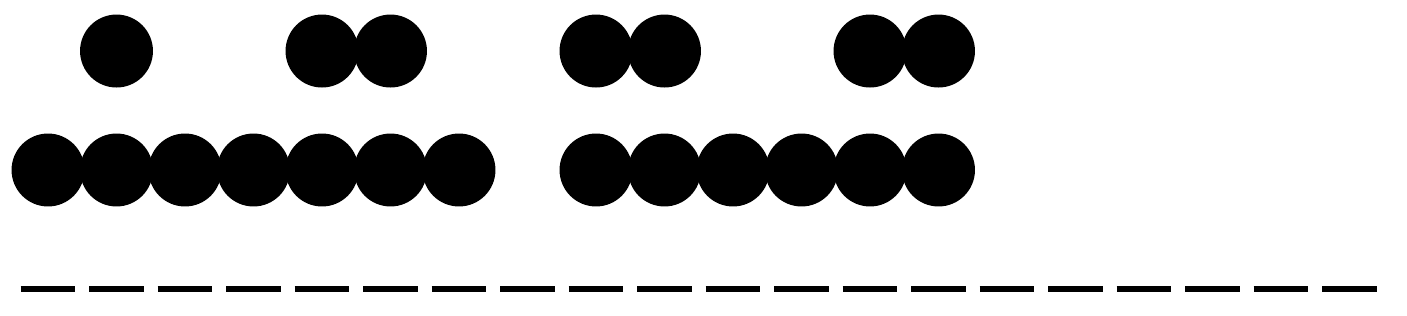}\qquad
\includegraphics[trim=24 110 24 160, clip, width=4cm]{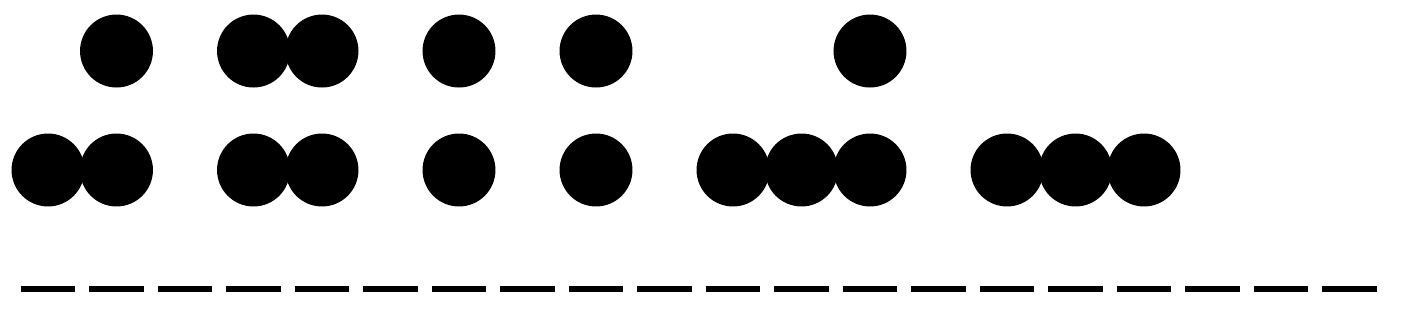}\qquad
\includegraphics[trim=24 110 24 160, clip, width=4cm]{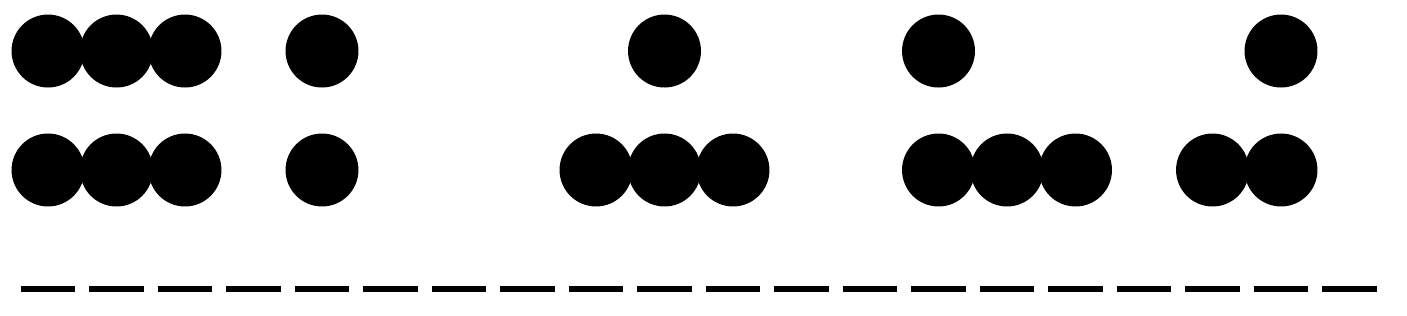}

\begin{minipage}{1.4cm}$t=30$\\[6mm]
\end{minipage}
\includegraphics[trim=24 110 24 160, clip, width=4cm]{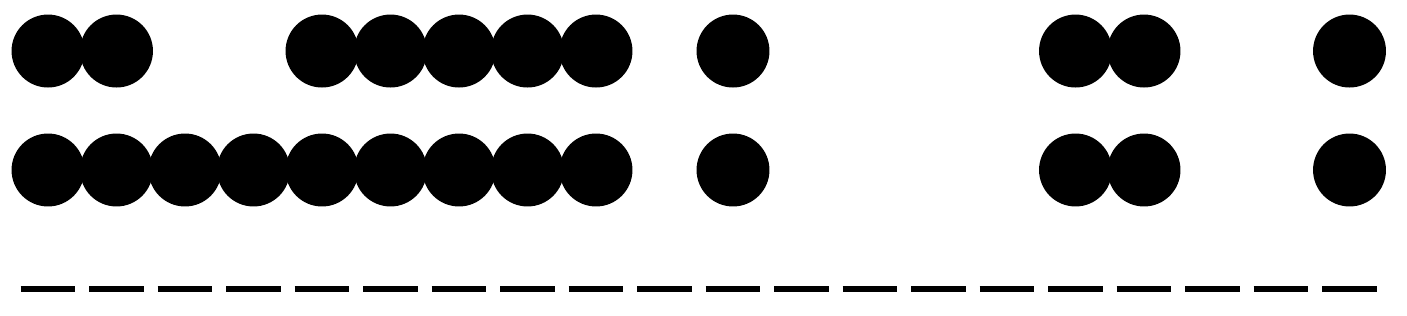}\qquad
\includegraphics[trim=24 110 24 160, clip, width=4cm]{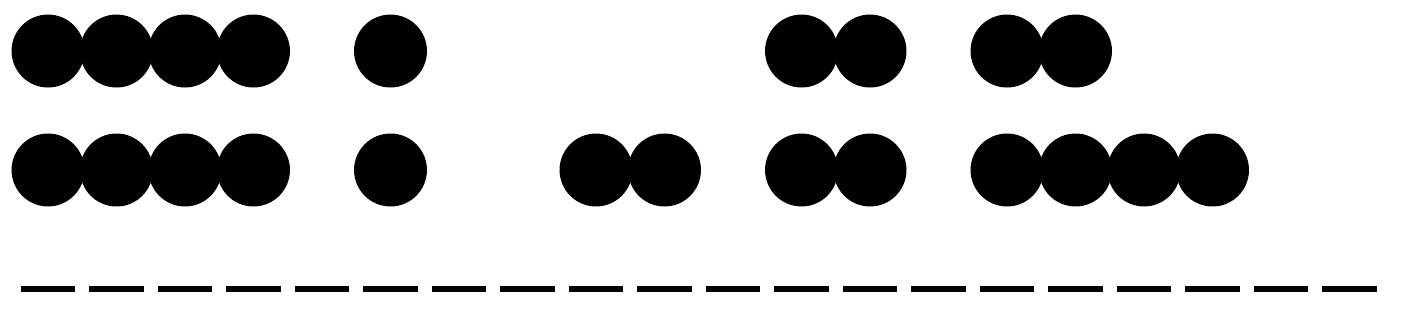}\qquad
\includegraphics[trim=24 110 24 160, clip, width=4cm]{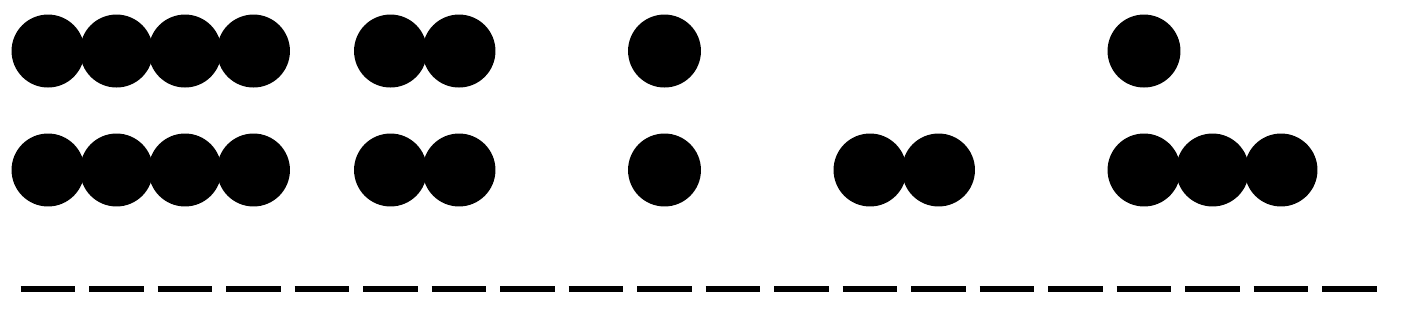}

\begin{minipage}{1.4cm}$t=40$\\[6mm]
\end{minipage}
\includegraphics[trim=24 110 24 160, clip, width=4cm]{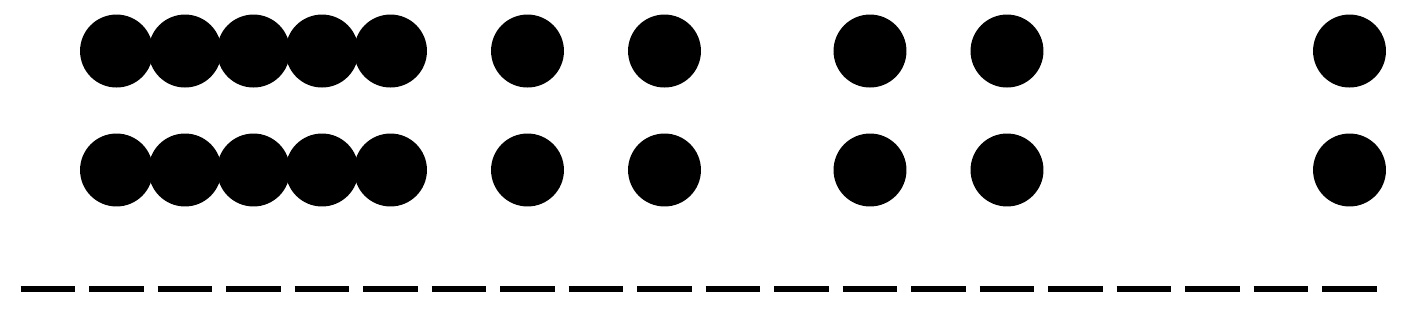}\qquad
\includegraphics[trim=24 110 24 160, clip, width=4cm]{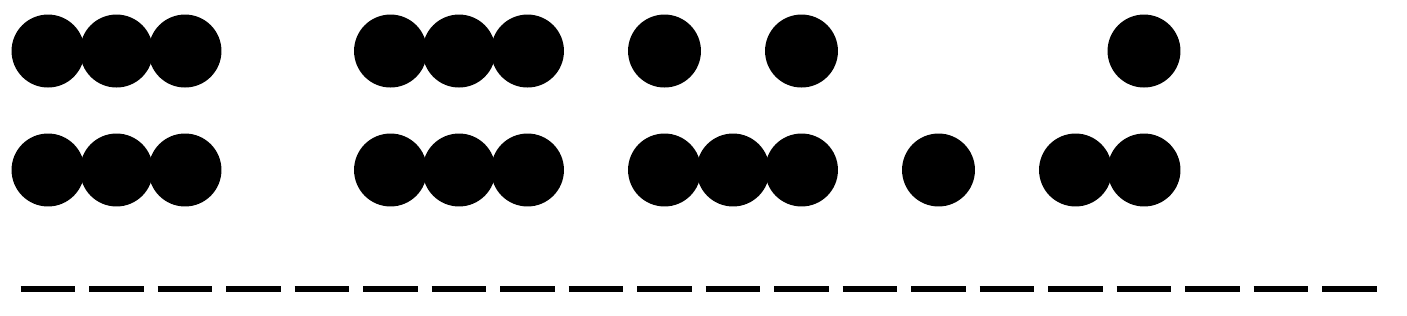}\qquad
\includegraphics[trim=24 110 24 160, clip, width=4cm]{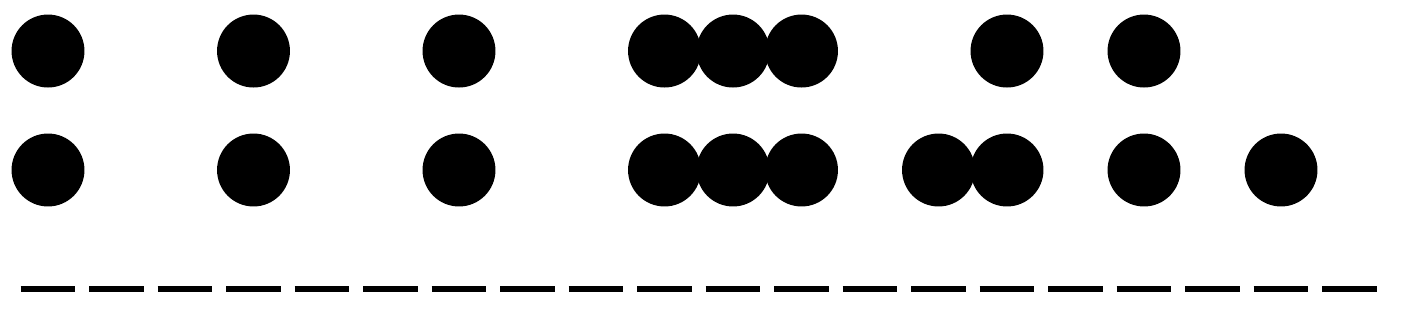}

\begin{minipage}{1.4cm}$t=50$\\[6mm]
\end{minipage}
\includegraphics[trim=24 110 24 160, clip, width=4cm]{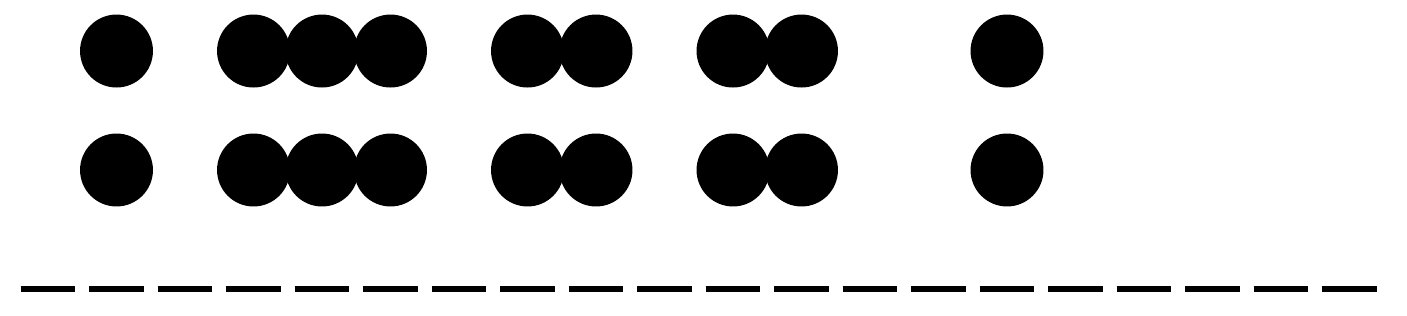}\qquad
\includegraphics[trim=24 110 24 160, clip, width=4cm]{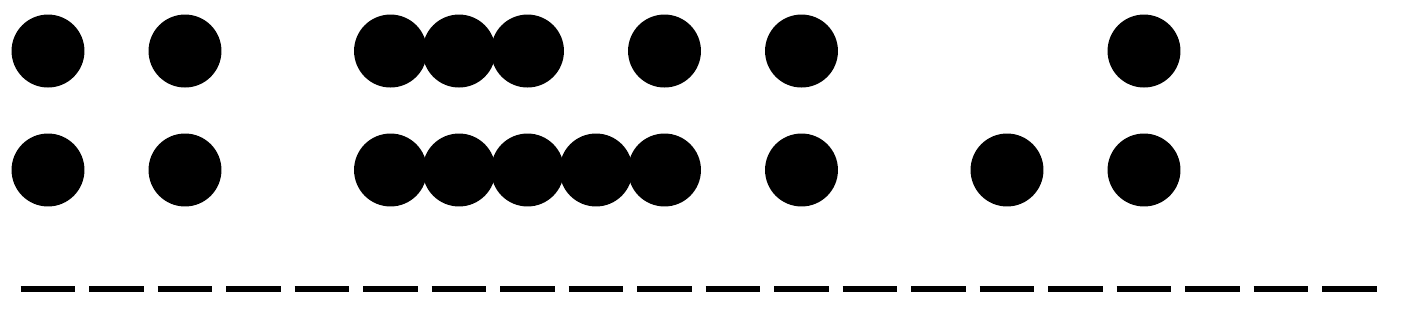}\qquad
\includegraphics[trim=24 110 24 160, clip, width=4cm]{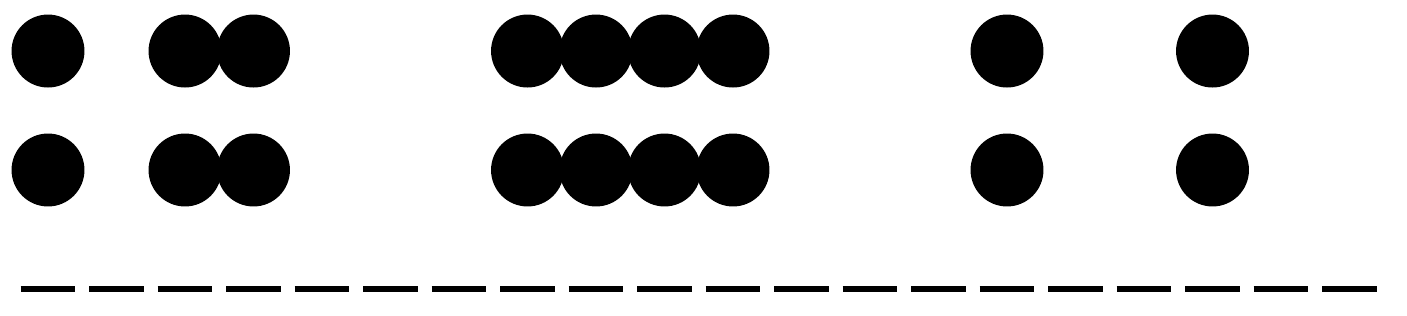}

\begin{minipage}{1.4cm}$t=60$\\[6mm]
\end{minipage}
\includegraphics[trim=24 110 24 160, clip, width=4cm]{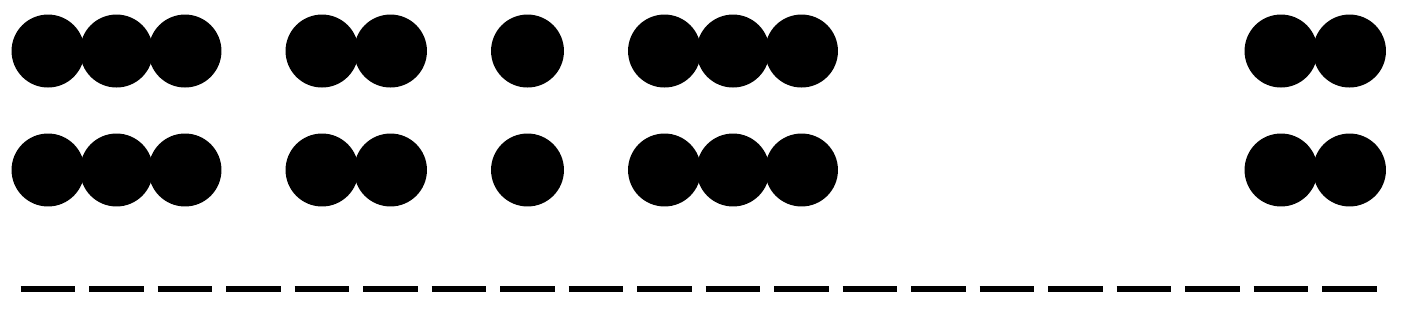}\qquad
\includegraphics[trim=24 110 24 160, clip, width=4cm]{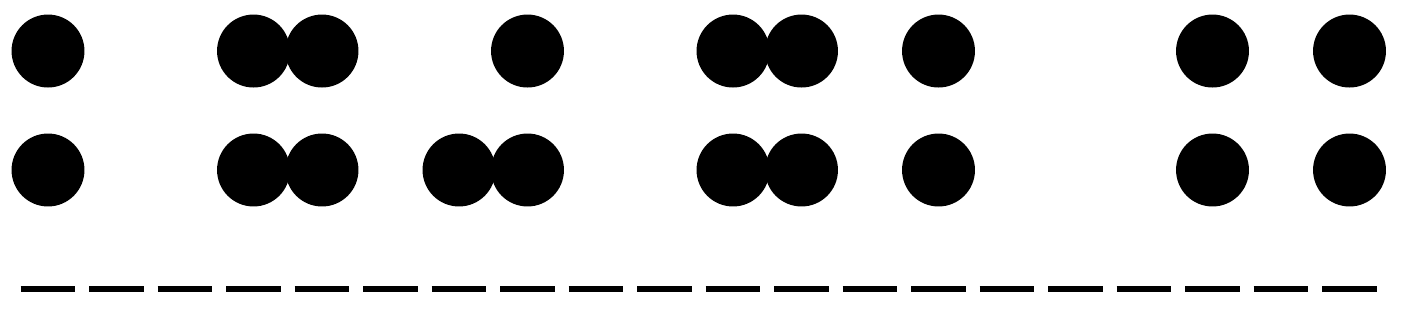}\qquad
\includegraphics[trim=24 110 24 160, clip, width=4cm]{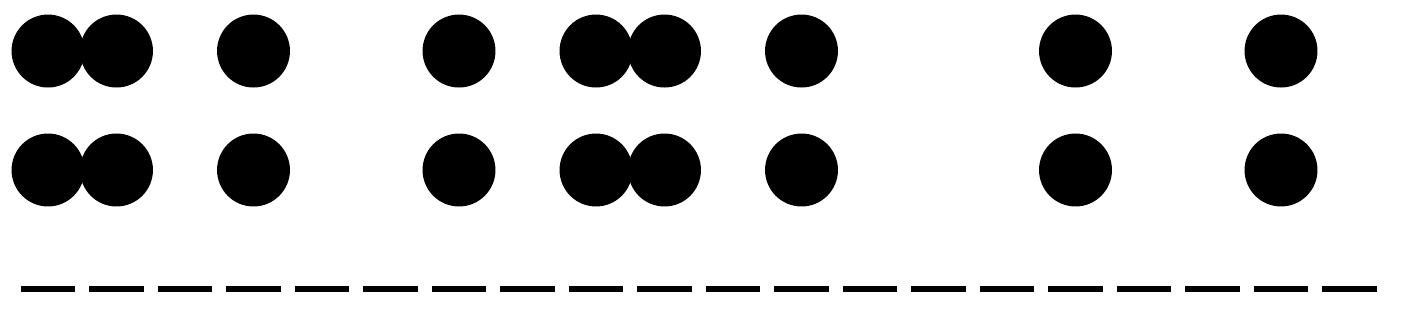}

\begin{minipage}{1.4cm}$t=70$\\[6mm]
\end{minipage}
\includegraphics[trim=24 110 24 160, clip, width=4cm]{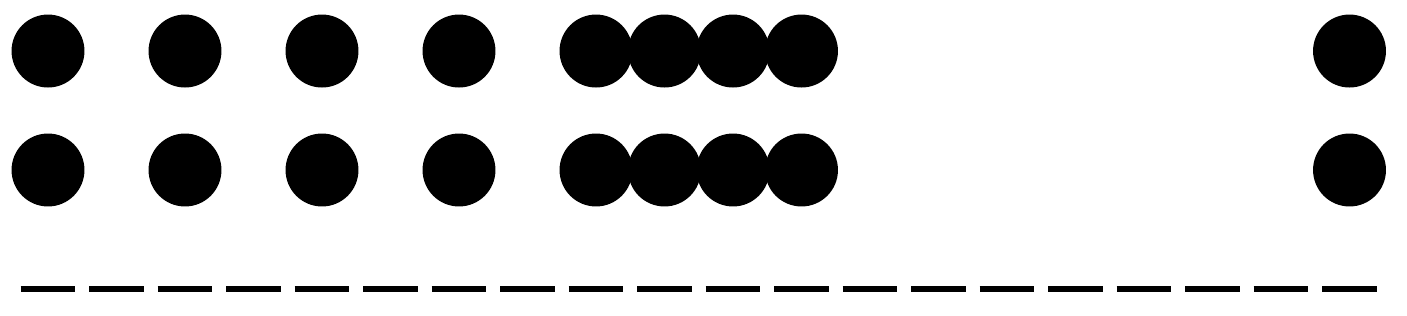}\qquad
\includegraphics[trim=24 110 24 160, clip, width=4cm]{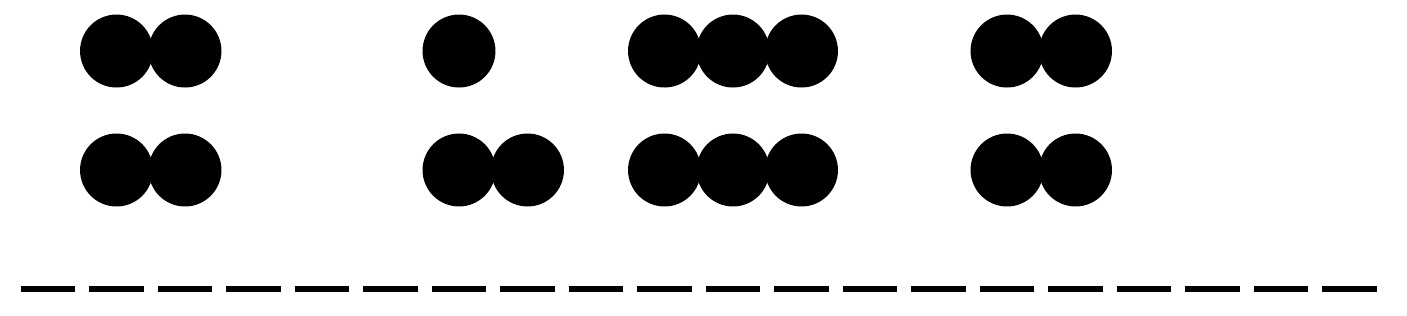}\qquad
\includegraphics[trim=24 110 24 160, clip, width=4cm]{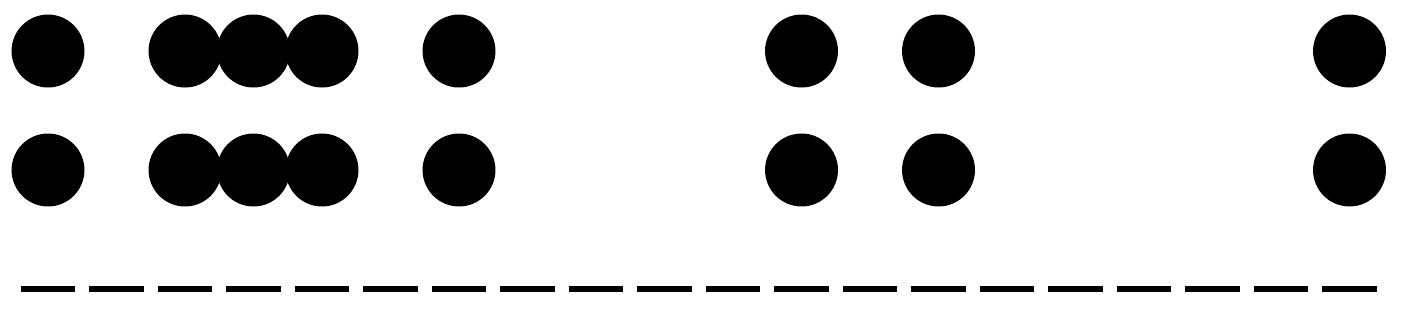}

\begin{minipage}{1.4cm}$t=80$\\[6mm]
\end{minipage}
\includegraphics[trim=24 110 24 160, clip, width=4cm]{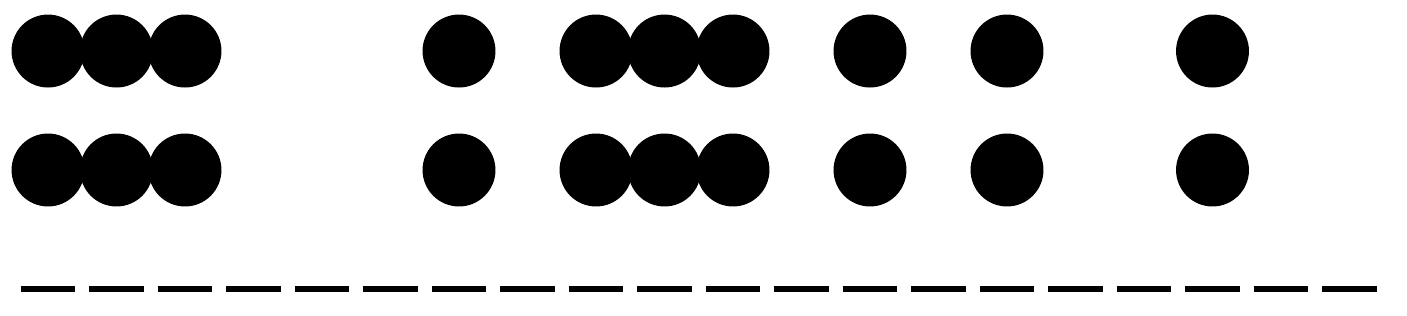}\qquad
\includegraphics[trim=24 110 24 160, clip, width=4cm]{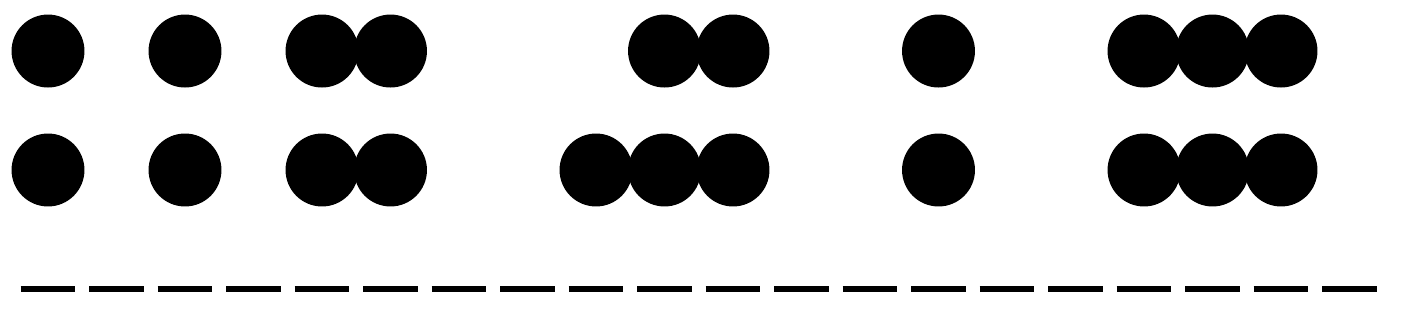}\qquad
\includegraphics[trim=24 110 24 160, clip, width=4cm]{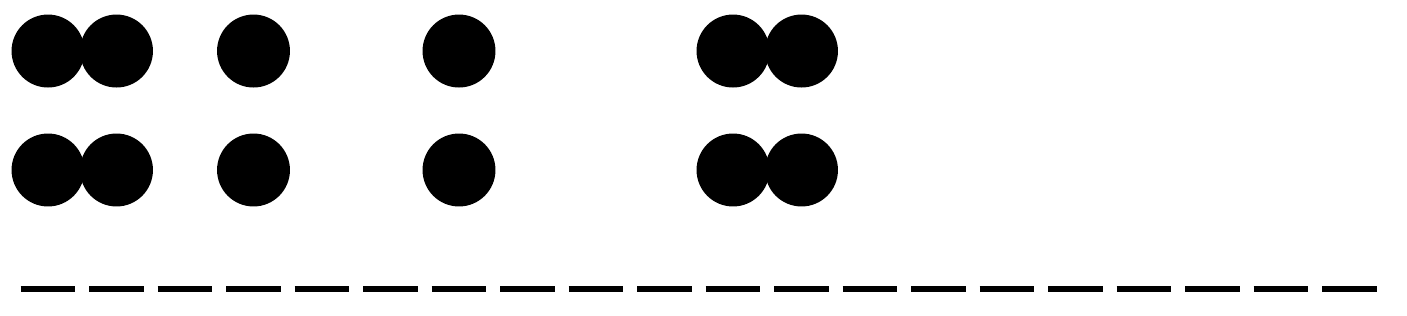}

\begin{minipage}{1.4cm}$t=90$\\[6mm]
\end{minipage}
\includegraphics[trim=24 110 24 160, clip, width=4cm]{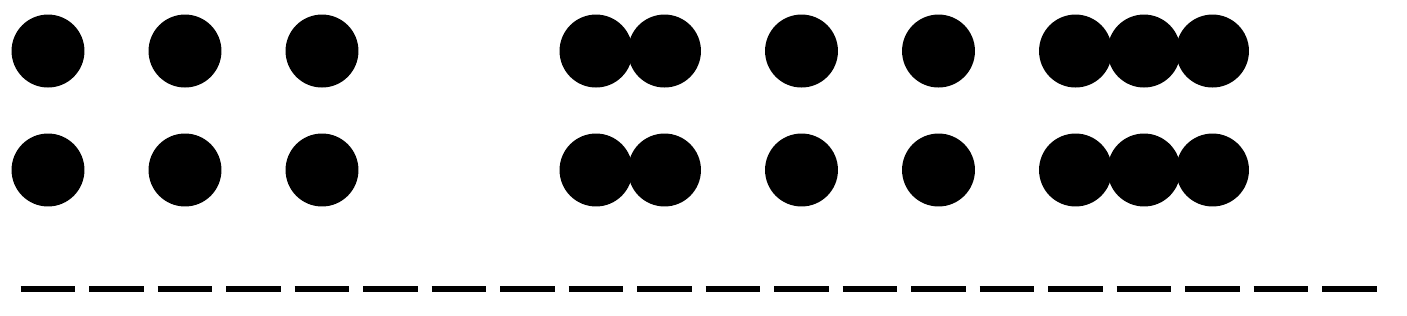}\qquad
\includegraphics[trim=24 110 24 160, clip, width=4cm]{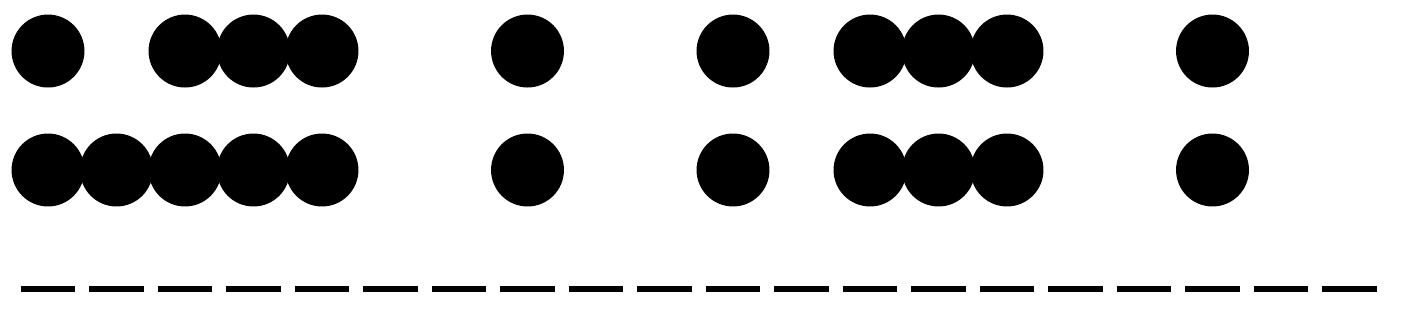}\qquad
\includegraphics[trim=24 110 24 160, clip, width=4cm]{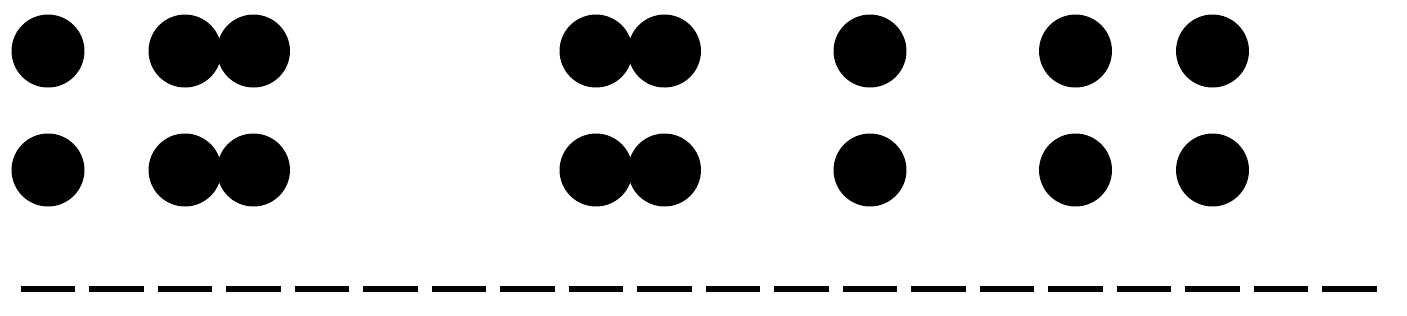}

\begin{minipage}{1.4cm}$t=100$\\[6mm]
\end{minipage}
\includegraphics[trim=24 110 24 160, clip, width=4cm]{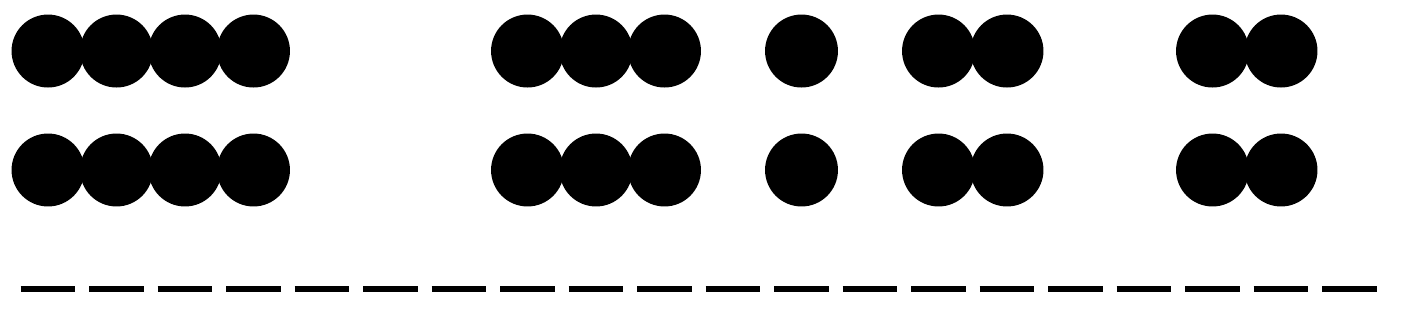}\qquad
\includegraphics[trim=24 110 24 160, clip, width=4cm]{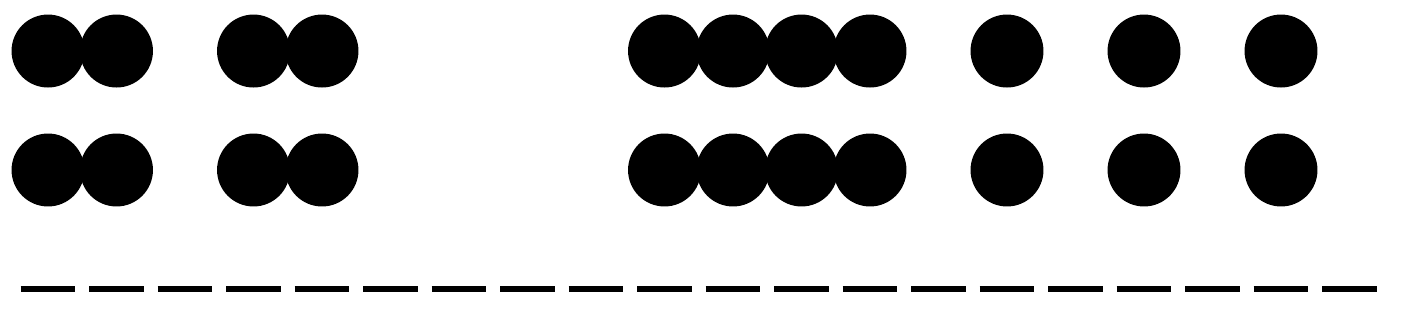}\qquad
\includegraphics[trim=24 110 24 160, clip, width=4cm]{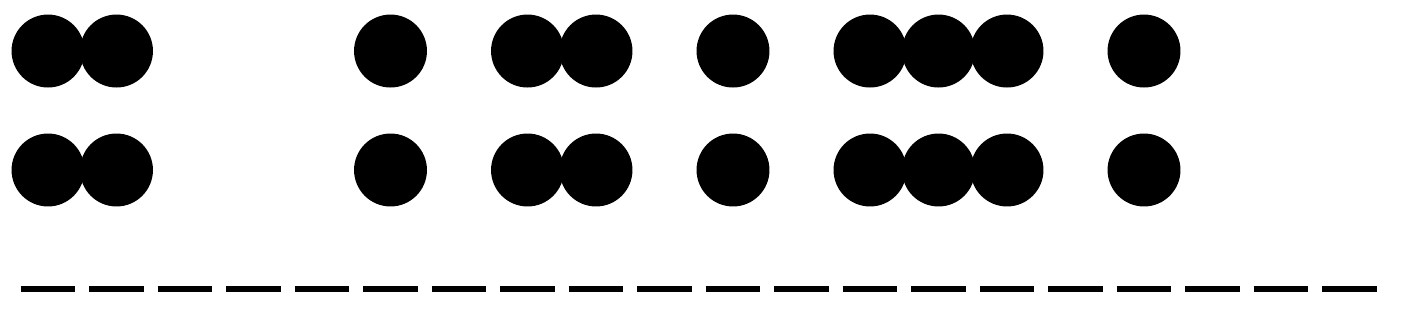}
\caption{Six trajectories of TASEP with three different random jump time sequences (left to right) and two different initial conditions (depicted on top of each other for each $t$).}
\label{fig:sim}\end{figure}

One clearly sees random attraction to a single trajectory at $t=40$ in the left column, at $t=100$ in the middle and at $t=50$ in the right column. These numbers are quite small when compared to the expected number of jumps that are needed to \TK{stumble upon} the particular jump sequence (of length 210) that was used to prove Lemma~\ref{lemma:fintime}. This number is an integer with 278 digits.

\TK{ Our second set of simulations aims at studying the  dependence of the synchronization time on the length of the chain. Here we consider the case that all interior hopping rates are equal, say $h_k=1$ for $1\leq k\leq n-1$. It is well known that the qualitative behavior of the dynamics depends significantly on the values of the entry rate $\alpha$ and of the exit rate $\beta$ (see e.g.,~\cite[Subsection 2.2.1]{solvers_guide}). In particular, one distinguishes between the phases:
\[
\begin{cases}
			\min \{ \alpha, \beta \} > \frac{1}{2}  	&	\text{ maximal-current (MC),} \\
		 \min \{ \alpha, \beta \} < \frac{1}{2},\; \alpha > \beta &\text{ high-density (HD)}, \\
		 \min \{ \alpha, \beta \} < \frac{1}{2},\; \alpha<  \beta &\text{ low-density (LD)}. 
%%%%%%%%%%%%%%%%%%%%%%%%%%%%%%%%%%%%%%%%%%%%%%									
\end{cases}
\]
}

\TK{
In the recent work~\cite{Gantert_etal_2020}, asymptotic results for mixing times are derived for simple exclusion processes with open boundaries that include the model described in the previous paragraph. More precisely, for $\varepsilon >  0$ the $\varepsilon$-mixing time $t_{\mbox{\tiny mix}}^n(\varepsilon)$ is the infimal time for which the total-variance distance between the stationary measure and the evolving probability distribution on the $2^n$ states of the chain of length $n$ is less than $\varepsilon$ no matter which initial state is chosen. Theorem~1.5 in~\cite{Gantert_etal_2020} states that in both the low-density and the high-density phases there exists a constant $C(\alpha, \beta)$ such that for all $0 < \varepsilon < 1$ one has
\begin{equation}\label{eqn:gantert}
1 \leq \liminf_{n \to \infty} \frac{t_{\mbox{\tiny mix}}^n(\varepsilon)}{n} \leq \limsup_{n \to \infty} \frac{t_{\mbox{\tiny mix}}^n(\varepsilon)}{n} \leq C(\alpha, \beta).
\end{equation}
As stated in~\cite[Theorem~1.3]{Gantert_etal_2020} this result extends to the case that either the entry or the exit is blocked, i.e., that either $\alpha=0$ or $\beta=0$. Note that the upper bound in the above inequality does not depend on $\varepsilon \in (0, 1)$. This is called the pre-cutoff phenomenon, see e.g.,~\cite[Chapter 18]{LevP17}. Observe, furthermore, that the \emph{coexistence line} $0 < \alpha = \beta < \frac{1}{2}$ where the LD and the HD phases meet is not included in either phase. This part of the phase space displays particularly interesting behavior and it is posed as an open question in~\cite{Gantert_etal_2020} what the large $n$ behavior of the mixing times is on this line segment.
Moreover, Conjecture~1.8 of~\cite{Gantert_etal_2020} states that in the maximal-current phase and on its boundary, i.e.~$\min \{ \alpha, \beta \} \geq \frac{1}{2}$, the mixing time is of order $n^{3/2}$ as opposed to order $n$ in the LD/HD phases. This conjecture is complemented in~\cite{Gantert_etal_2020} by an upper bound $t_{\mbox{\tiny mix}}^n(\varepsilon) \leq C n^3 \log(n)$ for the triple point $\alpha = \beta = \frac{1}{2}$ where, again, the constant $C$ does not depend on $\varepsilon$.
}

\TK{
One useful tool to obtain upper bounds for mixing times is the following connection to synchronization times: Denote by $\tau^{(n)}$ the random variable that equals the infimal time when the trajectories starting from the empty chain and from the fully occupied chain, both of length $n$, synchronize. If its tail probability satisfies $\mathbb P(\tau^{(n)} \geq s) \leq \varepsilon$ for some $s \geq 0$ then the $\varepsilon$-mixing time is bounded by $t_{\mbox{\tiny mix}}^n(\varepsilon) \leq s$ (see e.g.,~\cite[Lemma~2.2]{Gantert_etal_2020}).
}

\TK{
We now present numerical simulations that provide evidence in support of Conjecture~1.8 in~\cite{Gantert_etal_2020}, at least for most parts of the maximal-current phase, and that shed some light on the $n$-dependence of the mixing time for points on the coexistence line $0 < \alpha = \beta < \frac{1}{2}$. To this end we have computed for 35 different values of $(\alpha, \beta)$ the expectation of the synchronization time $\tau^{(n)}$ using Monte-Carlo simulations. In order to study the dependence on the length of the chain we have picked $40$ different values of chain lengths $n$ ranging from $n=11$ to $n=160$ that were used for all choices of $(\alpha, \beta)$. For each value of $n$, $\alpha$, and $\beta$ we have approximated the expectation $\mathbb E(\tau^{(n)})$ using 7,200 runs. As we explain in more detail below we found for all values of $(\alpha, \beta)$ that $\mathbb E(\tau^{(n)})$ is well described by a power law $C n^{\gamma}$. Using Markov's inequality $\mathbb P(\tau^{(n)} \geq s) \leq \frac{1}{s} \mathbb E(\tau^{(n)})$ one immediately obtains a bound on the mixing times, c.f.~\cite[Corollary~5.5]{LevP17}
\begin{equation}\label{eqn:mixing_bound}
t_{\mbox{\tiny mix}}^n(\varepsilon) \leq \frac{1}{\varepsilon} \mathbb E(\tau^{(n)}) \sim \frac{C}{\varepsilon} n^{\gamma}.
\end{equation}
Clearly, Markov's inequality is too crude to analyze   the phenomenon of cutoff or pre-cutoff. Nevertheless, it appears reasonable to extract the  exponents $\gamma$ of the power laws for the expectations and we do so by linear regression of $\log \mathbb E(\tau^{(n)})$ as a function of $\log n$. Our numerical results are depicted in~Figure~\ref{fig:exponents}. As one can see there we have only chosen points in the $(\alpha, \beta)$-plane that lie on or below the diagonal $\alpha = \beta$. The reason for this is that one may interchange the roles of $\alpha$ and $\beta$ by the particle-hole duality which leaves the expectations $\mathbb E(\tau^{(n)})$ invariant.
}
%%%%%%%%%%%%%%%%%%%%%%%%%%%%%
\begin{figure}
%%%%%%%%%%%%%%%%
        \centering
				   \includegraphics[scale=0.9,trim=4cm 9cm 3cm 10cm,clip]{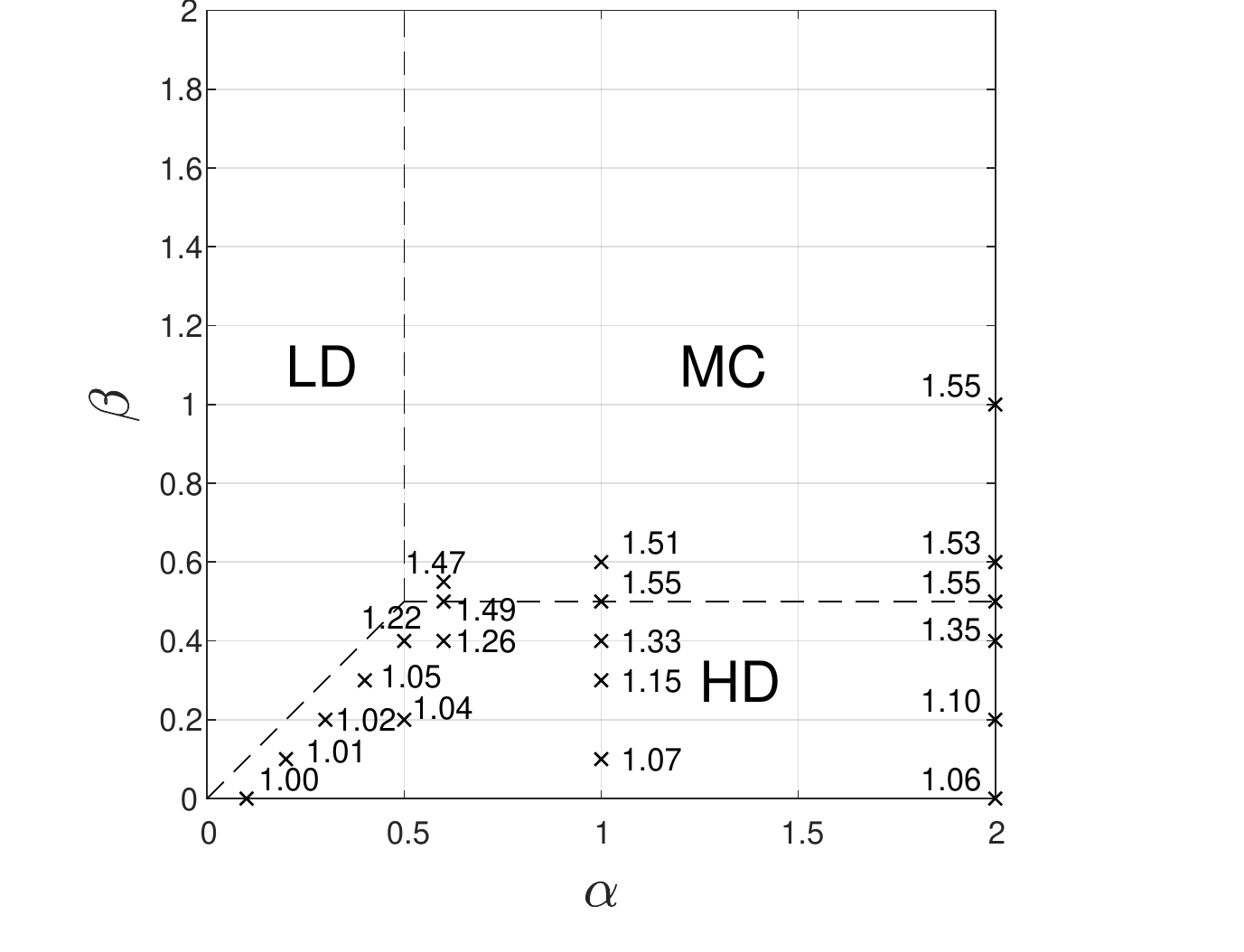}
        \\ \vspace*{1cm}
      				   \includegraphics[scale=0.9,trim=4cm 9cm 3cm 10cm,clip]{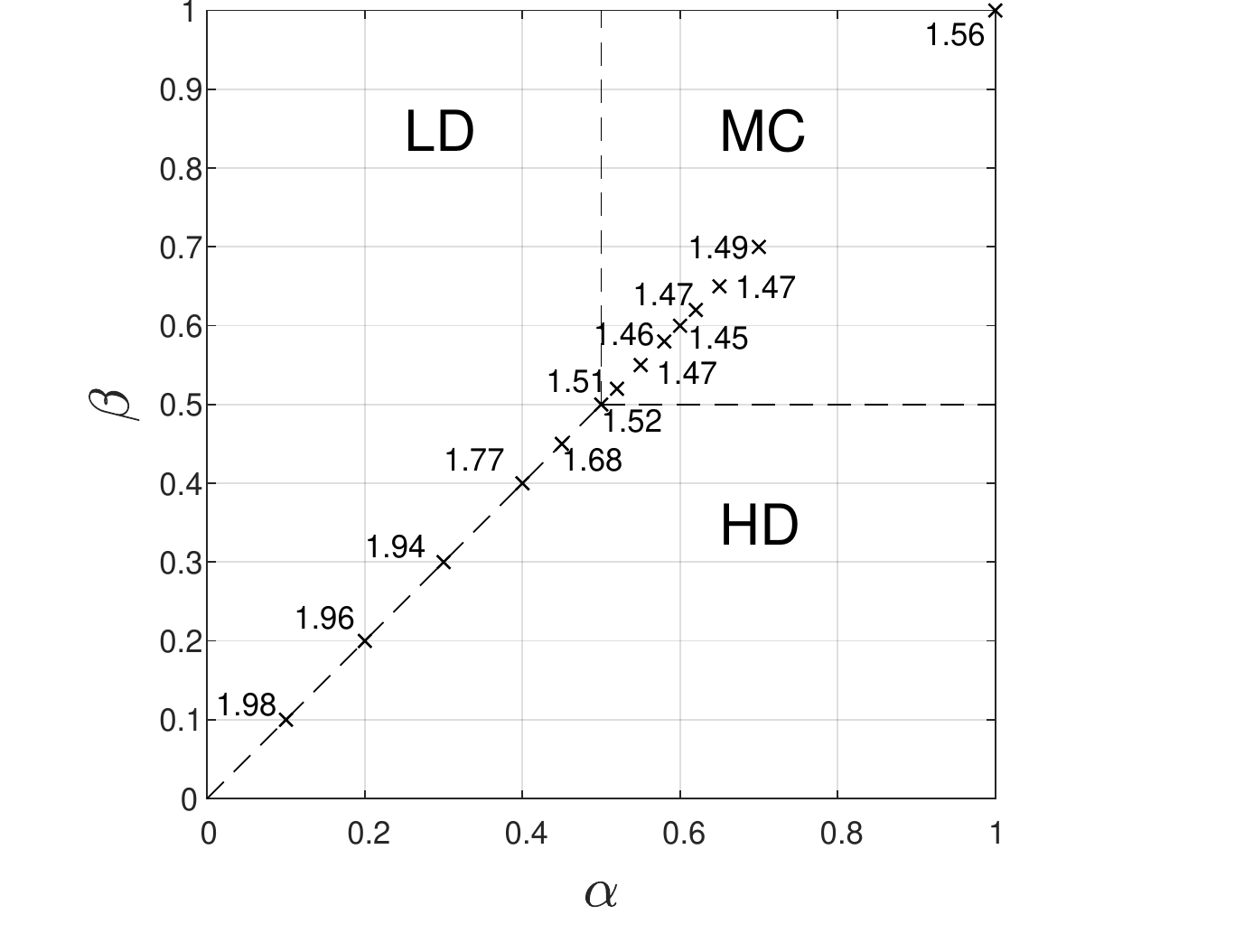}
   \caption{\TK{Numerical approximations for the exponent~$\gamma$ in the power
 law~$\mathbb E(\tau^{(n)}) \sim C n^{\gamma}$ for the expectation of the synchronization time~$\tau^{(n)}$. These were obtained for 35 points in the~$(\alpha, \beta)$-plane by Monte-Carlo simulations with 7,200 runs for each of the 11 chain lengths $n=80+8k$ with $k \in \{0,1,\ldots,10\}$.
Top: $\alpha>\beta$; Bottom $\alpha=\beta$. }}
\label{fig:exponents}
%%%%%%  CAN USE FBOX TO SEE THE TRIM VIA \fbox{\includegraphics[scale=0.9,trim=4cm 9cm 3cm 10cm,clip]{exp_off_diag}}
%%%%%
\end{figure} 

\TK{
Let us first look at the high-density phase in~Figure~\ref{fig:exponents} ($0 < \beta < \min \{\alpha, \frac{1}{2} \}$). The exponents are all $\geq 1$. Hence, the 
 bound~\eqref{eqn:mixing_bound} is consistent with the result \eqref{eqn:gantert} of~\cite{Gantert_etal_2020}. From the displayed exponents one is led to think that the
 bound~\eqref{eqn:mixing_bound} on the mixing times worsens as one approaches the maximal-current phase MC. This, however, might be a transient effect that disappears when $n$ tends to infinity. In fact, we generally observed that the exponents are mildly decreasing with $n$, with the notable exception of points on the coexistence line $0 < \alpha = \beta < \frac{1}{2}$. The dependence of the exponent~$\gamma$ on the chain length~$n$ is more pronounced in the high-density phase as one moves closer to the maximal-current phase. In order to demonstrate this phenomenon we have computed the exponents by linear regression for three different regions of chain lengths, $11 \leq n \leq 160$,  $40 \leq n \leq 160$, and $80 \leq n \leq 160$. The corresponding numbers are recorded in the tables in the Appendix. 
The numbers presented in~Figure~\ref{fig:exponents} are taken from the last segment $80 \leq n \leq 160$. 
}

\TK{
The dependence of the exponent on the length of the chain makes one wonder whether the expectations $\mathbb E(\tau^{(n)})$ are well described by a power law at all. We believe that this is the case. One indication for this is that in all of our experiments the deviation of $\mathbb E(\tau^{(n)})$ from $C n^{\gamma}$ did not exceed $1.6\%$ in the segment $80 \leq n \leq 160$. We refer the interested reader to the Appendix for a more detailed presentation of our numerical simulations where we also discuss the error that is introduced by the Monte-Carlo approximation of the expectation.
}
\subsection*{\TK{Summary of our findings}} 

\TK{
Conjecture~1.8~of~\cite{Gantert_etal_2020} which states that in the maximal current-phase mixing times are of order $n^{3/2}$ as $n$ tends to infinity is compatible with our simulations, except possibly for a region near the triple point $\alpha = \beta = \frac{1}{2}$. For example, at $\alpha = \beta = 0.6$
we obtain $\gamma = 1.455$. There are two reasons why we believe that this is not an artefact of the Monte-Carlo approximation. Firstly, we ran in addition a simulation with 28,800 runs for $\alpha = \beta = 0.6$ and obtained $\gamma = 1.458$. Secondly, Figure~\ref{fig:exponents} shows that the exponent falls slightly but consistently below $\frac{3}{2}$ for values of  $\alpha$  and $\beta$ that are close to $0.6$. Of course, this is by no means a contradiction to the conjecture in~\cite{Gantert_etal_2020}, as it might be a transient phenomenon that disappears for longer chains. 
}

\TK{
In regard to the behavior along the coexistence line that separates the high-density and the low-density phases ($0 < \alpha = \beta < \frac{1}{2}$),   addressed as Question~1.6 in~\cite{Gantert_etal_2020}, we see that the exponents of the power laws for the expectations $\mathbb E(\tau^{(n)})$ take values significantly above $\frac{3}{2}$,
 but remain below~$2$ and appear to be monotonically decreasing with~$\alpha = \beta$.
}

%We have also conducted some Monte-Carlo simulations to investigate the dependence of the synchronization time on the length of the chain. Using the initial conditions displayed in Figure~\ref{fig:sim}, i.e.~the first trajectory starts with all sites empty, except for the first one, and the second trajectory starts with all sites occupied, we performed 5000 runs for each chain where the number of sites was varied from $N=10$ to $N=200$ at steps of $10$. We found that the average number of jump attempts needed for synchronization is well described by $C  N^\gamma$ with constant $C\approx 0.73$ and exponent $\gamma\approx 2.56$. For $N=20$ this formula gives approximately $1560$ jump attempts which is very close to the actual average of about 1550 attempts. Since there are $21$ clocks ringing, each at rate $1$, this translates to an average synchronization time of approximately $74$ units. Preliminary further investigations for different pairs of initial conditions indicate that such a power law always holds but that the constant $C$ and the exponent $\gamma$ both depend on the pair of initial conditions.

\section{Extensions}\label{sec:extensions}
%\section{Extension to ASEP}\label{sec:extensions}
%%%%%%%%%%%%%%%%%%%%%%%%%%%%%%%%%%%%%%%%%%%%%%%%%%%%%%%%
The reasoning used in this paper can be extended or adapted to various other models. One example where this is possible
is the asymmetric simple exclusion process~(ASEP). 
ASEP follows the same rules as~TASEP except that the particles can hop in
both directions, see, e.g.,~\cite{TraW08}. One can model ASEP as an RDS either via two Poisson processes for each site, one for jump attempts to the left and one to the right, or via an additional sequence of random variables $d_i(\omega)\in\{-1,1\}$, where $-1$ denotes a hop to the left and $1$ to the right. In both cases, we need to add a Poisson process for modelling particles that enter the chain from the right.

The decisive consequence this extension of the model has on our formalism is that the jump order sequences $(k_i)_{i\in\Z}$ now take values in $\{-n-1,\ldots,-1,0,1,\ldots,n\}$. Here, $k=1,\ldots,n$ represents particles hopping from site $k$ to the right, $k=-1,\ldots,-n$ represents particles hopping from site $-k$ to the left and $k=0$ and $k=-n-1$ represent particles entering from the left or from the right, respectively, into the chain. As before, on a finite interval each finite jump order sequence occurs with positive probability that only depends on the length of the interval. Hence, Lemma \ref{lemma:fintime} remains valid (although the probability $\delta$ for the sequence constructed in the proof becomes smaller). Since all subsequent results in this paper build solely upon Lemma~\ref{lemma:fintime}, they thus remain valid if TASEP is replaced by ASEP.

Another modification would be to consider TASEP with time-periodic and, say, continuous transition rates. Then the construction of the RDS needs to be modified as the Poisson processes are not homogeneous any more and have no representation as partial sums of exponentially distributed variables. For such a construction the method described in \cite[Section 2.5]{Kingman} appears to be useful. The proof of the central Lemma~\ref{lemma:fintime}, however, can be easily adapted, as it only requires that there is a positive lower bound on each transition rate. Even in the case that transition rates are temporarily zero (but not identically equal to zero) one may prove Lemma~\ref{lemma:fintime} in the periodic case for times $t$ large enough (\LG{which would still be enough to satisfy property (1) of Theorem \ref{thm:singleton}}, cf.~Remark~\ref{rem:fintime}) as one might need to wait for several periods to realize the jump sequence prescribed in the proof of Lemma~\ref{lemma:fintime}.

\section{Conclusion} 
%%%%%%%%%%%%%%%%%%%%%%%%%%%%%%%%%
In this paper we have shown that random attraction to a single trajectory occurs almost surely in the TASEP model with finite chain length. This implies the existence of a random attractor that almost surely consists of single trajectories. 

For the many dynamical systems and processes that were modeled using TASEP, ranging from mRNA translation to vehicular traffic,
 this implies that in the long run these systems are insensitive to the initial conditions. Alternatively, this may be interpreted as saying that a perturbation of the state of the system   is ``filtered out''.

In order to rigorously prove our main result 
  we first  reformulated TASEP as a random dynamical system (RDS), which is a contribution of independent interest. Our construction of an RDS relies on the definition of jump times generated via generalized Poisson processes that are attached to the sites of the chain. This modelling is consistent with the fact that the exponential transition rates in TASEP are usually chosen site dependent. We have, moreover, shown 
  \TK{that random attraction to a single trajectory is a stronger property than contractiveness for the corresponding evolution of probability distributions. Rather, it is a consequence of the fact
  }
  %, that neither the finiteness of the state space nor the irreducibility of TASEP are the
%decisive factors for the proven attraction. 
 that there exists a sequence of clock ticks, with non zero probability, that transfers the system to the same final state from \emph{any} initial state.

\section{Appendix} 
%%%%%%%%%%%%%%%%%%%%%%%%%%%%%%%%%

\TK{
As  described in Section \ref{sec:NumSim} the goal of the second set of numerical simulations was to test the hypothesis that the expectation $\mathbb E(\tau^{(n)})$ of  the synchronization time $\tau^{(n)}$ between the initially empty and the fully occupied chain of length $n$ obeys asymptotically (in $n$) a power law of the form $C n^{\gamma}$ and to determine the value of the exponent. As conjectured in~\cite{Gantert_etal_2020} for the corresponding mixing times, the value of the exponent should depend on the values of the entry rate $\alpha$ and the exit rate $\beta$. More precisely, it should depend on the location of $(\alpha, \beta)$ in the phase diagram, see Figure~\ref{fig:exponents}. 
}

\TK{
For the simulations we have used for each value of the parameters $(\alpha, \beta)$ 40 different chain lengths, varying from 11 to 20 in steps of 1, from 20 to 40 in steps of 2, from 40 to 80 in steps of 4, and from 80 to 160 in steps of 8. For each of these values we performed 7,200 runs to approximate the expectation $\mathbb E(\tau^{(n)})$ of the synchronization time. The values for the constant $C$ and for the exponent $\gamma$ of the power law were then obtained by linear regression of $\log \mathbb E(\tau^{(n)})$ as a function of $\log n$. Due to the available computer power it was feasible to consider chains up to length 160 for which the number of jump attempts is of the order of 1,000,000 before synchronization occurs. It turned out that for some choices of $(\alpha, \beta)$ in the phase plane chain length $n=160$ is too small to provide an asymptotic result. In order to get some idea how close we are to the asymptotic regime (assuming that this  exists) we have computed the exponent for three different regimes: $\gamma_0$ denotes the value obtained from the 11 values of our chosen chain lengths that lie in $[80,160]$,
$\gamma_1$ is the value obtained from the 21 values in $[40,160]$, and $\gamma_2$ denotes the value obtained from all 40 chain lengths. Looking at the numbers presented in Tables~\ref{table:numbers_offdiagonal} and~\ref{table:numbers_diagonal}, one sees that the exponents decrease with growing chain length with the notable exception of the coexistence line between HD and LD. Moreover, the differences between $\gamma_0$ and $\gamma_1$ are quite small except for the points in the high-density phase that are close to the maximal-current phase.
}

\TK{
In order to gain some insight on how good the Monte-Carlo approximations are, the 7,200 runs were split into 9 runs of 800 trials each and the empirical standard deviation of the resulting 9 values was computed. The such obtained numbers are denoted by $\sigma_j$, where $j \in \{0,1,2\}$ indicates the segment of chain lengths used for the computation.
}

\TK{
 The numerical results we have described so far provide  
 little information on 
 the quality of the approximation of the 
expectations~$\mathbb E(\tau^{(n)})$ by the power law. As a measure to judge this quality we have computed $\Delta_j$ as the maximum of the relative deviations of  $\mathbb E(\tau^{(n)})$ from $C_j n^{\gamma_j}$,
 where the maximum is taken over all values of $n$ that lie in the relevant segments, i.e., in $[80,160]$ for $j=0$, in $[40,160]$ for $j=1$, and in $[11,160]$ for $j=2$. From these numbers one may deduce that the power law is indeed a good approximation. The maximal relative error $\Delta_0$ lies between $0.2\%$ and $1.6\%$ for all 35 values of $(\alpha, \beta)$. We consider the corresponding exponents $\gamma_0$ as our best justified approximations to the asymptotic value of the exponent (if it exists) and this is why they are presented in Figure~\ref{fig:exponents}. In addition, we list the values of the corresponding constants $C_0$ in Tables~\ref{table:numbers_offdiagonal} and~\ref{table:numbers_diagonal}. Note that only the maximal relative errors $\Delta_2$ may become unsatisfactorily large, going up to $19\%$. However, all larger values of $\Delta_2$ may be explained by the discrepancy between $\gamma_0$ and $\gamma_2$. In these cases one would need to consider longer chains in order to test the power law and to obtain realistic values for the exponents.
}

\begin{table}[htp] 
%%%%%%%%%%%%%%%%%%%%%%%
\begin{tabular}{ | p{0.6cm}|p{0.6cm}||p{2.3cm}|p{0.8cm}|p{0.8cm}||p{2.3cm}|p{0.8cm}||p{2.3cm}|p{0.8cm}|  }
\hline
\multicolumn{9}{|c|}{HD: $0\leq\beta < 1/2$ and~$\beta<\alpha$} \\
%%%%%%%%%%%%%%%
\hline
%%%%%%%%%%%%%%%%%%
\multirow{2}{*}{$\alpha$}& 
  \multirow{2}{*} {$\beta$}&
	\multicolumn{3}{c||}{ $80 \leq n\leq 160$ }     &
\multicolumn{2}{c||}{ $40 \leq n\leq 160$ }  &
\multicolumn{2}{c|}{ $11 \leq n\leq 160$ }   \\
\cline{3-9}
%\cmidrule{3-9}
%\midrule
%%%%%%%%%%
& &  
 \Centering $\gamma_0\pm \sigma_0$ &
\Centering $\Delta_0$ &
\Centering $C_0$&
\Centering $\gamma_1\pm \sigma_1$ & 
\Centering$\Delta_1$ & 
\Centering$\gamma_2\pm \sigma_2$ & 
\Centering$\Delta_2$ \\
%%%%%
\hline
$0.1$ & $0.0$ & $1.001 \pm 0.004 $ &$0.2\%$ & $11.1$ &$ 1.003 \pm 0.001$ & $ 0.4\%$ & $1.003\pm 0.001$ & $0.5\%$\\
\hline
%%%%%%%%%%%%%%%%%%%%%%%%%
$0.2$ & $0.1$ & $1.008 \pm 0.009 $ &$0.3\%$ & $10.9$ &$ 1.012 \pm 0.002$ & $ 0.6\%$ & $1.036\pm 0.002$ & $3.2\%$\\
\hline
%%%%%%%%%%%%%%%%%%%%
$0.3$ & $0.2$ & $1.024 \pm 0.011 $ &$0.4\%$ & $10.4 $ &$ 1.034 \pm 0.005$ & $ 0.7\%$ & $1.097\pm 0.003$ & $12\%$\\
\hline
%%%
$0.4$ & $0.3$ & $1.045 \pm 0.013 $ &$0.3\%$ & $10.3$ &$ 1.078 \pm 0.005$ & $ 2.5\%$ & $1.226\pm 0.003$ & $19\%$\\
\hline
%%%%%%%%%%%%%%%%
%%%
$0.5$ & $0.2$ & $1.043 \pm 0.008 $ &$0.4\%$ & $5.11$ &$ 1.053 \pm 0.003$ & $ 0.7\%$ & $1.091\pm 0.002$ & $6.1\%$\\
\hline
%%%%%%%%%%%%%%%%%%%%%%%%%%%%%%%%%%%%%%%%%%%%%%%%%
$0.5$ & $0.4$ & $1.222 \pm 0.012 $ &$0.8\%$ & $5.36$ &$ 1.306 \pm 0.005$ & $ 4.6\%$ & $1.441\pm 0.002$ & $12\%$\\
\hline
%%%%%%%%%%%%%%%%%
$0.6$ & $0.4$ & $1.260 \pm 0.014 $ &$0.7\%$ & $3.67$ &$ 1.316 \pm 0.008$ & $ 2.8\%$ & $1.417\pm 0.002$ & $8.9\%$\\
\hline
%%%%%%%%%%%%%%%%%%%
$1.0$ & $0.1$ & $1.068\pm 0.003 $ &$0.2\%$ & $3.25$ &$ 1.084 \pm 0.001$ & $ 0.9\%$ & $1.131\pm 0.001$ & $6.2\%$\\
\hline
%%%%%%%%%%%%%%%%%%%%%%%%%%%%%%%
$1.0$ & $0.3$ & $1.150\pm 0.006 $ &$0.5\%$ & $3.86$ &$ 1.183\pm 0.004$ & 
$ 2.8\%$ & $1.296\pm 0.001$ & $13\%$\\
\hline
%%%%%%%%%%%%%%%%%%%%%%%%%%%%%%%
$1.0$ & $0.4$ & $1.328\pm 0.017 $ &$0.6\%$ & $2.45$ &$ 1.393\pm 0.003$ & 
$ 3.2\%$ & $1.490\pm 0.002$ & $9.0\%$\\
\hline
%%%%%%%%%%%%%%%%%%%%%%%%%%%%%%%
$2.0$ & $0.0$ & $1.058\pm 0.002 $ &$0.2\%$ & $2.78$ &$ 1.075\pm 0.001$ & 
$ 0.9\%$ & $1.124\pm 0.001$ & $6.5\%$\\
\hline
%%%%%%%%%%%%%%%%%%%%%%%%%%%%%%%
%%%%%%%%%%%%%%%%%%%%%%%%%%%%%%%
$2.0$ & $0.2$ & $1.102\pm 0.006 $ &$0.3\%$ & $3.50$ &$ 1.127\pm 0.002$ & 
$ 1.4\%$ & $1.213\pm 0.002$ & $13\%$\\
\hline
%%%%%%%%%%%%%%%%%%%%%%%%%%%%%%%
%%%%%%%%%%%%%%%%%%%%%%%%%%%%%%%
$2.0$ & $0.4$ & $1.346\pm 0.014 $ &$0.9\%$ & $2.22$ &$ 1.409\pm 0.004$ & 
$ 3.5\%$ & $1.524\pm 0.002$ & $11\%$\\
\hline
%%%%
\multicolumn{9}{|c|}{MC without $\{\alpha=\beta\}$ : $ \beta \geq  1/2$ and~$\beta<\alpha$} \\
%%%%%%%%%%%%%%%%%%%%
\hline
%%%%%%%%%%%%%%
$0.6$ & $0.5$ & $1.493\pm 0.014 $ &$0.6\%$ & $1.39$ &$ 1.494\pm 0.008$ & 
$ 0.8\%$ & $1.521 \pm 0.003$ & $3.2\%$\\
\hline
%%%%%%%%%%%%%%%%%%%%%%%%
$0.6$ & $0.55$ & $1.469 \pm 0.017 $ &$0.8 \%$ & $1.44$ &$ 1.475 \pm 0.010$ & 
$ 1.1\%$ & $1.508 \pm 0.005$ & $3.7\%$\\
\hline
%%%%%%
%%%%%%%%%%%%%%%%%%%%%%%%
$1.0$ & $0.5$ & $1.553 \pm 0.021 $ &$0.9 \%$ & $0.93$ &$ 1.552 \pm 0.009$ & 
$ 1.0\%$ & $1.574 \pm 0.004$ & $2.3\%$\\
\hline
%%%
%%%%%%%%%%%%%%%%
$1.0$ & $0.6$ & $1.506 \pm 0.028 $ &$0.8 \%$ & $1.02$ &$ 1.513 \pm 0.011$ & 
$ 0.9\%$ & $1.547 \pm 0.004$ & $3.6\%$\\
\hline
%%%%%%%%%%%%%%%%%%%%%%%%%%%%%%%
%%%%%%%%%%%%%%%%
$2.0$ & $0.5$ & $1.550 \pm 0.022 $ &$0.9 \%$ & $0.94$ &$ 1.565 \pm 0.007$ & 
$ 1.2\%$ & $1.610 \pm 0.002$ & $5.6\%$\\
\hline
%%%
%%%%%%%%%%%%%%%%
$2.0$ & $0.6$ & $1.526 \pm 0.032 $ &$0.8 \%$ & $0.91$ &$ 1.531 \pm 0.014$ & 
$ 1.2\%$ & $1.580 \pm 0.004$ & $6.0\%$\\
\hline
%%%
%%%%%%%%%%%%%%%%
$2.0$ & $1.0$ & $1.554 \pm 0.021 $ &$1.0 \%$ & $0.72$ &$ 1.574 \pm 0.009$ & 
$ 1.1\%$ & $1.622 \pm 0.002$ & $5.1\%$\\
\hline   
%%%
\end{tabular} \vspace*{0.3cm}
%%%%%%%%%%%%%%%%%%%%
\caption{\label{table:numbers_offdiagonal} \TK{The values $\gamma_j$, $\sigma_j$, $\Delta_j$ ($j=0,1,2$) and $C_0$ when $\beta < \alpha$.}}
%%%%%
\end{table}
  
	%%%%%%%%%%%%%% SECOND TABLE %%%%%%%%%%%%%%%%%%%%%%%%%%%
	
\begin{table}[htp]
%%%%%%%%%%%%%%%%%%%%%%%
\begin{tabular}{  |p{0.7cm}||p{2.3cm}|p{1cm}|p{0.8cm}||p{2.3cm}|p{0.8cm}||p{2.3cm}|p{0.8cm}|  }
\hline
\multicolumn{8}{|c|}{ Coexistence line between HD and LD: 
$0<\alpha=\beta<1/2$} \\
%%%%%%%%%%%%%%%
\hline
%%%%%%%%%%%%%%%%%%
 \hspace*{-.25cm}\multirow{2}{*}{ $\alpha=\beta$}& 
	\multicolumn{3}{c||}{ $80 \leq n\leq 160$ }       &
\multicolumn{2}{c||}{ $40 \leq n\leq 160$ }  &
\multicolumn{2}{c|}{ $11 \leq n\leq 160$ }  \\
\cline{2-8}
%\midrule
%%%%%%%%%%
&  \Centering $\gamma_0\pm \sigma_0$ & \Centering$\Delta_0$ &\Centering $C_0$&\Centering
$\gamma_1\pm \sigma_1$ &
 \Centering$\Delta_1$ & 
\Centering $\gamma_2\pm \sigma_2$ &
\Centering $\Delta_2$ \\
%%%%%
\hline
%%%%%%%%%%%%%%
$0.1$ &  $1.982 \pm 0.046 $ &$1.1\%$ & $2.46$ &$ 1.974 \pm 0.010$ & $ 1.1\%$ & $1.953\pm 0.004$ & $4.3\%$\\
\hline
%%%%%%%%%%%%%%%%%%%%%%%%%
$0.2$ &  $1.963 \pm 0.028 $ &$1.1\%$ & $1.16$ &$ 1.958 \pm 0.010$ & $ 1.6\%$ & $1.915\pm 0.004$ & $4.8\%$\\
\hline
%%%%%%%%%%%%%%%%%%%%%%%%%
$0.3$ &  $1.936 \pm 0.025 $ &$1.3\%$ & $0.70$ &$ 1.895 \pm 0.010$ & $ 2.0\%$ & $1.842\pm 0.003$ & $4.6\%$\\
\hline
%%%%%%%%%%%%%%%%%%%%%%%%%%%%%%%%%%%%
$0.4$ &  $1.770\pm 0.014 $ &$1.4\%$ & $0.84$ &$ 1.754 \pm 0.008$ & $ 1.8\%$ & $1.725\pm 0.003$ & $2.7\%$\\
\hline
%%%%%%%%%%%%%%%%%%%%%%%%%%%%%%%%%%%%
%%%%%%%%%%%%%%%%%%%%%%%%%%%%%%%%%%%%
$0.45$ &  $1.676\pm 0.024 $ &$0.9\%$ & $0.96$ &$ 1.659 \pm 0.011$ & $ 1.4\%$ & $1.645\pm 0.003$ & $2.3\%$\\
\hline
%%%%%
\multicolumn{8}{|c|}{Within the region of MC:
$\{1/2\leq \alpha=\beta\}$ } \\
\hline
%%%%%%%%%%%%%%%%%%%%
$0.5$ &  $1.524\pm 0.028 $ &$0.6\%$ & $1.47$ &$ 1.544 \pm 0.010$ & $ 1.1\%$ & $1.564\pm 0.004$ & $2.5\%$\\
\hline
%%%%%%%%%%%%%%%%%%%%%%%%%%%%%%%%%%%%
%%%%%%%%%%%%%%%%%%%%
$0.52$ &  $1.511\pm 0.016 $ &$0.8\%$ & $1.41$ &$ 1.514 \pm 0.003$ & $ 0.9\%$ & $1.540\pm 0.005$ & $2.6\%$\\
\hline
$0.55$ &  $1.468\pm 0.021 $ &$1.6\%$ & $1.54$ &$ 1.478 \pm 0.007$ & $ 1.7\%$ & $1.515\pm 0.005$ & $3.9\%$\\
\hline
%%%%%%%%%%%%%%
$0.58$ &  $1.461\pm 0.024 $ &$0.7\%$ & $1.46$ &$ 1.471 \pm 0.008$ & $ 1.2\%$ & $1.502\pm 0.004$ & $3.9\%$\\
\hline
%%%%%%%%%%%%%%%%%%%%%%%%%%%%%%%%%%%%%
$0.6$ &  $1.455\pm 0.023 $ &$1.0\%$ & $1.44$ &$ 1.465 \pm 0.007$ & $ 1.5\%$ & $1.497\pm 0.003$ & $4.4\%$\\
\hline
%%%%%%%%%%%%%%%%%%
$0.62$ &  $1.472\pm 0.014 $ &$0.9\%$ & $1.28$ &$ 1.471 \pm 0.006$ & $ 0.9\%$ & $1.499\pm 0.003$ & $3.2\%$\\
\hline
%%%%%%%%%%%%%%%%%%%%
$0.65$ &  $1.472\pm 0.012 $ &$0.8\%$ & $1.23$ &$ 1.477 \pm 0.006$ & $ 0.9\%$ & $1.501\pm 0.003$ & $3.2\%$\\
\hline
%%%%%%%%%%%%%%%
$0.7$ &  $1.492\pm 0.020 $ &$0.9\%$ & $1.07$ &$ 1.495 \pm 0.008$ & $ 0.9\%$ & $1.513\pm 0.004$ & $3.2\%$\\
\hline
%%%%%%%%%%%%%%%%%%%
$1.0$ &  $1.556\pm 0.014 $ &$0.7\%$ & $0.73$ &$ 1.563 \pm 0.009$ & $ 1.0\%$ & $1.590\pm 0.003$ & $3.7\%$\\
\hline
%%%%%%%%%%%%%%%%%%%%%%%%%%%
$2.0$ &  $1.582\pm 0.014 $ &$1.0\%$ & $0.63$ &$ 1.592 \pm 0.007$ & $ 1.3\%$ & $1.657\pm 0.003$ & $7.5\%$\\
\hline
%%%
\end{tabular}
 \vspace*{0.3cm}
%%%%%%%%%%%%%%%%%%%%
\caption{\label{table:numbers_diagonal} \TK{
The values  $\gamma_j$, $\sigma_j$, $\Delta_j$ ($j=0,1,2$) and $C_0$ when $  \alpha=\beta$.}}
%%%%%
\end{table}

\vspace{2mm}

{\bf Acknowledgment:} 
%%%%%%%%%%%%%%%%%%%%%%%%%%%%%%%%%%%%%%%
The authors thank  BayFOR, the University of Bayreuth, and Tel Aviv University
for their generous    support to   
Bilateral Workshops  of scholars from 
the University of Bayreuth  and Tel Aviv University.

We thank Melanie Birke and Walter Olbricht for making us aware of the literature on generalized Poisson processes. We also thank Peter Kloeden for discussing the construction of the TASEP-RDS with us.

\LG{Finally, we thank an anonymous reviewer who made us aware of the concepts of graphical representations, grand couplings, and the related literature\TK{, and who suggested to extend our numerical experiments on synchronization times to the $(\alpha, \beta)$-plane.}}


\begin{thebibliography}{10}
\providecommand{\url}[1]{#1}
\csname url@rmstyle\endcsname
\providecommand{\newblock}{\relax}
\providecommand{\bibinfo}[2]{#2}
\providecommand\BIBentrySTDinterwordspacing{\spaceskip=0pt\relax}
\providecommand\BIBentryALTinterwordstretchfactor{4}
\providecommand\BIBentryALTinterwordspacing{\spaceskip=\fontdimen2\font plus
\BIBentryALTinterwordstretchfactor\fontdimen3\font minus
  \fontdimen4\font\relax}
\providecommand\BIBforeignlanguage[2]{{%
\expandafter\ifx\csname l@#1\endcsname\relax
\typeout{** WARNING: IEEEtran.bst: No hyphenation pattern has been}%
\typeout{** loaded for the language `#1'. Using the pattern for}%
\typeout{** the default language instead.}%
\else
\language=\csname l@#1\endcsname
\fi
#2}}

\bibitem{Arno98}
L.~Arnold, \emph{Random Dynamical Systems}.\hskip 1em plus 0.5em minus
  0.4em\relax Springer-Verlag, Heidelberg, 1998.

\bibitem{ArnC91}
L.~Arnold and H.~Crauel, ``Random dynamical systems,'' in \emph{Lyapunov
  Exponents: Proceedings of a Conference held in Oberwolfach, May 28--June 2,
  1990}, ser. Lecture Notes in Mathematics, L.~Arnold, H.~Crauel, and J.-P.
  Eckmann, Eds., vol. 1486.\hskip 1em plus 0.5em minus 0.4em\relax Berlin:
  Springer-Verlag, 1991, pp. 1--22.

\bibitem{EYAL_RFMD1}
E.~Bar-Shalom, A.~Ovseevich, and M.~Margaliot, ``Ribosome flow model with
  different site sizes,'' \emph{{SIAM} J. Applied Dynamical Systems}, vol.~19,
  no.~1, pp. 541--576, 2020, to appear.

\bibitem{BaxendaleStroock1988}
P.~H. Baxendale and D.~W. Stroock, ``Large deviations and stochastic flows of
  diffeomorphisms,'' \emph{Probab. Theory Related Fields}, vol.~80, no.~2, pp.
  169--215, 1988.

\bibitem{solvers_guide}
R.~A. Blythe and M.~R. Evans, ``Nonequilibrium steady states of matrix-product
  form: a solver's guide,'' \emph{J. Phys. A: Math. Gen.}, vol.~40, no.~46, pp.
  R333--R441, 2007.

\bibitem{ChKS02}
D.~N. Cheban, P.~E. Kloeden, and B.~Schmalfu\ss, ``The relationship between
  pullback, forward and global attractors of nonautonomous dynamical systems,''
  \emph{Nonlinear Dyn. Syst. Theory}, vol.~2, no.~2, pp. 125--144, 2002.

\bibitem{CHOWDHURY2005318}
D.~Chowdhury, A.~Schadschneider, and K.~Nishinari, ``Physics of transport and
  traffic phenomena in biology: from molecular motors and cells to organisms,''
  \emph{Physics of Life Reviews}, vol.~2, no.~4, pp. 318--352, 2005.

\bibitem{TASEP_motors}
L.~Ciandrini, M.~C. Romano, and A.~Parmeggiani, ``Stepping and crowding of
  molecular motors: Statistical kinetics from an exclusion process
  perspective,'' \emph{Biophysical J.}, vol. 107, pp. 1176--1184, 2014.

\bibitem{Corwin_2012}
I.~Corwin, ``The {K}ardar-{P}arisi-{Z}hang equation and universality class,''
  \emph{Random Matrices Theory Appl.}, vol.~1, no.~1, p. 1130001, 2012.

\bibitem{CraK15}
H.~Crauel and P.~E. Kloeden, ``Nonautonomous and random attractors,''
  \emph{Jahresber. Dtsch. Math.-Ver.}, vol. 117, no.~3, pp. 173--206, 2015.

\bibitem{Derrida_etal_1993}
B.~Derrida, M.~R. Evans, V.~Hakim, and V.~Pasquier, ``Exact solution of a
  {$1$}{D} asymmetric exclusion model using a matrix formulation,'' \emph{J.
  Phys. A}, vol.~26, no.~7, pp. 1493--1517, 1993.

\bibitem{Gantert_etal_2020}
\BIBentryALTinterwordspacing
N.~Gantert, E.~Nestoridi, and D.~Schmid, ``Mixing times for the simple
  exclusion process with open boundaries,'' 2020. [Online]. Available:
  \url{http://arxiv.org/abs/2003.03781}
\BIBentrySTDinterwordspacing

\bibitem{Harr78}
T.~E. Harris, ``Additive set-valued {M}arkov processes and graphical methods,''
  \emph{Ann. Probability}, vol.~6, no.~3, pp. 355--378, 1978.

\bibitem{Johansson2000}
K.~Johansson, ``Shape fluctuations and random matrices,'' \emph{Comm. Math.
  Phys.}, vol. 209, no.~2, pp. 437--476, 2000.

\bibitem{Kingman}
J.~F.~C. Kingman, \emph{Poisson Processes}, ser. Oxford Studies in
  Probability.\hskip 1em plus 0.5em minus 0.4em\relax The Clarendon Press,
  Oxford University Press, New York, 1993, vol.~3.

\bibitem{klep2004}
V.~A. Kleptsyn and M.~B. Nalskii, ``Contraction of orbits in random dynamical
  systems on the circle,'' \emph{Functional Analysis and Its Applications},
  vol.~38, pp. 267--282, 2004.

\bibitem{Krug1991}
J.~Krug, ``Boundary-induced phase transitions in driven diffusive systems,''
  \emph{Phys. Rev. Lett.}, vol.~67, pp. 1882--1885, 1991.

\bibitem{Krug16}
J.~Krug, ``Nonequilibrium stationary states as products of matrices,'' \emph{J.
  Phys. A: Math. Theor.}, vol.~49, p. 421002, 2016.

\bibitem{LevP17}
D.~A. Levin and Y.~Peres, \emph{Markov Chains and Mixing Times}.\hskip 1em plus
  0.5em minus 0.4em\relax American Mathematical Society, Providence, RI, 2017,
  second edition. With contributions by Elizabeth L. Wilmer. With a chapter on
  ``Coupling from the past'' by James G. Propp and David B. Wilson.

\bibitem{Ligg99}
T.~M. Liggett, \emph{Stochastic Interacting Systems: Contact, Voter and
  Exclusion Processes}, ser. Grundlehren der Mathematischen Wissenschaften
  [Fundamental Principles of Mathematical Sciences].\hskip 1em plus 0.5em minus
  0.4em\relax Springer-Verlag, Berlin, 1999, vol. 324.

\bibitem{MacDonald1968}
C.~T. MacDonald, J.~H. Gibbs, and A.~C. Pipkin, ``Kinetics of biopolymerization
  on nucleic acid templates,'' \emph{Biopolymers}, vol.~6, pp. 1--25, 1968.

\bibitem{RFM_entrain}
M.~Margaliot, E.~D. Sontag, and T.~Tuller, ``Entrainment to periodic initiation
  and transition rates in a computational model for gene translation,''
  \emph{PLoS ONE}, vol.~9, no.~5, p. e96039, 2014.

\bibitem{3gen_cont_automatica}
M.~Margaliot, E.~D. Sontag, and T.~Tuller, ``Contraction after small
  transients,'' \emph{Automatica}, vol.~67, pp. 178--184, 2016.

\bibitem{RFM_stability}
M.~Margaliot and T.~Tuller, ``Stability analysis of the ribosome flow model,''
  \emph{IEEE/ACM Trans. Comput. Biol. Bioinf.}, vol.~9, pp. 1545--1552, 2012.

\bibitem{entrain_master}
\BIBentryALTinterwordspacing
M.~Margaliot, L.~Gr\"une, and T.~Kriecherbauer, ``Entrainment in the master
  equation,'' \emph{Royal Society Open Science}, vol.~5, no.~4, p. 172157,
  2018. [Online]. Available:
  \url{https://royalsocietypublishing.org/doi/abs/10.1098/rsos.172157}
\BIBentrySTDinterwordspacing

\bibitem{nani}
I.~Nanikashvili, Y.~Zarai, A.~Ovseevich, T.~Tuller, and M.~Margaliot,
  ``Networks of ribosome flow models for modeling and analyzing intracellular
  traffic,'' \emph{Sci. Rep.}, vol.~9, no.~1, 2019.

\bibitem{newman_2018}
J.~Newman, ``Necessary and sufficient conditions for stable synchronization in
  random dynamical systems,'' \emph{Ergodic Theory and Dynamical Systems},
  vol.~38, no.~5, pp. 1857--1875, 2018.

\bibitem{slotine_stoc_cont}
Q.-C. Pham, N.~Tabareau, and J.-J. Slotine, ``A contraction theory approach to
  stochastic incremental stability,'' \emph{IEEE Trans.\ Automat.\ Control},
  vol.~54, pp. 816--820, 2009.

\bibitem{RFM_MAX}
\BIBentryALTinterwordspacing
G.~Poker, Y.~Zarai, M.~Margaliot, and T.~Tuller, ``Maximizing protein
  translation rate in the non-homogeneous ribosome flow model: a convex
  optimization approach,'' \emph{J. Royal Society Interface}, vol.~11, no. 100,
  p. 20140713, 2014. [Online]. Available:
  \url{https://royalsocietypublishing.org/doi/abs/10.1098/rsif.2014.0713}
\BIBentrySTDinterwordspacing

\bibitem{Quastel_Spohn2015}
J.~Quastel and H.~Spohn, ``The one-dimensional {KPZ} equation and its
  universality class,'' \emph{J. Stat. Phys.}, vol. 160, no.~4, pp. 965--984,
  2015.

\bibitem{Raveh2016}
A.~Raveh, M.~Margaliot, E.~Sontag, and T.~Tuller, ``A model for competition for
  ribosomes in the cell,'' \emph{J. Royal Society Interface}, vol.~13, no. 116,
  p. 20151062, 2016.

\bibitem{reuveni}
S.~Reuveni, I.~Meilijson, M.~Kupiec, E.~Ruppin, and T.~Tuller, ``Genome-scale
  analysis of translation elongation with a ribosome flow model,'' \emph{PLOS
  Computational Biology}, vol.~7, p. e1002127, 2011.

\bibitem{Ross5911}
J.~L. Ross, ``The impacts of molecular motor traffic jams,'' \emph{Proceedings
  of the National Academy of Sciences}, vol. 109, no.~16, pp. 5911--5912, 2012.

\bibitem{ScCN11}
A.~Schadschneider, D.~Chowdhury, and K.~Nishinari, \emph{Stochastic Transport
  in Complex Systems: From Molecules to Vehicles}.\hskip 1em plus 0.5em minus
  0.4em\relax Elsevier, 2011.

\bibitem{Sche02}
M.~Scheutzow, ``Comparison of various concepts of a random attractor: a case
  study,'' \emph{Arch. Math.}, vol.~78, no.~3, pp. 233--240, 2002.

\bibitem{SIMMS20191679}
C.~L. Simms, L.~L. Yan, J.~K. Qiu, and H.~S. Zaher, ``Ribosome collisions
  result in $+1$ frameshifting in the absence of no-go decay,'' \emph{Cell
  Reports}, vol.~28, no.~7, pp. 1679--1689.e4, 2019.

\bibitem{TraW08}
C.~A. Tracy and H.~Widom, ``Integral formulas for the asymmetric simple
  exclusion process,'' \emph{Comm. Math. Phys.}, vol. 279, no.~3, pp. 815--844,
  2008.

\bibitem{rfm_chap}
Y.~Zarai, M.~Margaliot, and T.~Tuller, ``Modeling and analyzing the flow of
  molecular machines in gene expression,'' in \emph{Systems Biology},
  N.~Rajewsky, S.~Jurga, and J.~Barciszewski, Eds.\hskip 1em plus 0.5em minus
  0.4em\relax Springer, 2018, pp. 275--300.

\bibitem{RFM_EXTEN}
\BIBentryALTinterwordspacing
Y.~Zarai, M.~Margaliot, and T.~Tuller, ``Ribosome flow model with extended
  objects,'' \emph{J. Royal Society Interface}, vol.~14, no. 135, p. 20170128,
  2017. [Online]. Available:
  \url{https://royalsocietypublishing.org/doi/abs/10.1098/rsif.2017.0128}
\BIBentrySTDinterwordspacing

\bibitem{TASEP_tutorial_2011}
R.~Zia, J.~Dong, and B.~Schmittmann, ``Modeling translation in protein
  synthesis with \mbox{TASEP}: A tutorial and recent developments,'' \emph{J.
  Statistical Physics}, vol. 144, pp. 405--428, 2011.

\end{thebibliography}
\end{document}